\begin{document}

\title[Ruling polynomials and augmentations over finite fields]{Ruling polynomials and augmentations over finite fields}

\author{Michael B. Henry}
\address{Siena College, Loudonville, NY 12211}
\email{mbhenry@siena.edu}

\author{Dan Rutherford}
\address{University of Arkansas, Fayetteville, AR 72701}
\email{drruther@uark.edu}

\begin{abstract}
For any Legendrian link, $L$, in $(\R^3, \ker(dz-y\,dx))$ we define invariants, $\mathit{Aug}_m(L,q)$, as normalized counts of augmentations from the Legendrian contact homology DGA of $L$ into a finite field of order $q$ where the parameter $m$ is a divisor of twice the rotation number of $L$.  Generalizing a result from \cite{Ng2006} for the case $q =2$, we show the augmentation numbers, $\mathit{Aug}_m(L,q)$, are determined by specializing the $m$-graded ruling polynomial, $R^m_L(z)$, at $z = q^{1/2}-q^{-1/2}$.   As a corollary, we deduce that the ruling polynomials are determined by the Legendrian contact homology DGA.  
\end{abstract}

\maketitle

% ********************
\section{Introduction}
\mylabel{s:intro}

The main result of this article shows that, for  a Legendrian link $L$ in standard contact $\R^3$,   normalized counts of augmentations of the Legendrian contact homology algebra with values in finite fields can be recovered from specializing the ruling polynomials of $L$. To provide some context, we begin with a brief discussion of these two invariants of Legendrian links.

  The Legendrian contact homology algebra, $(\A,\partial)$, is an invariant of Legendrian submanifolds that is defined in a number of settings including in higher dimensions. The invariant is a differential graded algebra (DGA) whose differential counts certain holomorphic disks in the symplectization of the ambient contact manifold with boundary on the Lagrangian cylinder over the Legendrian submanifold.  In the context of Legendrian links in $\R^3$, the DGA $(\A, \partial)$ is also known as the Chekanov-Eliashberg algebra as Chekanov gave an equivalent (see \cite{Chekanov2002a, Eliashberg, Etnyre2002}) formulation of the DGA from a purely combinatorial perspective, and showed that it could be used to distinguish Legendrian knots that were previously not known to be distinct. 

Aside from serving as an effective invariant of Legendrian links, the Chekanov-Eliashberg algebra is also a natural tool for studying Legendrian knots in the context of symplectic topology.  The algebra has been shown to provide information about standard symplectic objects associated to $L$ such as Lagrangian fillings \cite{Chantraine,Ekholm2012,EkholmHK} and generating families \cite{Fuchs2008,Henry2013}.  In addition,  the Chekanov-Eliashberg algebra is also a fundamental ingredient for the combinatorial computation of symplectic homology of Weinstein $4$-manifolds using results from \cite{Bourgeois2012}. 

A normal ruling is a type of decomposition of the front diagram of a Legendrian knot that was originally defined independently, and with different terminologies, in \cite{Chekanov2005} and \cite{Fuchs2003}.  Chekanov and Pushkar showed that refined counts of normal rulings provide Legendrian knot invariants, and it is convenient to present their invariants by collecting them as the coefficients of Laurent polynomials in $z$, known as the $m$-graded ruling polynomials, $R^m_L(z)$.  Here, $m$ is a common divisor of twice the rotation number of every component of $L$.    Ruling polynomials are fairly natural from the perspective of knot theory as they are characterized by skein relations.  However, their invariance has been somewhat mysterious from the point of view of contact geometry, and generalizations of these invariants to higher dimensions have so far not been forthcoming.  

Connections between normal rulings and the Chekanov-Eliashberg algebra began to appear already in \cite{Fuchs2003} with further developments in  \cite{Fuchs2004, Ng2013, Ng2006, Sabloff2005}.  In particular, it is shown in \cite{Fuchs2003, Fuchs2004, Sabloff2005} that the existence of an $m$-graded normal ruling is equivalent to the existence of an $m$-graded augmentation of $(\A,\partial)$ into $\Z/2$. In general, an augmentation is an algebra homomorphism from $(\A,\partial)$ to $(R, 0)$ that is supported in degrees congruent to $0$ mod $m$ and, in addition, commutes with the differential.  Here, $R$ denotes a ground ring 
%that is often $\Z/2\Z$ in the literature and is 
equipped with $0$ as a differential.  Results of the present article, see Theorem \ref{thm:structure}, show that the equivalence of the existence of normal rulings and augmentations holds when considering augmentations into any field.  Note that recent work of Leverson, \cite{Leverson2014}, independently establishes this result as well as equivalence in the case of augmentations into $\Z$.

In the case when $m=0$ or $m$ is odd, Ng and Sabloff \cite{Ng2006} defined numerical invariants, which we denote here by $\widetilde{\mathit{Aug}}_m(L,\Z/2\Z)$, that are normalized counts of augmentations into $\Z/2\Z$.  In addition, they showed that these augmentation numbers are determined by the ruling polynomials via 
\[
\widetilde{\mathit{Aug}}_m(L,\Z/2\Z) = R^m_L(2^{-1/2}).
\] 
The present article provides a significant extension of this result.

\subsection{Statement of results}  Let $L = \cup_{i=1}^cL_i$ be a Legendrian link with rotation number $r(L)=  \gcd\{r(L_i) \}$.  For any divisor $m$ of $2 r(L)$ (the restriction that $m=0$ or $m$ is odd is now removed) and any finite field, $\F_q$, of order $q$ we associate an $\F_q$-valued $m$-graded augmentation number to $L$ denoted 
\[
\mathit{Aug}_m(L,q).
\]
The augmentation numbers are Legendrian link invariants,  see Theorem \ref{thm:augnumbers}, and they have the following relation with the ruling polynomials of $L$.

\begin{theorem} \label{thm:Main} Let $L \subset \R^3$ be a Legendrian link with a chosen Maslov potential.  For any $m \,|\, 2 r(L)$ and any prime power $q$, we have   
\begin{equation} \label{eq:11}
\mathit{Aug}_m(L, q) = q^{-[d+c]/2}z^{c}R^m_L(z)
\end{equation}
where $z = q^{1/2}-q^{-1/2}$ with $q^{1/2}$ the positive square root; $c$ denotes the number of components of $L$; and  $d$ denotes the maximum degree in $z$ of $R^m_L(z)$.
\end{theorem}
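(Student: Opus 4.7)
The plan is to partition the set of $\F_q$-valued $m$-graded augmentations by their associated normal ruling and then count fiberwise. By the extended ruling-augmentation equivalence of Theorem \ref{thm:structure}, every augmentation $\aug : (\A,\partial) \to \F_q$ determines an $m$-graded normal ruling $\ruling(\aug)$ of the front of $L$, so the set of all such augmentations decomposes as a disjoint union over $m$-graded normal rulings $\ruling$ of the fibers $\{\aug : \ruling(\aug) = \ruling\}$.

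The central combinatorial step is to show that each such fiber has size $(q-1)^{s(\ruling)}\, q^{t(\ruling)}$, where $s(\ruling)$ is the number of switches of $\ruling$ and $t(\ruling)$ is a combinatorially defined count of ``free'' (non-switch) parameters. Placing $L$ in preferred plat position and inductively analyzing the DGA differential handle by handle, I would aim for a normal form in which an augmentation compatible with $\ruling$ is specified by a choice of element of $\F_q^\times$ at each switch together with a free parameter in $\F_q$ at each ``return'' crossing. Equivalently, one may appeal to an MCS-type correspondence adapted to augmentations over $\F_q$, which directly parametrizes the fiber by ``handle-slide data''. Either route should yield the per-ruling count.

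Assembly is then a direct algebraic computation. Factoring $z = q^{-1/2}(q-1)$ gives $z^{c+j(\ruling)} = q^{-(c+j(\ruling))/2}(q-1)^{c+j(\ruling)}$, where $j(\ruling)$ is the exponent of $z$ contributed by $\ruling$ to $R^m_L(z)$. Combinatorial identities matching $s(\ruling)$ with $c + j(\ruling)$ and matching $t(\ruling)$ with $(d-j(\ruling))/2$ (read off from the usual expression of $j(\ruling)$ in terms of switches, right cusps, and link components) would then give, after summing over $\ruling$,
\[
\sum_{\ruling}(q-1)^{s(\ruling)}\, q^{t(\ruling)} \;=\; q^{(d+c)/2}\, z^{c}\, R^m_L(z).
\]
Dividing by the overall normalization $q^{(d+c)/2}$ built into the definition of $\mathit{Aug}_m(L,q)$ (whose invariance is the content of Theorem \ref{thm:augnumbers}) yields \eqref{eq:11}.

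The main obstacle is the per-ruling count. In the $q=2$ case treated by Ng and Sabloff the $(q-1)^{s(\ruling)}$ factor collapses to $1$, so the continuous switch parameters are invisible; over a general $\F_q$ the argument must genuinely detect them. I expect the right tool is a handle-slide or Barannikov-style normal form for the DGA in plat position, or equivalently an MCS correspondence refined to $\F_q$, which simultaneously furnishes the normal form and identifies the switch, return, and cusp contributions needed for the algebraic matching above.
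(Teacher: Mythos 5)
Your overall strategy is the same as the paper's: stratify the set of $\F_q$-valued $m$-graded augmentations by the associated $m$-graded normal ruling, count each stratum, and assemble. You are also right that the per-ruling count is the crux. But the count you propose is wrong as stated, and the "combinatorial identities" you invoke to repair it do not exist. You claim a fiber of size $(q-1)^{s(\rho)}q^{t(\rho)}$ with $s(\rho)$ the number of switches, i.e.\ one \emph{free} element of $\F_q^\times$ per switch. The switch parameters are not free: in the MCS/handleslide normal form, the complex must close up correctly at each right cusp, which imposes one multiplicative constraint per ruling disk (the product of the switch coefficients around the boundary of each disk must equal $-1$, or $-t_k$ at a marked cusp). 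The solution set of these "disk equations" is a torus of dimension $\#\mathrm{switches}-\#\mathrm{right\ cusps}+c$ (the $+c$ coming from the values of the augmentation on the group-ring generators $t_1,\dots,t_c$, which your count omits entirely), and proving this requires a genuine argument — in the paper, a graph-contraction induction (Lemma \ref{lem:contract}, Theorem \ref{thm:diskEq}). Your asserted identity $s(\rho)=c+j(\rho)$ reads $\#\mathrm{switches}=c+\#\mathrm{switches}-\#\mathrm{right\ cusps}$, which fails whenever $c\neq\#\mathrm{right\ cusps}$ (e.g.\ already for the trefoil in plat position). So the exponent of $(q-1)$ in your fiber count is off by $\#\mathrm{right\ cusps}-c$, and no rearrangement of the definition of $j(\rho)$ fixes this; the disk-equation analysis is where the correction comes from.

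There is a second, smaller gap in the assembly. The number of free $\F_q$-parameters in a stratum is the number of $m$-graded returns $r(\rho)$ (plus right cusps when $m=1$), and your matching $t(\rho)=(d-j(\rho))/2$ is equivalent to the statement that $j(\rho)+2r(\rho)$ is independent of $\rho$ (with the additional normalization $r(\rho_0)=0$ for a $j$-maximal ruling, which need not hold). This independence is not "read off from the usual expression of $j(\rho)$"; it is a separate fact (Lemma \ref{lem:returns2}), proved via the invariance of $\#\mathrm{returns}-\#\mathrm{departures}$ across rulings of a fixed front (Proposition \ref{prop:rminusd}). Relatedly, the normalization built into $\mathit{Aug}_m(L,q)$ is $q^{-\dim_\cc V_m(D,\cc)}$, not $q^{-(d+c)/2}$; identifying $\dim_\cc V_m(D,\cc)$ with $d+c+r(\rho_0)$ uses the stratification, and converting $q^{-(d+c+r(\rho_0))}$ into the stated $q^{-(d+c)/2}$ again requires Lemma \ref{lem:returns2}. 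With the corrected fiber count $(q-1)^{j(\rho)+c}q^{r(\rho)}$ and that lemma in hand, your final algebraic manipulation (factoring $q-1=q^{1/2}z$) goes through exactly as in the paper.
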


As a corollary, we can see that the Chekanov-Eliashberg algebra determines the ruling polynomials, a result that has been anticipated in the literature at least going back to \cite{Ng2003}.  For this purpose, view the augmentation numbers collectively as a function from the set of prime powers to $\R$.

\begin{corollary}  The augmentation function $\mathit{Aug}_m(L,\cdot)$ and the $m$-graded ruling polynomial $R^m_L(z)$ are equivalent invariants of Legendrian links with a given number of components. 
\end{corollary}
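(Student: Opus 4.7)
The plan is as follows. The forward direction---that $R^m_L(z)$ determines $\mathit{Aug}_m(L,\cdot)$---is immediate from Theorem~\ref{thm:Main}, which expresses $\mathit{Aug}_m(L,q)$ as an explicit function of $R^m_L(z)$, $c$, and $q$. The task therefore reduces to inverting the construction, i.e., showing that the map $R^m_L \mapsto \mathit{Aug}_m(L,\cdot)$ is injective on Legendrian links with a fixed number $c$ of components.

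To set this up, suppose $L$ and $L'$ each have $c$ components and give rise to the same augmentation function, and let $d, d'$ denote the maximum $z$-degrees of $R^m_L$ and $R^m_{L'}$. Setting $t = q^{1/2}$ in the formula of Theorem~\ref{thm:Main} and rearranging yields
\[
R^m_L(t - t^{-1}) = t^{d - d'} R^m_{L'}(t - t^{-1})
\]
at each $t = q^{1/2}$ with $q$ a prime power. Since both sides are rational functions in $t$ agreeing at infinitely many values, the identity holds as an equality of rational functions.

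The main obstacle is that the exponent $d - d'$ is itself a feature of the unknown ruling polynomials and cannot be read off directly from the augmentation function. I plan to handle this with a symmetry argument: the involution $t \mapsto -t^{-1}$ fixes $z = t - t^{-1}$, so $R^m_L(t - t^{-1})$ and $R^m_{L'}(t - t^{-1})$ are each invariant under this substitution. Applying the involution to the displayed identity gives $R^m_L(t - t^{-1}) = (-1)^{d - d'} t^{d' - d} R^m_{L'}(t - t^{-1})$; comparing with the original (and assuming $R^m_{L'} \not\equiv 0$, else both polynomials vanish trivially) forces $t^{2(d - d')} = (-1)^{d - d'}$, an equality of rational functions possible only if $d = d'$.

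With $d = d'$ in hand, the identity collapses to $R^m_L(t - t^{-1}) = R^m_{L'}(t - t^{-1})$ as rational functions in $t$. Since $z = t - t^{-1}$ attains infinitely many values, the Laurent polynomial difference $R^m_L(z) - R^m_{L'}(z)$ has infinitely many zeros and therefore vanishes identically, completing the argument.
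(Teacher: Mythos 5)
Your proposal is correct, and the forward direction is handled exactly as in the paper. For the converse, the overall strategy is the same (promote the pointwise agreement at prime powers to an identity of rational functions in $t=q^{1/2}$, then pin down the degree shift $d-d'$), but the mechanism you use for the last step is genuinely different. The paper invokes Remark \ref{rem:RP} to know that $z^{c}R^m_L(z)$ is an honest polynomial in $z$, so that $f_L(q^{1/2})=q^{-[d+c]/2}z^{c}R^m_L(z)$ is a Laurent polynomial in $q^{1/2}$, and then reads $d$ off directly from the augmentation function as $\mbox{max-deg}_q f_L - \mbox{min-deg}_q f_L - c$; this is constructive in the sense that $d$ is recovered explicitly from the data. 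You instead cancel $z^{c}$ and exploit the involution $t\mapsto -t^{-1}$, which fixes $z=t-t^{-1}$ and hence both $R^m_L(t-t^{-1})$ and $R^m_{L'}(t-t^{-1})$, to force $t^{2(d-d')}=(-1)^{d-d'}$ and hence $d=d'$. Your route avoids any appeal to Remark \ref{rem:RP} (you only need $R^m_L$ to be a Laurent polynomial in $z$, so that the substitution yields a rational function of $t$), at the cost of a slightly less direct argument; your parenthetical handling of the degenerate case $R^m_{L'}\equiv 0$ (where $d'$ is undefined) is terse but fine, since a vanishing augmentation function forces both ruling polynomials to vanish. Both arguments are complete and correct.
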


\begin{proof}
Suppose $K$ and $L$ are Legendrian links with $c(K)$ and $c(L)$ components, respectively, and suppose $c(K) = c(L)$ holds.  Theorem \ref{thm:Main} shows that if $R^m_K(z) = R^m_L(z)$, then the augmentation functions must agree as well.  For the converse, note that $z^{c(L)}R^m_L(z)$ is a polynomial in $z$ (see Remark \ref{rem:RP} below), so $f_L(q^{1/2}) := q^{-[d(L)+c(L)]/2}z^{c(L)}R^m_L(z)$ is a Laurent polynomial in $q^{1/2}$, where $z = q^{1/2}-q^{-1/2}$.  Hence, if $K$ and $L$ have identical augmentation functions, then $f_K(q^{1/2})$ and $f_L(q^{1/2})$ agree at infinitely many points and are therefore equal.  Note that $d(L) = \deg_z( z^{c(L)}R^m_L(z)) - c(L) = \mbox{max-deg}_q f_L(q^{1/2}) - \mbox{min-deg}_q f_L(q^{1/2}) -c(L)$, so it follows that $d(K) = d(L)$.  We can therefore conclude that $R^m_K(z) = R^m_L(z)$.
\end{proof}

Theorem \ref{thm:Main} gives an alternate explanation for the Legendrian isotopy invariance of the ruling polynomials via the invariance of the augmentation function.  The definition of $\mathit{Aug}_m(L,\cdot)$ extends in a straightforward manner to Legendrians in higher dimensional $1$-jet spaces such as $\R^{2n+1}$, and it is interesting to ask if this function may be determined by a polynomial in $z = q^{1/2}-q^{-1/2}$ in general.

\begin{remark}
The cases $m = 1$ and $m=2$ are of some special interest due to a connection between the ruling polynomials and the Kauffman and HOMFLY-PT polynomials, which are invariants of smooth knots that are Laurent polynomials in $a$ and $z$.  Using conventions from \cite{Rutherford2006}, $R^2_L(z)$ (resp. $R^1_L(z)$) is equal to the coefficient of $a^{-tb(L)-1}$ in the HOMFLY-PT polynomial (resp. Kauffman polynomial) where $tb(L)$ denotes the Thurston-Bennequin number.  Note that the same substitution $z = q^{1/2}- q^{-1/2}$ is also commonly applied to these knot polynomials, for instance, in recovering the quantum $\mathfrak{sl}_n$ invariants from the HOMFLY-PT polynomial.    
\end{remark}

%%-----------------------------------------------------------
\subsection{Outline of the article} 
%%-----------------------------------------------------------
In Section 2 we recall some background in connection with normal rulings and the Chekanov-Eliashberg algebra, and in particular give our sign conventions for defining the algebra over $\Z$.   Section 3 contains a definition of augmentation numbers where we use the dimension of an augmentation variety over $\C$ to provide a normalizing factor.  In the remainder of the section a proof of Theorem \ref{thm:Main} is presented assuming a result, Theorem \ref{thm:structure}, which may be of some independent interest as it concerns the structure of the augmentation varieties over an arbitrary field.  

The remainder of the paper focuses on establishing Theorem \ref{thm:structure}.  Throughout, we make use of Morse complex sequences (abbr. MCSs) which were defined over $\Z/2\Z$ in \cite{Henry2011,Pushkar'a}.  Our approach is strongly motivated by two standard forms for Morse complex sequences, the SR-form and the A-form, that were considered over $\Z/2\Z$ in \cite{Henry2011} and are related to normal rulings and augmentations respectively.  Using SR-form MCSs we are able to realize the part of the augmentation variety that corresponds to a particular normal ruling as the solution set to a system of equations which is easier to analyze than the system of equations characterizing the full variety.  The paper concludes with the construction of bijections between A-form MCSs and augmentations in Section 5 and between SR-form and A-form MCSs in Section 6.
 
%%-----------------------------------------------------------
\subsection{Acknowledgments}
%%-----------------------------------------------------------
Our collaboration has been stimulated by our participation in a SQuaRE research group at the American Institute of Mathematics.  We thank AIM and also the other SQuaRE members Dmitry Fuchs, Paul Melvin, Josh Sabloff, and Lisa Traynor.  We also thank Mark Johnson, Yo'av Rieck, Michael Sullivan, and Jeremy Van Horn-Morris for useful conversations.

% ********************
\section{Background}
\mylabel{s:Background}

We will use a few basic concepts from algebraic geometry as follows.  Let $\F$ be a field, and denote $\F^\times = \F \setminus \{0\}$.  Using coordinates $(x_1, \ldots, x_m, y_1, \ldots, y_n)$ on $(\F^\times)^m \times \F^n$, an \textbf{affine algebraic set} or \textbf{affine variety} in $(\F^\times)^m \times \F^n$ is the common zero locus of some collection of Laurent polynomials in $\F[x_1, x_1^{-1}, \ldots, x_m, x_m^{-1}, y_1, \ldots, y_n]$; negative powers are allowed for the $x_i$ but not for the $y_i$.  A {\bf regular map} from an affine variety $V \subset (\F^\times)^{m_1} \times \F^{n_1}$ to an affine variety $W\subset (\F^\times)^{m_2} \times \F^{n_2}$ is a function from $V$ to $W$ that is the restriction of a polynomial map $f :(\F^\times)^{m_1} \times \F^{n_1} \rightarrow \F^{m_2} \times \F^{n_2}$ that may include negative powers of the $x_i$ coordinates.  Affine varieties $V$ and $W$ are isomorphic, written $V \cong W$, if there is a regular bijection between them whose inverse is also regular.
Finally, we need that an affine variety over $\C$ has a {\bf dimension}, and in the case that $V$ is non-singular and irreducible this agrees with the dimension of $V$ as a complex manifold.  

In addition, we assume some familiarity with Legendrian knots in $\R^3$ with its standard contact structure $\xi= \text{ker}(dz-ydx)$.  In particular, the reader should be familiar with front ($xz$) and Lagrangian ($xy$) projections as well as with the rotation number, cf. \cite{Etnyre2005}.  For a multi-component Legendrian link $L= \cup_{i=1}^c L_i$ we use the convention that $r(L) = \gcd \{r(L_i) \,|\, 1 \leq i \leq c\}$.  
We say that a front diagram is \textbf{plat} if every left cusp has the same $x$-coordinate, every right cusp has the same $x$-coordinate, and all self-intersections are transverse double-points. A \textbf{nearly plat} front diagram is the result of perturbing a plat front diagram slightly so that each cusp and crossing has a distinct $x$-coordinate.  An arbitrary Legendrian link is Legendrian isotopic to a link with nearly plat front diagram, and we will often make this assumption on front diagrams to simplify proofs. In a small neighborhood $U \subset \R^2$ of a cusp, the \textbf{upper} (resp. \textbf{lower}) strand of the cusp is the connected component of $U \cap ( L \setminus \{ \mbox{cusp points} \})$ with larger (resp. smaller) $z$-coordinate. A \textbf{Maslov potential} for $L$ is a locally constant map $\Maslov : L \setminus \{ \mbox{cusp points} \} \to \Z/ 2r(L) \Z$ satisfying 
\[
\Maslov( \mbox{upper strand} ) = \Maslov( \mbox{lower strand} ) + 1 \mod{2 r(L)}
\]
 near a cusp point of the front diagram.

%%-----------------------------------------------------------
\subsection{$m$-Graded Normal Rulings}
\mylabel{ss:NormalRulings}
%%-----------------------------------------------------------

Suppose $\front$ is the front diagram of a Legendrian link, and that all crossings and cusps of $\front$ have distinct $x$-coordinates. Label the $x$-coordinates of the crossings and cusps $x_0 < x_1 < \hdots < x_n$.  For each $1 \leq i \leq n$, label the strands of the tangle $D \cap \{ (x,z) \in \R^2 : x_{i-1} < x < x_{i} \}$ from \emph{top to bottom} $1, 2, \hdots, s_i$.

\begin{definition} \label{def:NR}
 A \emph{normal ruling} of $D$ is an $n$-tuple $\ruling = ( \ruling_1, \ruling_2, \hdots, \ruling_n)$ satisfying:
\begin{enumerate}
	\item  For each $1 \leq i \leq n$, $\ruling_i$ is a fixed-point free involution of $\{1, 2, \hdots, s_i\}$.
	\item If $x_i$ is the $x$-coordinate of a left cusp between strands $k$ and $k+1$, then $\ruling_{i+1}(k) = k+1$ and the restriction of $\ruling_{i+1}$ to the remaining strands agrees with $\ruling_{i}$ when we identify \\ $\{1, \ldots, s_{i+1}\}\setminus\{k,k+1\}$ with $\{1, \ldots, s_{i}\}$ in the obvious way. 

An analogous requirement is imposed at  right cusps.
	\item Suppose $x_i$ is the $x$-coordinate of a crossing between strands $k$ and $k+1$. Then $\ruling_i(k) \neq k+1$ and either:
	
\begin{enumerate}
	\item $$ \ruling_{i+1} = (k \hspace{2mm} k+1) \circ \ruling_{i} \circ (k \hspace{2mm} k+1)$$ where $(k \hspace{2mm} k+1)$ denotes the transposition; or  
	\item $$\ruling_i = \ruling_{i+1}.$$ In this case, we also require $\ruling_i$ satisfies one of the following:
\begin{enumerate}
	\item $\ruling_i(k) < k < k+1 < \ruling_i(k+1)$;
	\item $k < k+1 < \ruling_i(k+1) < \ruling_i(k)$; or 
	\item $\ruling_i(k+1) < \ruling_i(k) < k < k+1$.
\end{enumerate}
		
\end{enumerate}
	 
\end{enumerate}

A crossing with $x$-coordinate $x_i$ is called a \textbf{switch} if $\ruling$ satisfies (3b) at $x_i$, and the requirement at switches specified by (3b) (i)-(iii) is known as 
the \textbf{normality condition}.   A crossing that is not a switch is called a \textbf{departure} (resp. a \textbf{return}) if the normality condition is satisfied before (resp. after) the crossing.
\end{definition}

Suppose $L$ is equipped with a Maslov potential, $\mu$.  Then, each crossing $q$ of $D$ is assigned the {\bf degree}, $|q| \in \Z/2r(L)\Z$, defined as the difference of the Maslov potential on the overstrand and understrand of the crossing.

\begin{definition} \label{def:MP}
Let $m \geq 0$ be a divisor of $2 r(L)$.  A normal ruling $\rho$ of $L$ is \textbf{$m$-graded} if all switches of $\rho$ have degree congruent to $0$ mod $m$.  Returns (resp. departures) of $\rho$ with degree $0$ mod $m$ are referred to as \textbf{$m$-graded returns} (resp. \textbf{$m$-graded departures}).
\end{definition}

\begin{figure}[t]
\centering
\includegraphics[scale=.65]{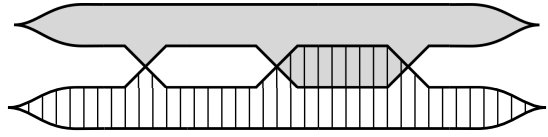}
\caption{A presentation of a normal ruling in terms of disks.}
\label{fig:trefoilruling}
\end{figure}

\begin{figure}[t]
\labellist
\small\hair 2pt
\pinlabel {(S1)} [tl] at 68 110
\pinlabel {(S2)} [tl] at 259 110
\pinlabel {(S3)} [tl] at 453 110
\pinlabel {(R1)} [tl] at 68 0
\pinlabel {(R2)} [tl] at 259 0
\pinlabel {(R3)} [tl] at 453 0
\endlabellist
\centering
\includegraphics[scale=.65]{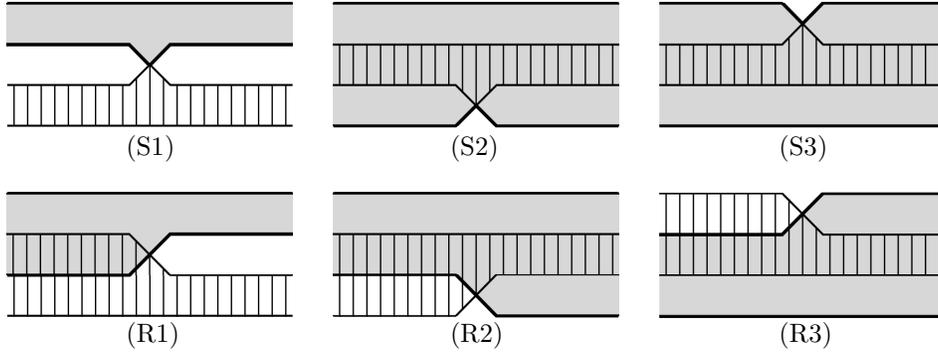}
\caption{The behavior of ruling disks at a switch (top row) or a return (bottom row). The three types of departures can be seen by reflecting each of (R1)-(R3) across a vertical axis.}
\label{fig:returns}
\end{figure}

\subsubsection{Normal rulings as decompositions of $\front$.} \label{sec:disks} Since the involutions $\rho_i$ are fixed point free, they divide the strands of $\front$ into pairs in vertical strips of the $xz$-plane that do not contain crossings or cusps.  Thus in each such strip, the ruling assigns to each strand a corresponding {\bf companion strand} of the ruling.  The definition allows us to extend this local pairing by covering the entire front diagram with pairs of continuous paths with monotonic $x$-coordinates.  Each pair of paths meet only at shared cusp endpoints; paths that belong to distinct pairs can meet only at crossings where they either cross each other transversely (as in (3a)) or both turn a corner if the crossing is a switch.  Topologically, each pair of paths bounds a disk in the $xz$-plane, and in figures it is convenient to present normal rulings by shading these disks; see Figure \ref{fig:trefoilruling} for an example.  

The normality condition allows us to divide switches into three distinct types, (S1)-(S3) as indicated in Figure \ref{fig:returns}.  For values of $x$ near a switch the two disks that meet at the switch are either disjoint as in Type (S1) or one is nested inside the other as in Types (S2) and (S3).   At a departure (resp. return) the configuration of the corresponding disks changes from nested or disjoint to interlaced (resp. interlaced to nested or disjoint).  The returns can as well be divided into three types, (R1)-(R3), also indicated in Figure \ref{fig:returns}.

\begin{definition}
The set of $m$-graded normal rulings of a Legendrian link $\Leg$ with Maslov potential $\Maslov$ is denoted $\mathcal{R}^m(\Leg, \Maslov)$. The \textbf{$m$-graded ruling polynomial} of $\Leg$ with Maslov potential $\Maslov$ is $$R_{\Leg, \Maslov}^m (z) = \sum_{\ruling \in \mathcal{R}^m(\Leg, \Maslov)} z^{j(\ruling)},$$ where $j(\ruling):=\#(\mbox{switches}) - \#(\mbox{right cusps})$.
\end{definition}

It is shown in \cite{Chekanov2005} that the $m$-graded ruling polynomials are Legendrian isotopy invariants.  Note that for multi-component links they may depend on the choice of Maslov potential $\mu$.  The grading of the Chekanov-Eliashberg algebra depends as well on a choice of Maslov potential, and in Theorem \ref{thm:Main} it should be understood that the same $\mu$ is used in both contexts.

\begin{remark}  \label{rem:RP}
Although $R_{\Leg,\Maslov}^m (z)$ may contain negative powers of $z$, the product $z^c \cdot  R_{\Leg,\Maslov}^m (z)$ does not where $c$ is the number of components of $L$.  To see this, for a given normal ruling $\rho$, resolve all switches to parallel, horizontal lines in the front projection.  The result is a link of Legendrian unknots with one component for each disk of the ruling.  The addition of each switch can only decrease the number of components by $1$, and this observation gives the inequality $c \geq -j(\rho)$.   
\end{remark}

%%-----------------------------------------------------------
\subsection{The Chekanov-Eliashberg algebra over $\Z[H_1(L)]$}
\mylabel{ss:CE-DGA}
%%-----------------------------------------------------------

We now recall a definition of the Chekanov-Eliashberg algebra with coefficients in the group ring $\Z[H_1(L)]$.  The sign conventions used here follow  \cite{Ng2010}.  

Let $L$ be an oriented Legendrian link with $c$ components.  Choosing an ordering of the components, $L = L_1 \sqcup \cdots \sqcup L_c$, provides an isomorphism $\Z[H_1(L)] \cong \Z[t_1, t_1^{-1}, \ldots, t_c, t_c^{-1}]$.  In addition, choose a marked point on each component, $*_i \in L_i$ for $1 \leq i \leq c$.     

It is most natural to use the Lagrangian projection (projection to the $xy$-plane) of $L$ when defining the Chekanov-Eliashberg algebra.  In this article, we will make use of Ng's  resolution construction  that produces a Lagrangian projection of a link Legendrian isotopic to $L$.  This Lagrangian projection is obtained from the front diagram $D$ by placing the strand with lesser slope on top at crossings; smoothing the cusps of $D$; and then adding an extra negative half twist near each right cusp:
\[
\includegraphics[scale=.4]{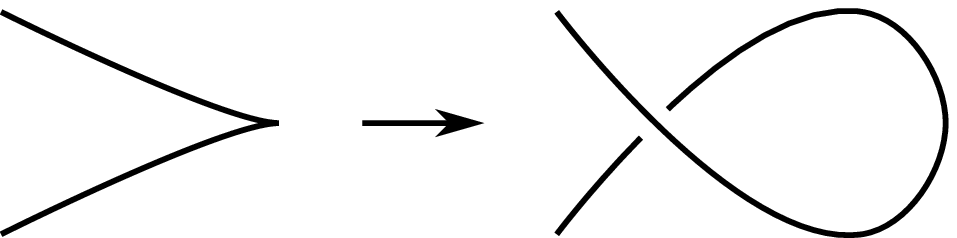}.
\]
See \cite{Ng2003} for details of the resolution construction.
We will use the notation $(\A(D), \partial)$ for the Chekanov-Eliashberg DGA associated to the resolution of the front diagram $D$.  This is a $\Z/2r(L)\Z$-graded algebra with differential $\partial$.

As an algebra, $\A(D)$ is the free associative (non-commutative) $\Z[t_1, t_1^{-1}, \ldots, t_c, t_c^{-1}]$ unital algebra generated by the crossings of the Lagrangian projection associated with $D$. These crossings are in correspondence with the crossings of $D$ itself and the right cusps of $D$, and we label them as $q_1, \ldots, q_N$.  At each crossing we associate Reeb signs to each of the quadrants of the crossing as in Figure \ref{fig:Reeb}.  In addition, each quadrant is given an orientation sign, so that the two quadrants which sit to the right of the understrand (with respect to its orientation) both have negative orientation signs and the other two quadrants have positive orientation signs; see Figure \ref{fig:Reeb}.

\begin{figure}[t]
\labellist
\small\hair 2pt
\pinlabel {$-$} [b] at 40 34
\pinlabel {$-$} [t] at 40 16
\pinlabel {$+$} [l] at 54 25
\pinlabel {$+$} [r] at 26 25
\endlabellist
\centering
\includegraphics[scale=.65]{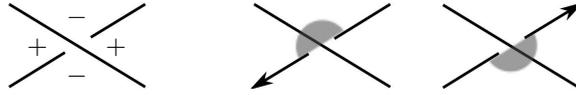}
\caption{Reeb signs (left) and orientation signs (right) at crossings of a Lagrangian projection.  The shaded quadrants indicate negative orientation signs.}
\label{fig:Reeb}
\end{figure}

A $\Z/2r(L)\Z$-grading may be defined on $\A(D)$ as follows. Choose a Maslov potential for $L$.  Then, a generator $q$ corresponding to a crossing of $D$ is assigned the same degree $|q| \in \Z/2r(L)\Z$ as in the discussion above Definition \ref{def:MP}.  All generators corresponding to right cusps have degree $1$, and the group ring generators have $|t_i| =0$.  The grading is extended to $\A(D)$ so that $|a\cdot b| = |a| + |b|$ when $a$ and $b$ are homogeneous.  We remark that for a $1$-component link the grading is independent of the choice of Maslov potential, but in the multi-component case different choices of $\mu$ can produce different gradings on $\A(D)$. 

Next, we define the differential, $\partial$.  For each $\ell \geq 0$, fix $\ell+1$ points, $z_0, z_1, \ldots, z_\ell$ appearing in counterclockwise order along the boundary of the unit disk $D^2 \subset \R^2$ and let $D^2_\ell = D^2 \setminus \{z_0, z_1,\ldots, z_\ell \}$.  Given generators $a, b_1, \ldots, b_n$ of $\A(D)$ we consider orientation preserving immersions of $D^2_\ell$ into the plane such that the boundary of $D^2_\ell$ maps to the Lagrangian projection of $L$ and a suitably small neighborhood of $z_0$ (resp. $z_i$ with $1 \leq i \leq \ell$) maps to a single  quadrant of $a$ (resp. $b_i$) with positive (resp. negative) Reeb sign.  
We let $\Delta(a, b_1, \ldots, b_n)$ denote the set of all such immersions modulo orientation preserving reparametrization.  To each disk $f \in \Delta(a, b_1, \ldots, b_n)$ we associate the word
\begin{equation} \label{eq:word}
w(f) = \epsilon(f) t_1^{n_1(f)}\cdots t_c^{n_c(f)} b_1 \cdots b_n 
\end{equation}
where $n_i(f)$ denotes the number of times the boundary of $D^2_\ell$ intersects the base point, $*_i$, counted with sign, i.e. the algebraic intersection number of $\partial f$ and $*_i$ with respect to the orientation of  $L_i$.
Moreover, $\epsilon(f)$ is a sign defined by 
\[
\epsilon(f) = \epsilon' \epsilon_0 \epsilon_1 \cdots \epsilon_n
\] 
where $\epsilon_i$ is the orientation sign of the $i$-th corner of $f$ and $\epsilon'$ is $+1$ if the orientation of the initial arc of $f$ (coming out of $a$) agrees with the orientation of $L$ and $-1$ otherwise.

The differential $\partial: \A(D) \rightarrow \A(D)$ is then defined on generators by
\[
\partial a = \sum_{n \geq 0} \sum_{b_1, \ldots, b_n} \sum_{f \in \Delta(a, b_1, \ldots, b_n)}  w(f)
\]
and extended to all of $\A(D)$ according to the Liebniz rule $\partial( xy) = (\partial x) y + (-1)^{|x|} x (\partial y)$.

\begin{theorem}[\cite{Chekanov2002a, Etnyre2002}] \label{thm:CEDGA}  The stable tame isomorphism type of the differential graded algebra, $(\A(D), \partial)$ is a Legendrian isotopy invariant of $L$.
\end{theorem}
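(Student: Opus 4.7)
The plan is to verify invariance under a finite generating set of local moves for Legendrian isotopy. First, invoke the standard fact that two Legendrian links with generic front diagrams are Legendrian isotopic if and only if their front diagrams are related by a sequence of planar isotopies (preserving front genericity) and three types of Legendrian Reidemeister moves: (I) the triple-point move where three strands cross pairwise; (II) a crossing passing through a cusp; and (III) the birth or death of a pair of adjacent cusps joined by a single strand. Applying Ng's resolution construction turns each such move into a local modification of the Lagrangian projection, and hence into a local modification of the generating set and differential of $\A(D)$.

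For moves (I) and (II), the generating set of $\A(D)$ is unchanged, and the plan is to construct an explicit tame automorphism $\phi$ of $\A(D)$ such that the new differential $\partial'$ equals $\phi\circ\partial\circ\phi^{-1}$. The automorphism $\phi$ is elementary, sending one generator $q$ to $q + u$ for a word $u$ in the remaining generators, where $u$ records the new immersed disks introduced by the move. For move (III) the generating set acquires (or loses) two new generators: a new right-cusp generator $a$ and a nearby new crossing $b$, with $\partial a = b + (\text{words in the old generators})$. A further elementary automorphism then puts the DGA in the algebraic stabilization form $\partial a = b$, so that invariance is realized via a composition of a stabilization with a tame isomorphism.

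The disk bookkeeping needed to write down $\phi$ explicitly can be done by a finite case analysis at the local region of the move: one enumerates the immersed disks with boundary on the pre-move and post-move Lagrangian projections, uses the fact that the only disks whose boundaries enter the local region are the ones we can see in a neighborhood of it, and compares their contributions to $\partial$. Each case produces a concrete formula for $u$, and hence for $\phi$, on the affected generator.

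The main obstacle, as indicated by the cited conventions of \cite{Ng2010}, is verifying that the proposed tame isomorphisms are compatible with the Reeb and orientation signs of Figure \ref{fig:Reeb} and with the $H_1$-coefficients $t_1,\dots,t_c$. For each move one must check case-by-case, according to the orientations of the participating strands and the placement of the base points $*_i$ relative to the local region, that the signs $\epsilon(f)$ and monomials $t_1^{n_1(f)}\cdots t_c^{n_c(f)}$ appearing in the new disks match those predicted by $\phi\circ\partial\circ\phi^{-1}$. Once these cases are confirmed, concatenating the resulting tame isomorphisms and stabilizations over an arbitrary sequence of Reidemeister moves proves that $(\A(D),\partial)$ is well defined up to stable tame isomorphism.
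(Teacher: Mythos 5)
A point of orientation first: the paper does not prove Theorem \ref{thm:CEDGA} at all. It is imported from \cite{Chekanov2002a} and \cite{Etnyre2002} and used as a black box (its only role in this article is in the proof of Theorem \ref{thm:augnumbers}), so the relevant comparison is with the proofs in those references. Your outline does reproduce their strategy: reduce Legendrian isotopy to a finite list of local moves, realize the moves that preserve the generating set by tame isomorphisms built from elementary automorphisms $q \mapsto q + u$, and realize the moves that create a canceling pair of generators as algebraic stabilizations composed with tame isomorphisms. (The cited proofs actually work directly with the Lagrangian-projection Reidemeister moves rather than pushing front moves through the resolution, but that difference is cosmetic.)

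As a proof, however, the proposal has concrete gaps. First, your claim that the generating set of $\A(D)$ is unchanged under move (II) is false: when a branch passes through a cusp, the number of crossings of the (resolved) diagram changes by two, so this move also forces a stabilization and cannot be absorbed into a tame isomorphism; only the triple-point move leaves the generator set fixed. Second, the assertion that ``the only disks whose boundaries enter the local region are the ones we can see in a neighborhood of it'' is not true as stated: a rigid disk contributing to $\partial$ can run through the local region of the move without having any corners there, and the substance of Chekanov's argument is precisely the construction of a bijection between the disks before and after the move together with the identification of the finitely many exceptional disks supported inside the region. That bijection, and the verification that it is compatible with the signs $\epsilon(f)$ and the monomials $t_1^{n_1(f)}\cdots t_c^{n_c(f)}$ (the content of \cite{Etnyre2002} beyond Chekanov's mod $2$ argument), is where essentially all of the work lies, and the proposal defers it entirely to an unexecuted case analysis. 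So what you have is a correct road map for the known proof, with one wrong signpost, rather than a proof.
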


This invariance statement requires some explanation. An {\bf algebra stabilization} of $(\A(D), \partial)$ is a DGA of the form $(S\A(D), \partial')$ where $S\A(D)$ denotes an algebra obtained from $\A(D)$ by adding two new generators $e$ and $f$ with $|e| = |f|+1$; the differential satisfies 
$\partial' e =f$ and $\partial'|_{\A(D)} = \partial$.  
Theorem \ref{thm:CEDGA} states that if $D_1$ and $D_2$ are front diagrams representing Legendrian isotopic knots, then the DGA's $(\A(D_1), \partial_1)$ and $(\A(D_2), \partial_2)$ will become isomorphic after possibly stabilizing each algebra some number of times.  
Moreover, the isomorphism may be assumed to be of a particular type known as a tame isomorphism, but this will not be important for our purposes. 
In the multi-component case, the grading depends on a choice of Maslov potential.  Once we choose a Maslov potential for $D_1$ there is a corresponding Maslov potential for $D_2$ with respect to which stable isomorphism holds.  

\begin{remark}
The Chekanov-Eliashberg algebra was originally defined over $\Z/ 2 \Z$ in \cite{Chekanov2002a}.  A generalization to $\Z[H_1(L)]$ coefficients was first established in \cite{Etnyre2002}.  Distinct (but equivalent) conventions for defining the Chekanov-Eliashberg over $\Z[H_1(L)]$ have appeared elsewhere in the literature with our conventions related to those of \cite{Ng2010} as follows.  The article \cite{Ng2010} restricts attention to one component links.  There, a more general invariant of Legendrian knots is constructed, the LSFT algebra, which is an algebra $\hat{\mathcal{A}}$ with filtration $\mathcal{F}^0\hat{\mathcal{A}} \supset  \mathcal{F}^1\hat{\mathcal{A}} \supset \cdots$ by ideals.  The quotient $\A = \mathcal{F}^0\hat{\mathcal{A}}/\mathcal{F}^1\hat{\mathcal{A}}$ is a $\Z$-graded algebra, and the construction results in a differential $\partial$ defined on $\A$.  After taking a further quotient to allow the group ring generator $t$ to commute with the other generators and  reducing the grading mod $2r(L)$, this is precisely the algebra $(\A(D), \partial)$ that we have defined above.  Appendix A of \cite{Ng2010} shows that $(\A(D), \partial)$ is equivalent to other versions of the Chekanov-Eliashberg algebra arising from alternate sign conventions.  Finally, note that \cite{Ng2003} contains a discussion of the multi-component case. 
\end{remark}

% ********************
\section{Augmentation numbers}
\mylabel{s:Outline}

Using Theorem \ref{thm:CEDGA}, many effective Legendrian knot invariants can be extracted from the Chekanov-Eliashberg algebra.  In this section, we define augmentation numbers which arise from counting homomorphisms from $(\A(D), \partial)$ into a finite field, and we outline the proof of Theorem~\ref{thm:Main}.

\begin{definition}  
\mylabel{defn:aug}
Let $\F$ be a field and $D$ be a front diagram for a Legendrian link $L$ with chosen Maslov potential.  An {\bf $\F$-valued augmentation} is a ring homomorphism $\varepsilon : (\A(D), \partial) \rightarrow (\F, 0)$ satisfying $\varepsilon(1) = 1$ and $\varepsilon \circ \partial = 0$.  In addition, for a given divisor $m \, | \,2 r(L)$, we say $\varepsilon$ is {\bf $m$-graded} if we have $|x| = 0$ mod $m$ for any generators such that $\varepsilon(x) \neq 0$. 
Moreover, we use the notation $\overline{\mathit{Aug}}_m(D; \F)$ to denote the set of all $m$-graded, $\F$-valued augmentations.
\end{definition}

After ordering the generators $t_1, \ldots, t_c$ of the coefficient ring and the generators, $q_1, \ldots, q_N$, of the Chekanov-Eliashberg algebra 
there is an injective map
\[
\Omega: \overline{ \mathit{Aug}}_m(D, \F) \hookrightarrow (\F^\times)^c \times \F^N,  \quad \varepsilon \mapsto (\varepsilon(t_1), \ldots, \varepsilon(t_c), \varepsilon(q_1), \ldots, \varepsilon(q_N)).
\]
We denote the image of $\Omega$ by $V_m(D,\F)$ and refer to it as the {\bf augmentation variety} of $(\A(D), \partial)$ over $\F$.  Notice that $V_m(D,\F)$ is an affine algebraic set since it is the common zero locus of the collection of polynomials $\{\partial q_1, \ldots, \partial q_N\} \cup \{ q_i \, :\,  |q_i| \neq 0 \mod m \} \subset \F[t^{\pm1}_1, \ldots, t^{\pm1}_c, q_1, \ldots, q_N]$ .

When $\F_q$ is a finite field, of order $q$, we define the {\bf augmentation numbers} of $L$ by
\[
\mathit{Aug}_m(L,q) = q^{-dim_\cc V_m(D, \cc)} |V_m(D, \F_q)|
\]
where $|V_m(D, \F_q)|$ denotes the number of elements of $V_m(D, \F_q)$. 
\begin{theorem} \label{thm:augnumbers} For any $m \,|\, 2r(L)$ and any prime power $q$, the augmentation number $\mathit{Aug}_m(L,q)$ is a Legendrian isotopy invariant of $L$.
\end{theorem}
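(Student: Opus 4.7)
The plan is to reduce the statement to the invariance already provided by Theorem \ref{thm:CEDGA}, which says that the stable tame isomorphism type of $(\A(D),\partial)$ is a Legendrian isotopy invariant. So I would show that $\mathit{Aug}_m(L,q)$ is unchanged under (i) tame DGA isomorphism and (ii) algebra stabilization. Granted these two facts, if $D_1$ and $D_2$ are front diagrams for Legendrian isotopic links with compatible Maslov potentials, there exist stabilizations whose Chekanov–Eliashberg DGAs are tamely isomorphic, and the two computations of $\mathit{Aug}_m$ agree.

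For (i), let $\Phi : (\A(D_1),\partial_1) \to (\A(D_2),\partial_2)$ be a tame $\Z[H_1(L)]$-DGA isomorphism. Since tame isomorphisms are compositions of elementary and permutation isomorphisms, $\Phi$ sends each generator $q_i$ of $\A(D_1)$ to a Laurent polynomial in the generators $q'_j$ of $\A(D_2)$ and the coefficient ring generators $t_1^{\pm 1},\ldots,t_c^{\pm 1}$, and the same holds for $\Phi^{-1}$. Pullback by $\Phi$ induces a bijection $\overline{\mathit{Aug}}_m(D_2,\F) \to \overline{\mathit{Aug}}_m(D_1,\F)$, $\varepsilon \mapsto \varepsilon\circ\Phi$, which in coordinates is given on $V_m(D_2,\F) \subset (\F^\times)^c\times \F^{N}$ by applying $\varepsilon$ to the Laurent-polynomial expressions $\Phi(q_i)$. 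This is a regular map of affine varieties, with regular inverse induced by $\Phi^{-1}$; since this description is field-independent, we obtain an isomorphism $V_m(D_1,\F)\cong V_m(D_2,\F)$ of affine varieties valid over any field. Consequently $|V_m(D_1,\F_q)| = |V_m(D_2,\F_q)|$ and $\dim_\cc V_m(D_1,\cc) = \dim_\cc V_m(D_2,\cc)$, so the normalized counts agree.

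For (ii), let $(S\A(D),\partial')$ be the stabilization obtained by adjoining generators $e,f$ with $|e|=|f|+1$, $\partial'e = f$, $\partial'f = 0$, and $\partial'|_{\A(D)} = \partial$. An $m$-graded augmentation $\varepsilon'$ of $(S\A(D),\partial')$ restricts to an $m$-graded augmentation of $(\A(D),\partial)$, and is determined by the additional values $\alpha = \varepsilon'(e)$, $\beta = \varepsilon'(f)$. The relation $\varepsilon'\circ\partial' = 0$ applied to $e$ forces $\beta = 0$, which is compatible with any degree constraint on $f$. If $|e|\not\equiv 0\pmod m$ then $\alpha = 0$ is also forced by the $m$-graded condition, and $V_m(SD,\F) \cong V_m(D,\F)$; if $|e|\equiv 0\pmod m$ then $\alpha$ ranges freely over $\F$ and $V_m(SD,\F) \cong V_m(D,\F)\times\F$. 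In the first case cardinality and complex dimension are unchanged; in the second case both the $\F_q$-point count and $\dim_\cc$ increase by a factor of $q$ and by $1$ respectively, and the normalizing factor $q^{-\dim_\cc V_m}$ absorbs the change. Either way $\mathit{Aug}_m(L,q)$ is preserved.

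The routine pieces are the case analysis at stabilization and the identification of pullback with a regular map; the main point requiring care is verifying that the isomorphism $V_m(D_1,\F)\cong V_m(D_2,\F)$ coming from a tame DGA isomorphism is genuinely an isomorphism of affine varieties in the sense defined at the start of Section \ref{s:Background}, i.e.\ that $\Phi$ and $\Phi^{-1}$ really do act on coordinates by Laurent polynomials (with negative powers only on the $t_i$), and that the $\Z[H_1(L)]$-linearity of $\Phi$ identifies the $(\F^\times)^c$ factors in the correct way across a Legendrian isotopy.
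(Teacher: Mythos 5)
Your proposal is correct and follows essentially the same route as the paper: reduce via Theorem \ref{thm:CEDGA} to invariance under tame isomorphism (which induces an isomorphism of augmentation varieties over any field) and under stabilization, where the case analysis on whether $|e| \equiv 0 \pmod m$ shows the point count and the normalizing exponent $\dim_\cc V_m(D,\cc)$ change compatibly. The only difference is that you spell out the pullback/regularity argument for tame isomorphisms, which the paper asserts in one sentence.
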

\begin{proof}
A tame isomorphism between DGA's induces an isomorphism of the corresponding augmentation varieties for any $\F$.  Thus, according to Theorem \ref{thm:CEDGA} it suffices to show that stabilizing the Chekanov-Eliashberg algebra from $(\A(D), \partial)$ to $(S\A(D), \partial')$ does not change the augmentation number.  Let $V_m(\A(D), \F)$ and $V_m(S\A(D),\F)$ denote augmentation varieties corresponding to these two algebras.  Suppose that the stabilization adds generators $e$ and $f$ in degrees $k$ and $k-1$ respectively.  Since $\partial' e = f$, the augmentation varieties are equal unless $k = 0 \mod m$ in which case the augmentation variety of the stabilized algebra satisfies $V_m(S\A(D),\F) = V_m(\A(D), \F) \times \F$.  Thus, if $k = 0 \mod m$, $\dim_\cc V_m(S\A(D),\cc) = \dim_\cc V_m(\A(D),\cc) +1$, and $|V_m(S\A(D),\F_q)| = q \cdot |V_m(\A(D),\F_q)|$, so the result follows.
\end{proof}

\begin{remark}
\begin{itemize}
\item[(i)]  It is possible to lift the $\Z/2r(L)\Z$-grading of $(\A(D),\partial)$ to a $\Z$-grading where the group ring generators are assigned the degree $|t_i| = 2r(L_i)$.  However, for our purposes there does not appear to be any advantage in doing so.  An augmentation $\varepsilon$ must take the $t_i$ to invertible elements of $\F$, and therefore can still only be $m$-graded when $m|2r(L)$.  
\item[(ii)] The next two items briefly recall from the literature two other options for defining Legendrian isotopy invariant augmentation numbers.  Working over $\Z/2\Z$, Ng and Sabloff in \cite{Ng2006} use a product $2^{-\chi_m^*(D)/2} |\overline{Aug}_m(D, \Z/2\Z)|$ to define augmentation numbers in the case  $m$ is odd or $m=0$.  Here, 
\[
\chi^*_m(D) = \sum_{k = 0}^{m-1} (-1)^k a_k \mbox{  if $m$ is odd, and  }  \chi^*_0(D) = \sum_{k \geq 0} (-1)^k a_k + \sum_{k < 0} (-1)^{k+1} a_k   
\]
where $a_k$ denotes the number of generators of $\A(D)$ with degree congruent to  $k$ mod $m$ including the generators $t_i$ of the coefficient ring.  Their definition extends to define augmentation numbers for any finite field by
\[
\widetilde{Aug}(L,q) = q^{-\chi_m^*(D)/2} |\overline{Aug}_m(D, \F_q)|.
\]
Invariance follows because $\chi_m^*(D)$ increases by $2$ when the algebra is stabilized with new generators in degree $m$ and $m-1$ and is unchanged by all other stabilizations.

The methods of this paper, with Lemma 5 from \cite{Ng2006} used in place of our Lemma \ref{lem:returns2}, can establish  
\[
\widetilde{Aug}(L,q) = (q^{1/2} - q^{-1/2})^{c} R^m_L(q^{1/2}- q^{-1/2})
\]
with $c$ the number of components of $L$.  Note that it is unclear how to extend Ng and Sabloff's definition to the case when $m>0$ is even.

\item[(iii)]  There is a notion of homotopy for augmentations, and this provides an equivalence relation on the set of augmentations for a given DGA.  In \cite{Henry2011}  homotopy classes of augmentations for a Legendrian knot are studied in connection with Morse complex sequences associated to the front diagram of the knot.  The number of homotopy classes of augmentations of $(\A(D), \partial)$ is a Legendrian knot invariant.  Currently, it is unknown to the authors how the number of homotopy classes relates to the augmentation numbers considered in this article.

\item[(iv)]  More refined augmentation number invariants may be defined in a similar manner by counting augmentations that take some fixed value on the group ring generators, $t_1, \ldots, t_c$.  The possible values that augmentations may take on group ring generators are studied in the recent article of Leverson, \cite{Leverson2014}.  In particular, when $m$ is even and $L$ is a $1$-component knot, any $m$-graded augmentation must have $\varepsilon(t_1) = -1$. 
\end{itemize}
\end{remark}

%---
\subsection{Proof of Theorem \ref{thm:Main}}
%---

The proof of Theorem \ref{thm:Main} is based upon the following key result concerning the structure of the augmentation variety.

\begin{theorem} \label{thm:structure}
Suppose $\front$ is the nearly plat front diagram of a Legendrian link $\Leg$ with fixed Maslov potential. Then, we can decompose the augmentation variety into a disjoint union
\[
V_m(D, \F) = \coprod_\rho W_\rho
\]
where the union is over all $m$-graded normal rulings of $\front$.  Each subset $W_\rho$ is the image of an injective regular map
\[
\varphi_\rho: (\F^{\times})^{j(\rho) + c} \times \F^{r(\rho)} \hookrightarrow V_m(D, \F)
\]
where $j(\rho) = \#\mbox{switches} - \#\mbox{right cusps}$ and $c$ is the number of components of $L$.  Finally, $r(\rho)$ is the number of $m$-graded returns of $\rho$ when $m \neq 1$ and is the number of $m$-graded returns and right cusps when $m=1$. 
\end{theorem}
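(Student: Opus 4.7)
The strategy is to transfer the problem from the augmentation variety to combinatorial data living on the front diagram $D$, namely the Morse complex sequences (MCSs) advertised in the outline. The plan is to parametrize $V_m(D,\F)$ by MCSs in two successive normal forms, the A-form and the SR-form, extending the $\F_2$ constructions of \cite{Henry2011} to a general field. The desired decomposition then arises because SR-form MCSs naturally carry an underlying normal ruling.

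First, I would set up a regular bijection
\[
\overline{\mathit{Aug}}_m(D,\F) \;\longleftrightarrow\; \mathcal{M}^{A}_m(D,\F),
\]
between $m$-graded $\F$-valued augmentations and $m$-graded A-form MCSs on $D$. The correspondence is built locally at each crossing and cusp: the handleslide data prescribed by an A-form MCS is given by explicit polynomial formulas in the augmentation values $\varepsilon(q_i)$ and $\varepsilon(t_j)$, and conversely the values $\varepsilon(q_i)$ are read off as coefficients appearing in the MCS. This is to be carried out in Section 5 and gives regularity essentially for free.

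Next, I would establish a regular bijection $\mathcal{M}^{SR}_m(D,\F) \leftrightarrow \mathcal{M}^{A}_m(D,\F)$ in Section 6 via an explicit sequence of local chain-level changes of basis moving through the diagram from right to left. Each SR-form MCS $\Gamma$ has a canonically associated normal ruling $\rho(\Gamma)$ determined by its switch pattern, so
\[
\mathcal{M}^{SR}_m(D,\F) \;=\; \coprod_{\rho} \mathcal{M}^{SR}_m(D,\F;\rho),
\]
summed over $m$-graded normal rulings $\rho$. Once these two bijections are in hand, the composite is a regular bijection that partitions $V_m(D,\F)$ exactly as claimed, and $W_\rho$ is defined as the image of $\mathcal{M}^{SR}_m(D,\F;\rho)$.

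Finally, I would compute the parameter space $\mathcal{M}^{SR}_m(D,\F;\rho)$ directly from the definition of SR-form. The free coefficients in an SR-form MCS with underlying ruling $\rho$ sit at the following locations: each switch contributes a factor of $\F^\times$; each $m$-graded return contributes a factor of $\F$; the marked points $*_i$ contribute the $c$ invertible parameters $\varepsilon(t_i)\in\F^\times$; and if $m=1$ then each right cusp contributes an additional $\F$ factor since its degree $1$ generator is now allowed to be augmented. A linear-algebra bookkeeping along each ruling disk then shows that $\#\text{right cusps}$ independent constraints are imposed (expressing the right-cusp augmentation values as monomials in the switch parameters and the $t_i$'s along the boundary of the corresponding disk), leaving
\[
\bigl(\#\text{switches} + c\bigr) - \#\text{right cusps} \;=\; j(\rho)+c
\]
free $\F^\times$ parameters, together with the $r(\rho)$ free $\F$ parameters at returns (and, when $m=1$, right cusps). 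This produces $\varphi_\rho$ with the advertised domain.

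The main obstacle is verifying regularity of both bijections over an arbitrary field. The $\F_2$ constructions in \cite{Henry2011} are set-theoretic and rely on inductive normalizing procedures that, over a general field, involve dividing by augmentation values of switch generators and by products of $t_i$'s along a disk. Keeping all inverses confined to the allowed $\F^\times$ coordinates, and ensuring that the SR-to-A normalization does not accidentally invert non-invertible quantities, is the delicate point; it is precisely what forces the $\F^\times$ factors to appear exactly at switches and is where the orientation signs and Maslov potential conventions from Section 2 must be tracked with care. Handling the $m=1$ case separately, because of the new right-cusp parameters, and confirming that the constraints at right cusps remain polynomial in the allowed coordinates, round out the proof.
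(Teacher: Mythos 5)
Your architecture is exactly the paper's: $\varphi_\rho$ is built as the composite of the bijection between augmentations and A-form MCSs (Theorem \ref{thm:AugAformbijection}), the bijection between A-form and SR-form MCSs (Theorem \ref{thm:SRAbijection}), and the parametrization of SR-form MCSs compatible with $\rho$ by their free handleslide coefficients subject to one constraint per ruling disk (Theorems \ref{thm:zrho} and \ref{thm:diskEq}), with regularity supplied by showing the A-form coefficients are Laurent polynomials in the SR-form data (Proposition \ref{prop:regularity}). The one step you treat too casually is the ``linear-algebra bookkeeping'' that cuts $(\F^\times)^{\#\mathrm{switches}+c}$ down to $(\F^\times)^{j(\rho)+c}$. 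The right-cusp constraints are multiplicative monomial equations, one per ruling disk, and over an arbitrary field a rank count on exponent vectors does not suffice: an equation such as $x^2=1$ has $0$, $1$, or $2$ solutions depending on $\F$, so ``$\#\mathrm{right\ cusps}$ independent constraints'' is not by itself enough to conclude the solution set is a split torus of the expected dimension, nor even that it is nonempty. The paper handles this by encoding the system in a ruling graph and contracting edges (Lemma \ref{lem:contract}): at each stage one disk equation is solved for a variable occurring with exponent exactly $\pm 1$, and consistency of the residual equation on each connected component is guaranteed because each component carries at least one base-point variable $t_i$ that can absorb it. You would need an equivalent argument --- in effect, that every row of your exponent matrix has a unit pivot reachable by an elimination that never inverts a non-unit --- to legitimately conclude that the fiber over each ruling is $(\F^\times)^{j(\rho)+c}\times\F^{r(\rho)}$ on the nose.
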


Note that this gives the dimension computation\footnote{Equation (\ref{eq:dimC}) is a special case of the following statement.

If $V_1, \ldots, V_n, W$ are complex affine algebraic sets with the $V_i$ non-singular, and there exist injective regular maps 
\[
\varphi_i: V_i \hookrightarrow W \quad \mbox{with} \quad W= \cup \varphi_i(V_i),
\]
then $\dim W = \max \{ \dim V_i \}$.

In the case where $W$ is irreducible and non-singular, this is standard differential topology since all the spaces are smooth manifolds with the maps $\varphi_i$ smooth.  The general case may be treated by appropriately reducing consideration to the non-singular part of a component of $W$ with maximal dimension.
}
\begin{equation} \label{eq:dimC}
\dim_\cc V_m(D, \cc) = \max \{ j(\rho) + c + r(\rho)  \, |\, \rho \mbox{ an $m$-graded ruling of $D$}\}.
\end{equation}

We also use the following lemma that gives a relation between the values of $j(\rho)$ and $r(\rho)$ for distinct normal rulings.

\begin{lemma} \label{lem:returns2} If $\rho$ and $\rho'$ are two $m$-graded normal rulings of the the same front diagram $D$, then
\[
j(\rho) + 2 r(\rho) = j(\rho') + 2 r(\rho').
\]
\end{lemma}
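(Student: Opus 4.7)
The plan is to prove that $j(\rho) + 2r(\rho)$ equals a quantity depending only on $D$ and $\mu$, and in particular is independent of $\rho$.

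First I would observe that every crossing of $D$ is classified by $\rho$ as exactly one of: a switch, a return, or a departure, and that all switches of an $m$-graded ruling have degree $\equiv 0 \pmod m$. Writing $C_m$ for the total number of crossings of $D$ with degree $\equiv 0 \pmod m$ (a quantity depending only on $D$ and $\mu$), this classification yields
\[
C_m = s(\rho) + R_m(\rho) + D_m(\rho),
\]
where $s(\rho), R_m(\rho), D_m(\rho)$ count switches, $m$-graded returns, and $m$-graded departures respectively. If I can show $R_m(\rho) = D_m(\rho)$ for every $m$-graded ruling $\rho$, then $s(\rho) + 2R_m(\rho) = C_m$, and the lemma follows by a short calculation: for $m \neq 1$, $r(\rho) = R_m(\rho)$ gives $j(\rho) + 2r(\rho) = (s(\rho) - R(D)) + 2R_m(\rho) = C_m - R(D)$; for $m = 1$, $r(\rho) = R_m(\rho) + R(D)$ gives $j(\rho) + 2r(\rho) = C_m + R(D)$. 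In either case the right-hand side is manifestly independent of $\rho$.

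For the key step, $R_m(\rho) = D_m(\rho)$, I would use a topological pairing argument. Each crossing of $D$ lies between strands of two distinct disks of $\rho$; any pair of disks coexists over a range of $x$-coordinates bounded by cusps or switches at which one of them is created or destroyed, and at these endpoints the pair is necessarily non-interlaced. Since returns and departures toggle the interlacing state of the pair, the ungraded totals match pair-by-pair. To pass to the $m$-graded counts, I would introduce a function $Q(x) = \sum_{(d_1, d_2) \text{ interlaced at } x} \omega(\Delta(d_1, d_2))$ on vertical slices, with $\Delta(d_1, d_2) = \mu(d_1^u) - \mu(d_2^u) \bmod m$ and $\omega : \Z/m\Z \to \Z$ a weight function supported on the residues $\{0, 1, -1\}$ (the possible values of $\Delta$ that can make a pair-crossing $m$-graded). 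Using the invariant $\mu(d^u) - \mu(d^l) \equiv 1 \pmod m$ for every disk $d$ of an $m$-graded ruling (which holds because all switches are $m$-graded, so strand substitutions at switches preserve Maslov differences mod $m$), I would verify that $Q$ jumps by $+1$ at each $m$-graded departure, by $-1$ at each $m$-graded return, and by $0$ at all switches and at all non-$m$-graded transitions. Since $Q(-\infty) = Q(+\infty) = 0$, telescoping gives $D_m(\rho) - R_m(\rho) = 0$.

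The main obstacle is pinning down the weight function $\omega$ (or an appropriate augmentation of $Q$ by auxiliary terms) so that its local change at each crossing matches the indicator of $m$-gradedness uniformly. The delicate point is that the degree of a crossing between disks $d_1$ and $d_2$ depends on whether the crossing involves upper-upper, lower-lower, upper-lower, or lower-upper strand pairs, giving degrees $\Delta, \Delta, \Delta+1, \Delta-1$ respectively; combined with the six possible interlacing states of a pair, this requires a careful case-by-case verification, and for some parities a naive choice of $\omega$ will fail to cancel contributions from non-$m$-graded crossings that nevertheless involve pairs with $\Delta \in \{0, \pm 1\}$. If a single weight on $\Z/m\Z$ proves insufficient, I would refine $Q$ by stratifying interlaced pairs according to which pair of strand-types sits at the top, or reduce to the analogous statement Lemma~5 of \cite{Ng2006} with adjustments to accommodate general $m$ and general coefficient fields.
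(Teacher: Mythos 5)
Your overall framework is sound: reducing the lemma to the statement that $R_m(\rho)-D_m(\rho)$ is controlled, via the identity $C_m = s(\rho)+R_m(\rho)+D_m(\rho)$, is exactly the paper's strategy, and your bookkeeping in the first paragraph is correct. The gap is in the target of your key step: the claim $R_m(\rho)=D_m(\rho)$ is \emph{false} for general $m$, so no choice of weight function $\omega$ or refinement of $Q$ can establish it. Here is a counterexample. Let $L$ be a two-component link whose front consists of two unknotted ``eyes'' that exchange vertical positions: eye $1$ lies above eye $2$ near the left cusps and below it near the right cusps, so the four strands realize the block transposition via four crossings (in the order: inner strands; then top pair and bottom pair; then inner strands again). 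Take the ruling $\rho$ in which each eye is a single disk, so there are no switches and $\rho$ is $m$-graded for every $m$. Assign Maslov potentials so that the upper strands of the eyes have potentials $1$ and $0$ respectively, and take $m=0$. Tracing the pair of disks through the four crossings gives the sequence of configurations disjoint $\to$ interlaced $\to$ nested $\to$ interlaced $\to$ disjoint, i.e.\ the crossings are, in order, a departure, a return, a departure, a return; their degrees are $0$, $\pm 1$, $\pm 1$, $\pm 2$. Hence exactly one crossing is $0$-graded, and it is a departure: $D_0(\rho)=1$ while $R_0(\rho)=0$. (Your ungraded pair-by-pair telescoping is correct --- here the ungraded counts are $2$ and $2$ --- but the graded refinement fails because a single pair of disks can contribute a graded departure whose ``partner'' return is not graded.) This is precisely the failure you sense in your last paragraph, but it is not a technicality about $\omega$: any function supported on \emph{interlaced} pairs necessarily vanishes at both ends of the diagram and would therefore force $R_m=D_m$, which the example shows is impossible.

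The fix is to prove only what the lemma needs, namely that $R_m(\rho)-D_m(\rho)$ is \emph{independent of $\rho$} (not zero), since then $s(\rho)+2R_m(\rho)=C_m+(R_m(\rho)-D_m(\rho))$ is ruling-independent and your final calculation goes through verbatim. This is what the paper does (Proposition \ref{prop:rminusd}): it defines a count $A(x)$ supported on \emph{non-interlaced} pairs of disks satisfying a Maslov-potential relation --- disjoint pairs whose upper disk's potential exceeds the lower's by $1$, and nested pairs with equal potentials --- chosen so that $A$ jumps by $\pm 1$ exactly at $m$-graded returns and departures and is constant at switches and ungraded crossings. Because the support is the non-interlaced configurations, $A$ at the two ends of a nearly plat diagram is computed from the vertical order and Maslov potentials of the cusps alone, hence depends only on $(D,\mu)$; but it need not vanish there, which is exactly what accommodates examples like the one above (there $A$ equals $1$ at the left and $0$ at the right, recovering $R_0-D_0=-1$). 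So: keep your reduction, but replace the false identity $R_m=D_m$ with the ruling-independence of $R_m-D_m$, proved by a state function supported on non-interlaced rather than interlaced pairs.
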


Assuming Theorem \ref{thm:structure} and Lemma \ref{lem:returns2} we now prove Theorem \ref{thm:Main}.

\begin{proof}[Proof of Theorem \ref{thm:Main}]
There is no loss of generality in assuming $L$ is nearly plat since both sides of equation (\ref{eq:11}) are Legendrian isotopy invariants.  According to Equation (\ref{eq:dimC}) we can fix an $m$-graded normal ruling $\rho_0$ so that $\dim_\cc V_m(D, \cc) = j(\rho_0) + c + r(\rho_0)$.  It follows from Lemma \ref{lem:returns2} that $j(\rho_0)$ must be maximal so that $j(\rho_0) = d \left(= \deg_z R^m_L(z)\right)$.  Now, using Theorem \ref{thm:structure} we compute 
\begin{equation} \label{eq:Main1}
\mathit{Aug}_m(L; q)  =  q^{-\dim V_m(D, \cc)}\cdot |V_m(D; \F_q)| =  
 q^{-[d+c+r(\rho_0)]}\sum_{\rho} (q-1)^{j(\rho)+c} q^{r(\rho)}. 
\end{equation} 
For any $m$-graded ruling $\rho$, Lemma \ref{lem:returns2} gives $r(\rho)= r(\rho_0) +\frac{1}{2}( d - j(\rho))$ and making this substitution in the summation allows us to simplify (\ref{eq:Main1}) to
\[
  q^{-[d+c]/2} \sum_{\rho} (q^{1/2}-q^{-1/2})^{j(\rho)+c} = q^{-[d+c]/2}z^{c} R^m_L(z).
\]
\end{proof}

Constructions leading to the proof of Theorem \ref{thm:structure} occupy most of the remainder of the article with the proof completed at the end of Section~\ref{s:MCS2}.  We conclude the present section by proving Lemma \ref{lem:returns2} using the following proposition whose statement and proof are analogous to Lemma 5 of \cite{Ng2006}.

\begin{proposition}
\label{prop:rminusd}
For  $\ruling$  an $m$-graded normal ruling of a nearly plat front diagram $\front$, let $r$ and $d$ denote the number of $m$-graded returns and $m$-graded departures of $\ruling$, respectively.  The difference $r - d$ depends only on $\front$ and its Maslov potential, and, in particular, is independent of the ruling $\ruling$. 
\end{proposition}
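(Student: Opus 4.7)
Following the template of Lemma 5 in \cite{Ng2006}, the plan is to exhibit $r(\rho) - d(\rho)$ as a ruling-independent quantity by producing, for each $m$-graded crossing $q$ of $\front$, a local sign $\tau(q) \in \{-1, 0, +1\}$ computed from data intrinsic to $\front$ and $\Maslov$ alone, and then verifying that these local signs detect returns and departures uniformly across all choices of $\rho$.

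Step 1 (defining a local sign). For each $m$-graded crossing $q$ between strands $k$ (upper) and $k+1$ (lower), I would read off the $x$-direction of travel of each of the two strands at $q$. Since $\front$ is nearly plat, the orientation of each strand at every non-singular point is determined globally by which cusps the strand connects to, so the pair of orientations at $q$ is a function of $\front$ alone. The quantity $\tau(q)$ is then defined from this pair of orientations together with the Maslov potential classes modulo $m$ on the two strands, using the sign convention of Ng--Sabloff.

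Step 2 (case analysis, the main obstacle). Given any $m$-graded normal ruling $\ruling$, the goal is to verify that $\tau(q) = +1$ whenever $q$ is an $m$-graded return of $\ruling$, $\tau(q) = -1$ whenever $q$ is an $m$-graded departure, and the total contribution at switches vanishes (either switch by switch, or upon pairing the subtypes). I would perform this check via the local pictures of Figure \ref{fig:returns}: in each of the three normality subcases (R1)--(R3) the companion disks constrain the orientations of the companion strands at $q$ in a specific way, and one reads off that these constraints, combined with the degree condition at $q$, force $\tau(q) = +1$. The analogous analysis (with the figure reflected) gives $\tau(q) = -1$ at a departure. At a switch of type (S1)--(S3), the companion strands of $k$ and $k+1$ either bound disjoint or nested disks that turn a corner at $q$; in each subcase the orientations either yield $\tau(q) = 0$ directly, or the contribution cancels against the paired subtype. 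The main technical obstacle is handling the nested cases (S2) and (S3), where the companion strand orientations interact with the crossing in the opposite sense from (S1), so care is required to check that all switch contributions do cancel.

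Step 3 (conclusion). Summing the local contributions yields
\[
r(\ruling) - d(\ruling) \;=\; \sum_{q} \tau(q),
\]
where the sum ranges over the $m$-graded crossings of $\front$. Because the right-hand side is manifestly independent of the ruling $\ruling$, we conclude that $r - d$ depends only on $\front$ and its Maslov potential, as required.
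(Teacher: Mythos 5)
Your Step 2 cannot be carried out, and this sinks the whole strategy: there is no sign $\tau(q)$ computed from data intrinsic to $\front$ and $\Maslov$ alone (orientations, Maslov potentials, or anything else local at $q$) that equals $+1$ at every $m$-graded return and $-1$ at every $m$-graded departure, because whether a given crossing is a return, a departure, or a switch genuinely depends on the choice of ruling $\ruling$. Concretely, take the standard plat front of the maximal Thurston--Bennequin trefoil, with three degree-$0$ crossings $q_1,q_2,q_3$ between the second and third strands. Its normal rulings are: (i) all three crossings switches; (ii) $q_1$ a switch, $q_2$ a departure, $q_3$ a return; (iii) $q_3$ a switch, $q_1$ a departure, $q_2$ a return. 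So the single crossing $q_2$ is a departure for ruling (ii) and a return for ruling (iii), forcing your $\tau(q_2)$ to be simultaneously $-1$ and $+1$. Hence the identity $r(\ruling)-d(\ruling)=\sum_q\tau(q)$ of Step 3 cannot hold with a ruling-independent, crossing-local $\tau$, and no amount of care with the nested cases (S2)--(S3) will repair this.

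The actual argument (and the actual content of Lemma 5 of Ng--Sabloff, which you cite as a template) is global in $x$ rather than local at the crossings. For each generic vertical slice one sets $A(x)=\sum_{i<j}a_{ij}(x)$, where $a_{ij}(x)\in\{0,1\}$ records whether the pair of ruling disks $E_i,E_j$ is disjoint or nested at $x$ with the appropriate relation between their Maslov potentials mod $m$. This quantity does depend on $\ruling$ at interior slices, but it changes by exactly $\pm 1$ when $x$ crosses an $m$-graded return or departure and is unchanged at switches, non-graded crossings, and all other configurations, so $r-d$ telescopes to the difference of the two boundary values $A(x_0)$ and $A(x_1)$. The key point your outline is missing is where the ruling-independence actually enters: because $\front$ is nearly plat, just to the right of the left cusps and just to the left of the right cusps \emph{every} normal ruling is forced to pair exactly the strands meeting at a common cusp, so $A(x_0)$ and $A(x_1)$ are determined by $\front$ and $\Maslov$ alone. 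To salvage your proof you must replace the crossing-local sign by such a slice-wise potential function and evaluate it at the plat boundary.
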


\begin{proof}
Suppose that the left cusps of $D$ occur just before $x=x_0$ and the right cusps occur just after $x=x_1$.
Let $E_1, \ldots, E_n$ denote the disks of $\ruling$, and let $M_i$ denote the value of the Maslov potential (mod $m$) on the upper strand of $E_i$.  
  For values of $x \in [x_0,x_1]$ that do not coincide with crossings or cusps of $D$, we assign a value $a_{ij}(x)= 0$ or $1$ to each pair of disks $E_i$ and $E_j$ as follows:  
  
  If $E_{i}$ and $E_{j}$ are disjoint at $x$ with $M_i = M_j +1$  (resp. $M_j = M_i +1$) and $E_i$ is above (resp. below) $E_j$, then $a_{ij}(x)=1$.  If $E_{i}$ and $E_{j}$ are nested at $x$ with $M_i = M_j$, then $a_{ij}(x)=1$.  In all other cases, $a_{ij}(x) = 0$.

Next, set $A(x) = \sum_{i<j} a_{ij}(x)$;  observe that, as $x$ increases, $A(x)$ increases (resp. decreases) by $1$ when passing an $m$-graded returns (resp. $m$-graded departures) and is unchanged by all other types of crossings.  Therefore, by starting at $x=x_0$ and proceeding to $x=x_1$ we see that
\[
r-d = A(x_0) -A(x_1).
\] 
As the right hand side depends only on the Maslov potential of $D$, the Proposition follows.
\end{proof}

\begin{proof}[Proof of Lemma \ref{lem:returns2}]
In order to simplify the notation, we let $s, r,$ and $d$ denote the number of switches, $m$-graded returns, and $m$-graded departures of a normal ruling $\ruling$. We also let $j = s - \gamma$, where $\gamma$ is the number of right cusps of $\front$.

Suppose $\ruling$ and $\ruling'$ are $m$-graded normal rulings of $\front$. If $m\neq1$, we must show $j + 2 r = j' + 2r'$ holds, whereas if $m=1$, we must show $j + 2 (r+\gamma) = j' + 2(r'+\gamma)$ holds. In either case, it suffices to show $j + 2 r = j' + 2r'$. From Proposition~\ref{prop:rminusd}, we have $r - d = r' - d'$.  Moreover, we have the identity $s + r + d = s' + r' + d'$, since this is simply the number of crossings of degree $0$ modulo $m$. Thus,
$s - s' = (r' - r) + (d' - d)$. We use this identity to prove $j + 2 r = j' + 2r'$ holds if and only if $r - d = r' - d'$ holds:  
\begin{eqnarray*}
j + 2 r = j' + 2r' &\Leftrightarrow& s + 2 r = s' + 2 r'  \\ \nonumber
&\Leftrightarrow& s - s' = 2(r'-r) \\ \nonumber
&\Leftrightarrow& (r' - r) + (d' - d) = 2 (r' - r) \\ \nonumber
&\Leftrightarrow& r - d = r' - d' 
\end{eqnarray*}
\end{proof}

% ********************
\section{Morse Complex Sequences over $\ring$}
\mylabel{s:MCS1}

To study the augmentation variety using normal rulings, we make use of Morse complex sequences which are combinatorial structures on front diagrams that are more refined than normal rulings.  The definition of a Morse complex sequence is due to Pushkar \cite{Pushkar'a}, and first appears in print in \cite{Henry2011}. The definition is motivated by generating families; see also \cite{Henry2013}.  
As we shall see, Morse complex sequences in certain standard forms, namely SR-form and A-form, are closely related to normal rulings and augmentations, respectively.

Before defining  Morse complex sequences we begin with some preliminaries.  Let $D$ be a front diagram for a Legendrian link, and let $\ring$ be a commutative ring with identity. Let $\ring^\times$ denote the group of units of $\ring$.  A \textbf{handleslide mark} (often just called a handleslide) on the front diagram $\front$ is a pair consisting of $r \in \ring$, called the \textbf{coefficient}, and a vertical line segment in the $xz$-plane avoiding crossings and cusps of $\front$ with endpoints on two strands of $\front$. Given $a < b$, the subset of $D$ with $\{ a \leq x \leq b \}$ is an \textbf{elementary tangle} if it contains a single crossing, left cusp, right cusp, or handleslide mark.

As in the discussion of the Chekanov-Eliashberg algebra, we assume each component of $\front$ includes a marked point $*_i$. Whenever we work with Morse complex sequences, we require the marked points of $\front$ to be located at right cusps and we will call such right cusps {\bf marked}. 

\begin{definition}
\mylabel{defn:MCS}
A \textbf{Morse complex sequence} (abbr. \textbf{MCS}) over $\ring$ for $D$ is a triple $\MCS = ( \{ (C_l, d_l) \}, \{ x_l \}, H )$ where: 

\begin{enumerate}
	\item $H$ is a collection of handleslides marks on $\front$ with coefficients in $\ring$;
	\item $\{ x_l \}$ is an increasing sequence, $x_0 < x_1 < \cdots < x_{M}$, so that all $x$ values of $D$ lie between $x_0$ and $x_M$ and the vertical lines $\{x=x_l\}$ decompose $\front \cup H$ into a collection of elementary tangles;
	\item For each $0 \leq l \leq M$, $(C_l,d_l)$ is an ungraded complex, i.e. $C_l$ is an $R$-module with differential $d_l: C_l \rightarrow C_l$ satisfying $d_l^2=0$.  Moreover, $C_l$ is free with a basis consisting of the points of $\front \cap \{ x = x_l \}$ labeled $e_1, \ldots, e_{s_l}$ from \emph{top to bottom}. 
	The differential $d_l$ is required to be triangular in the sense that for all $1 \leq j \leq s_l$, \[d_l e_j = \sum_{j<k} r_{jk} e_k \text{ where } r_{jk} \in \ring ;\]
	\item For each $0 \leq l \leq M$, if the $k$ and $k+1$ strands at $x=x_l$ meet at a crossing  (resp. left cusp) in a tangle bordered by $x=x_l$ then the coefficient of $e_{k+1}$ in $d_l e_k$, denoted $\langle d_l e_k, e_{k+1} \rangle$, satisfies $\langle d_l e_k, e_{k+1} \rangle = 0$ (resp. $\langle d_l e_k, e_{k+1} \rangle = 1$).  If they meet at an unmarked right cusp, we require $\langle d_l e_k, e_{k+1} \rangle =-1$.  If the right cusp contains the marked point $*_i$, then we require only that $\langle d_l e_k, e_{k+1} \rangle=-s_i$, for some invertible element $s_i \in \ring^{\times}$ and we say that $\MCS$ \textbf{assigns the value $s_i$} to the marked point $*_i$.  
	\item 
Finally, for $0\leq l < M$ the complexes $(C_l,d_l)$ and $(C_{l+1},d_{l+1})$ are required to satisfy some relationship depending on the tangle $T$ appearing between $x_l$ and $x_{l+1}$.

\begin{enumerate}
	\item If $T$ contains a \emph{crossing} between strands $k$ and $k+1$, then $\varphi : (C_l, d_l) \to (C_{l+1}, d_{l+1})$ defined by 
$$
\varphi(e_i) = \left\{ \begin{array}{rl}
 e_i &\mbox{ if $i \notin \{ k, k+1 \} $} \\
 e_{k+1} &\mbox{ if $i = k$ } \\
 e_{k} &\mbox{ if $i = k+1$ } 
       \end{array} \right.
$$
is an isomorphism of complexes;
	\item If $T$ contains a \emph{handleslide} between strands $j$ and $k$ with $j < k$ with coefficient $r \in \ring$, then $\varphi : (C_l, d_l) \to (C_{l+1}, d_{l+1})$ defined by 
$$
\varphi(e_i) = \left\{ \begin{array}{rl}
 e_i &\mbox{ if $i \neq j $} \\
 e_{j}-r e_k &\mbox{ if $i = j$ } 
       \end{array} \right.
$$
is an isomorphism of complexes;
	\item If $T$ contains a \emph{right cusp} between strands $k$ and $k+1$, then the linear map 
$$
\varphi(e_i) = \left\{ \begin{array}{rl}
 [e_i] &\mbox{ if $i < k $} \\
 $[$e_{i+2}] &\mbox{ if $i \geq k$ } 
       \end{array} \right.
$$
is an isomorphism of complexes from $(C_{l+1}, d_{l+1})$ to the quotient of $(C_{l}, d_l)$ by the acyclic subcomplex spanned by $\{e_k, d_{l} e_k \}$. 

	\item If $T$ contains a \emph{left cusp} between strands $k$ and $k+1$, then $(C_{l}, d_{l})$ and $(C_{l+1}, d_{l+1})$ are related as in the case of a right cusp with the roles of $(C_{l}, d_{l})$ and $(C_{l+1}, d_{l+1})$ interchanged. 
	
\end{enumerate}
\end{enumerate}
\end{definition}

Just as in the case of normal rulings and augmentations, the notion of an MCS may be refined once we choose a Maslov potential for $D$ and a divisor $m$ of $2r(L)$.  Such a choice provides each $C_l$ with a $\Z/m\Z$-grading, $C_l = \bigoplus_{ i \in \Zm} (C_l)_i$, where $(C_l)_i$ is the span of those strands at $x=x_l$ for which the Maslov potential is congruent to $i \mod m$.  We say that an MCS $\MCS$ is \textbf{$m$-graded} if
\begin{enumerate}
\item for all $l$, the differential $d_l$ has degree $-1 \mod m$, i.e. $d_l(C_l)_i \subset (C_l)_{i-1}$ and
\item all handleslide marks have their endpoints on strands for which the Maslov potential takes the same value modulo $m$. 
\end{enumerate}

Note that, in general, the complexes $(C_l, d_l)$ are not determined by the handleslide set since if a left cusp lies between $x_l$ and $x_{l+1}$, then (5) (d) of Definition \ref{defn:MCS} does not allow us to uniquely recover $(C_{l+1},d_{l+1})$ from $(C_l,d_l)$.  A restriction at left cusps may be imposed to overcome this problem.  For an MCS $\MCS$, we say that 
a left (resp. right) cusp between strands $k$ and $k+1$ in the tangle to the left (resp. right) of $x_l$ is \textbf{simple} if $ d_{l} e_{k}= t e_{k+1}$ with  $t \in \field^{\times}$ as in Definition \ref{defn:MCS} (4), and $\langle d_{l+1} e_j, e_k \rangle = \langle d_{l+1} e_j, e_{k+1} \rangle = 0$ for all $j < k$. 

\begin{proposition} \label{prop:simple}
 An MCS with simple left cusps is uniquely determined by its handleslide set $H$.
\end{proposition}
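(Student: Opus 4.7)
The plan is to proceed by induction along the sequence $x_0 < x_1 < \cdots < x_M$ and show that each complex $(C_l, d_l)$ is uniquely determined by the handleslide set $H$. The base case is immediate: $x_0$ lies to the left of the front diagram, so $C_0$ is the zero complex.

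For the inductive step, we examine the elementary tangle between $x_l$ and $x_{l+1}$ and verify that $(C_{l+1}, d_{l+1})$ is forced by $(C_l, d_l)$. If the tangle contains a crossing, the explicit swap isomorphism in Definition \ref{defn:MCS}(5)(a) does the job. If it contains a handleslide, the isomorphism in (5)(b), with coefficient supplied by $H$, uniquely recovers $d_{l+1}$. If it contains a right cusp, (5)(c) realizes $(C_{l+1}, d_{l+1})$ as the quotient of $(C_l, d_l)$ by the acyclic subcomplex spanned by $e_k$ and $d_l e_k$; this quotient is unambiguous, and at a marked right cusp the value $s_i$ is read off as $-\langle d_l e_k, e_{k+1}\rangle$.

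The real content of the proposition lies in the left cusp case, since (5)(d) on its own leaves genuine freedom in $d_{l+1}$. Writing the new basis elements as $e_k, e_{k+1}$, condition (4) forces $\langle d_{l+1} e_k, e_{k+1}\rangle = 1$, but one could still have $d_{l+1} e_k = e_{k+1} + w$ for some $w\in\operatorname{span}(e_{k+2},\ldots)$, and for each $j<k$ the differential $d_{l+1} e_j$ could acquire extra terms $\alpha_j e_k + \beta_j e_{k+1}$. The simple cusp hypothesis is exactly tailored to kill this ambiguity: the identity $d_{(\cdot)}e_k = t e_{k+1}$ forces $w=0$, and the vanishing of $\langle d_{(\cdot)} e_j, e_k\rangle$ and $\langle d_{(\cdot)} e_j, e_{k+1}\rangle$ for $j<k$ forces $\alpha_j = \beta_j = 0$. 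Once these ambiguous terms are ruled out, the quotient isomorphism required by (5)(d) pins down all remaining matrix entries of $d_{l+1}$ to equal those of $d_l$ under the obvious index shift, so $(C_{l+1}, d_{l+1})$ is the direct sum of $(C_l, d_l)$ with the trivial two-dimensional cusp complex on $e_k, e_{k+1}$.

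The main obstacle is the bookkeeping in the left cusp case: one must carefully identify the two species of ambiguity allowed by (5)(d) and verify that the two parts of the simple cusp condition eliminate exactly these. After this, the inductive step closes uniformly, and two MCSs with simple left cusps sharing the same handleslide set $H$ must agree at each $x_l$, and therefore coincide as triples.
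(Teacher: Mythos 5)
Your proof is correct and follows essentially the same route as the paper: induct from $x_0$ to $x_M$ using the base case $C_0=\{0\}$, note that conditions (5)(a)--(c) uniquely determine the next complex at crossings, handleslides, and right cusps, and observe that the simple-cusp hypothesis eliminates exactly the ambiguity left by (5)(d), forcing $(C_{l+1},d_{l+1})$ to be the split extension of $(C_l,d_l)$ by the two-dimensional acyclic cusp complex. No gaps.
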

\begin{proof}
To show that the sequence of complexes, $(C_l,d_l)$, is uniquely determined by $H$ work inductively from $x_0$ to $x_M$.  At $x_0$, the generating set is empty, so $C_0 = \{0\}$.  Once $(C_l,d_l)$ is known, $(C_{l+1},d_{l+1})$ is uniquely determined by Definition \ref{defn:MCS} (5) (a), (b), or (c) in the case the tangle between $x_l$ and $x_{l+1}$ contains a crossing, handleslide, or right cusp, respectively.  If a left cusp sits between $x_l$ and $x_{l+1}$, then combining the assumption that the left cusp is simple with Definition \ref{defn:MCS} (4) and (5) (d) shows that  $(C_{l+1},d_{l+1})$ is isomorphic to the split extension of complexes $(C_{l},d_{l}) \oplus ( \mbox{Span}_R\{a,b\}, d')$ where $d' a = b$; the isomorphism  is induced by mapping the generators of $C_l$, i.e strands of $D$ at $x_l$, to the corresponding strands at $x_{l+1}$ and mapping $a$ and $b$ to the upper and lower strands of the cusp, respectively.
\end{proof}

 Given an arbitrary handleslide set $H$, we may attempt to find a sequence of complexes $\{(C_l,d_l)\}$ so that $(\{(C_l,d_l)\}, \{x_l\}, H)$ is an MCS with simple left cusps by beginning with $C_0= \{0\}$ and uniquely extending to the right as in the proof of Proposition \ref{prop:simple}.  Note that it is possible that the requirements in Definition \ref{defn:MCS} (4) may not hold when we reach a crossing or right cusp.  However, it is clear from the definition that this is all that can go wrong, so we have the following:

\begin{proposition} \label{prop:H}
A handleslide set $H$ is the handleslide set of an MCS with simple left cusps if and only if when inductively defining the complexes $\{(C_l,d_l)\}$ from left to right we have $\langle d_l e_k, e_{k+1} \rangle = 0$; $\langle d_l e_k, e_{k+1} \rangle = -1$; or $\langle d_l e_k, e_{k+1} \rangle \in R^{\times}$ whenever $x_l$ respectively precedes a crossing; unmarked right cusp; or marked right cusp at which the $k$ and $k+1$ strands meet. 
\end{proposition}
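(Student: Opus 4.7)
The plan is to reuse the inductive procedure from the proof of Proposition \ref{prop:simple}. Given any handleslide set $H$, beginning with $C_0 = \{0\}$, rules (5)(a)--(d) of Definition \ref{defn:MCS} together with the simple-left-cusp convention uniquely extend $(C_l, d_l)$ across any elementary tangle. The resulting sequence fails to be an MCS precisely when condition (4) of Definition \ref{defn:MCS} is violated at some slice $x_l$. Since (4) imposes no constraint at a handleslide tangle, and since the simple-left-cusp convention automatically enforces $\langle d_{l+1} e_k, e_{k+1}\rangle = 1$ immediately after a left cusp, the only tangles at which (4) can potentially fail are those containing crossings or right cusps.

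For the forward implication, suppose $\MCS = (\{(C_l,d_l)\}, \{x_l\}, H)$ is an MCS with simple left cusps. Proposition \ref{prop:simple} identifies the $(C_l,d_l)$ with the complexes produced by the inductive construction applied to $H$, and condition (4) of Definition \ref{defn:MCS} then supplies verbatim the stated requirements on $\langle d_l e_k, e_{k+1}\rangle$ at each crossing and right cusp.

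For the reverse implication, suppose $H$ satisfies the stated conditions and run the inductive construction. I would verify by induction on $l$ that $(C_l,d_l)$ is a complex with triangular differential and that condition (4) of Definition \ref{defn:MCS} holds at $x_l$. Triangularity of $d_0$ is vacuous, and is preserved by rules (5)(a) and (5)(b), which are respectively conjugation by the transposition of two adjacent basis vectors and by an upper-triangular elementary change of basis. Rule (5)(c) at a right cusp quotients $(C_l,d_l)$ by $\mbox{Span}_R\{e_k, d_l e_k\}$; the hypothesis that $\langle d_l e_k, e_{k+1}\rangle$ equals $-1$ or lies in $R^\times$ guarantees that this span is an acyclic rank-two subcomplex and that the induced differential on the surviving strands remains triangular. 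Rule (5)(d) at a simple left cusp adjoins a pair $a,b$ with $d'a = b$, preserving triangularity and automatically giving $\langle d_{l+1} e_k, e_{k+1}\rangle = 1$. Condition (4) at each slice is then either the hypothesis (at crossings and right cusps) or built in (at left cusps).

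The only substantive point is checking that triangularity is preserved when passing through a right cusp. This reduces to observing that after quotienting by $\mbox{Span}_R\{e_k, d_l e_k\}$, the differential induced on $C_{l+1}$, identified with the span of the remaining strands, retains the property that each $d_{l+1} e_j$ is a combination of strictly lower-indexed strands, since this was true of $d_l$ on $C_l$. With this check in hand, the proposition follows.
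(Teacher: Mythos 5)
Your proposal is correct and follows the same route as the paper, which simply runs the inductive construction of Proposition \ref{prop:simple} and observes that the only obstructions are the condition (4) requirements at crossings and right cusps. The paper treats the verification as immediate ("it is clear from the definition that this is all that can go wrong"), whereas you spell out the details — in particular the check that invertibility of $\langle d_l e_k, e_{k+1}\rangle$ makes the quotient at a right cusp free on the surviving strands with triangular differential — which is a worthwhile elaboration but not a different argument.
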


\begin{figure}[t]
\labellist
\small\hair 2pt
\pinlabel {$a$} [br] at 18 78
\pinlabel {$b$} [br] at 31 31

\pinlabel {$r$} [br] at 48 54

\pinlabel {$a$} [br] at 66 78
\pinlabel {$-ar$} [br] at 90 78
\pinlabel {$rb$} [br] at 94 54
\pinlabel {$b$} [br] at 104 31

\pinlabel {$a$} [br] at 166 78
\pinlabel {$b$} [br] at 185 54
\pinlabel {$r$} [br] at 206 31
\pinlabel {$a$} [br] at 226 78
\pinlabel {$-br$} [br] at 249 54
\pinlabel {$b$} [br] at 266 54
\pinlabel {$a$} [br] at 329 78
\pinlabel {$-br$} [br] at 353 54
\pinlabel {$b$} [br] at 368 54
\pinlabel {$-r^{-1}$} [br] at 401 31
\pinlabel {$a$} [br] at 410 78
\pinlabel {$ar^{-1}$} [br] at 443 78
\pinlabel {$-br$} [br] at 454 54
\pinlabel {$ar^{-1}b^{-1}$} [br] at 491 78
\pinlabel {$ar^{-1}$} [br] at 525 78
\pinlabel {$-br$} [br] at 496 54
\endlabellist
\centering
\includegraphics[scale=.9]{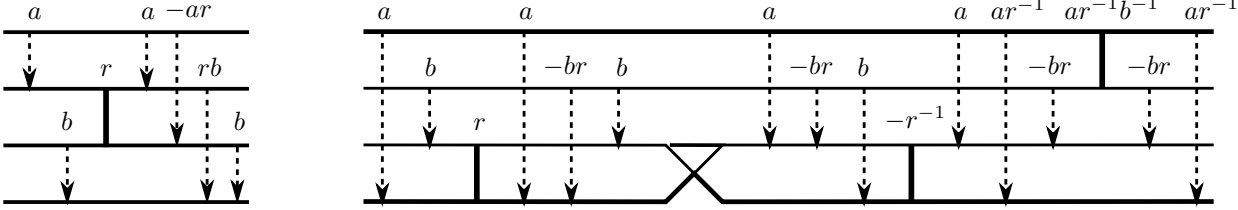}
\caption{We use dotted arrows to encode the differentials of chain complexes of $\MCS$. If $\langle d e_i, e_j \rangle \neq 0$, then a dotted arrow is drawn from strand $i$ to strand $j$ with label $\langle d e_i, e_j \rangle$. The left figure demonstrates the effect of a handleslide on the differential of a chain complex. The right figure shows the differentials of chain complexes near a type (S2) switch in an SR-form MCS.}
\label{fig:switchgradients}
\end{figure}

\begin{figure}[t]
\labellist
\small\hair 2pt
\pinlabel {$a$} [br] at 17 165
\pinlabel {$b$} [br] at 17 117
\pinlabel {$r$} [br] at 50 140
\pinlabel {$-r^{-1}$} [bl] at 117 140
\pinlabel {$-ar$} [bl] at 151 165
\pinlabel {$br$} [bl] at 151 117

\pinlabel {$a$} [br] at 204 142
\pinlabel {$b$} [bl] at 222 142
\pinlabel {$r$} [br] at 242 117
\pinlabel {$-r^{-1}$} [bl] at 305 117
\pinlabel {$ar^{-1}b^{-1}$} [br] at 317 165
\pinlabel {$ar^{-1}$} [bl] at 349 142
\pinlabel {$-br$} [br] at 334 142

\pinlabel {$a$} [br] at 398 142
\pinlabel {$b$} [bl] at 415 142
\pinlabel {$r$} [br] at 436 165
\pinlabel {$-r^{-1}$} [bl] at 498 165
\pinlabel {$ar^{-1}b^{-1}$} [br] at 508 116
\pinlabel {$-r^{-1}a$} [bl] at 539 142
\pinlabel {$rb$} [br] at 523 142

\pinlabel {$a$} [br] at 15 53
\pinlabel {$b$} [bl] at 31 6
\pinlabel {$r$} [br] at 50 29
\pinlabel {$a$} [br] at 136 52
\pinlabel {$b$} [br] at 136 6

\pinlabel {$a$} [br] at 204 53
\pinlabel {$b$} [bl] at 221 30
\pinlabel {$r$} [br] at 245 6
\pinlabel {$arb^{-1}$} [br] at 300 52
\pinlabel {$b$} [br] at 330 30
\pinlabel {$a$} [bl] at 348 53

\pinlabel {$a$} [br] at 396 53
\pinlabel {$b$} [bl] at 413 30
\pinlabel {$r$} [br] at 436 52
\pinlabel {$a^{-1}rb$} [br] at 492 6
\pinlabel {$a$} [br] at 523 30
\pinlabel {$b$} [bl] at 539 53

\pinlabel {(S1)} [tl] at 70 110
\pinlabel {(S2)} [tl] at 261 110
\pinlabel {(S3)} [tl] at 456 110

\pinlabel {(R1)} [tl] at 70 0
\pinlabel {(R2)} [tl] at 261 0
\pinlabel {(R3)} [tl] at 456 0

\endlabellist
\centering
\includegraphics[scale=.65]{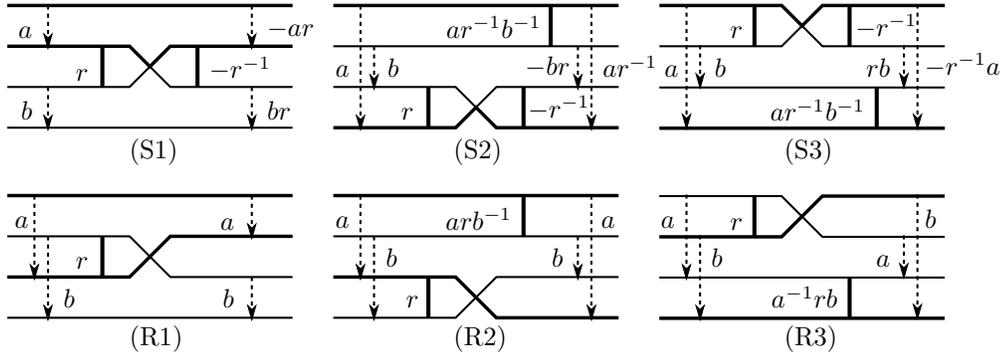}
\caption{The arrangement of handleslides near switches (top row) and returns (bottom row) in an SR-form MCS compatible with a normal ruling. For returns we allow $r=0$. Note that if the chain complex to the left of the left-most handleslide is standard with respect to a normal ruling, then the chain complex to the right of the right-most handleslide is as well.}
\label{fig:SRform}
\end{figure}

\subsection{SR-form MCSs and normal rulings}
In this subsection we will define a type of MCS, namely an SR-form MCS, that has a standard form with respect to a ruling $\rho$.  Such an MCS has simple left cusps and handleslides arranged in a particular manner near switches and $m$-graded  returns of $\ruling$.  Consequently, for many values of $l$ the chain complexes $(C_l,d_l)$ will be closely related to the involutions associated with $\ruling$;  see Lemma \ref{lem:NormalForm} below.  In the remainder of this section, we restrict attention to MCSs with coefficients in a field $\field$.

Let $\ruling$ be a normal ruling of a front diagram $\front$, and let $\MCS = ( \{ (C_l, d_l) \}, \{ x_l \}, H )$ be an MCS for $D$.  We denote by $\ruling_{x_l}$  the fixed-point free involution of $\ruling$ on the points $D \cap (\{x_l\}\times \R)$. We say the chain complex $(C_l, d_l)$ of $\MCS$ is \textbf{standard with respect to $\ruling$} if for all $i < j$ $$\langle d_l e_i, e_j \rangle \neq 0 \mbox{ if and only if } \ruling_{x_l}(i)=j.$$  (In our pictorial presentation of MCSs and normal rulings, as in Figure \ref{fig:switchgradients} and Section 2.1.1,  at $x=x_l$ dotted arrows connect precisely those strands that form the boundary of a common ruling disk.)

Suppose, in addition, that $D$ is equipped with a Maslov potential with respect to which $\rho$ is $m$-graded.
\begin{definition}
\mylabel{defn:SR-form}
We say an $m$-graded MCS, $\MCS = ( \{ (C_l, d_l) \}, \{ x_l \}, H )$, over a field $\F$ is in \textbf{SR-form compatible with $\ruling$} if every left cusp is simple and the handleslide set of $\MCS$ consists of only the following handleslides:

\begin{enumerate}
	\item Near \textit{switches}:  The handleslide marks and their coefficients appear as in the top row of Figure \ref{fig:SRform}.  Specifically, there are handleslides connecting the crossing strands immediately to the left and right of the crossing with respective coefficients $r$ and $-r^{-1}$ for some $r \in \F^{\times}$.  For a switch of type (S2) or (S3) these handleslides are immediately followed by a third handleslide that connects the companion strands of the switch strands with coefficient $ar^{-1}b^{-1}$.  Here, $a$ (resp. $b$) is the coefficient relating the boundary strands of the outer (resp. inner) of the two ruling disks prior to these handleslides.   

	\item Near \textit{$m$-graded returns}: The handleslide marks and their coefficients appear as in the bottom row of Figure \ref{fig:SRform}.   That is, there is always a handleslide connecting the crossing strands prior to the return with coefficient $r \in \F$.  (Here, $r$ is allowed to be $0$.)  For returns of type (R2) and (R3) a second handleslide located after the return connects the two companion strands of the crossing strands with coefficient $arb^{-1}$ and $a^{-1}rb$, respectively. Here, $a$ (resp. $b$) is the coefficient in the differential that relates the boundary strands of the upper (resp. lower) of the two disks prior to these handleslides.   
	
 \item Near \textit{right cusps}:  If $m =1$, there is a handleslide immediately to the left of each right cusp that connects the two strands that meet at this right cusp.  The coefficient $r \in \F$ is arbitrary (and possibly zero).
\end{enumerate}
\end{definition}

The set $\mathit{MCS}_m^{\ruling}(\front;\field)$ consists of all SR-form MCSs of $D$ that are compatible with $\ruling$. We define $\mathit{MCS}_m^{SR}(\front;\field)$ to be $\bigcup_{\ruling} \mathit{MCS}_m^{\ruling}(\front;\field)$, where the union is over all $m$-graded normal rulings of $\front$.

We have the following result concerning the form of the chain complexes in an SR-form MCS.

\begin{lemma} \label{lem:NormalForm}
Except for $x_l$ within the clusters of handleslides near switches and returns (as specified by Definition \ref{defn:SR-form}), the chain complexes, $(C_l,d_l)$, of an SR-form MCS, $\MCS$, associated with a graded normal ruling $\rho$ are standard with respect to $\rho$.
\end{lemma}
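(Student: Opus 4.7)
The plan is to induct on $l$, tracing the chain complexes from $x_0$ to $x_M$. The base case is $(C_0,d_0)=(0,0)$, which is vacuously standard. For the inductive step, I will assume $(C_l,d_l)$ is standard with respect to the relevant ruling involution at some $x_l$ lying outside every handleslide cluster, and let $x_{l'}$ be the next such $x$-value; between them lies either a single non-cluster tangle (simple left cusp, right cusp when $m\neq 1$, $m$-graded departure, or non-$m$-graded crossing) or a full cluster as prescribed in Definition \ref{defn:SR-form}.

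The non-cluster cases I plan to handle quickly via the isomorphisms in Definition \ref{defn:MCS}(5). At a simple left cusp, the two new strands are declared companions by Definition \ref{def:NR}(2), and simplicity gives the unique new non-zero coefficient $\langle d\,e_k,e_{k+1}\rangle=1$, extending standardness. At a right cusp with $m\neq 1$, the meeting strands are companions by normality, so the inductive hypothesis forces $d_l e_k$ to have its only non-zero coefficient on $e_{k+1}$; quotienting by $\langle e_k,d_l e_k\rangle$ then simply removes the cusp pair and leaves all remaining coefficients intact. At an $m$-graded departure or a crossing with degree not $0 \bmod m$, no handleslides appear, and $m$-gradedness of $\rho$ forces such crossings to satisfy case (3a) of Definition \ref{def:NR}; the transposition isomorphism $\varphi$ of Definition \ref{defn:MCS}(5)(a) then mirrors exactly the conjugation $\rho_{i+1}=(k\ k{+}1)\circ\rho_i\circ(k\ k{+}1)$, preserving standardness.

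The main work will lie in the switch and return clusters, where standardness genuinely fails inside the cluster and must be re-established at its right edge. For each of the six patterns (S1)--(S3) and (R1)--(R3) of Figure \ref{fig:SRform}, I plan to compose the handleslide isomorphisms from Definition \ref{defn:MCS}(5)(b) together with the crossing isomorphism (5)(a) in the left-to-right order dictated by Definition \ref{defn:SR-form}, and directly compute the resulting coefficients on the crossing strands and their companion strands; this calculation is encoded graphically in Figures \ref{fig:switchgradients} and \ref{fig:SRform}. The coefficients $r$, $-r^{-1}$, and $ar^{-1}b^{-1}$ at a switch (and the analogues at returns) are rigged precisely so that every ``interlaced'' differential introduced by the intermediate handleslides or by the crossing itself cancels, while producing exactly the new companion differentials required by the post-cluster involution. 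The case $m=1$ at a right cusp is a minor variant of the ordinary right-cusp argument: the single preceding handleslide alters $d_l$ only in a way that leaves the cusp pair's coefficient non-zero and the other relevant coefficients vanishing, so the quotient step then proceeds as before. The main obstacle here is organizational rather than conceptual: I must carefully track how the pre-cluster differential entries $a$ and $b$ on the two ruling disks meeting at the crossing interact with each handleslide, and verify that no spurious non-zero entries survive outside the final companion pattern predicted by $\rho_{i(l')}$.
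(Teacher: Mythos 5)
Your proposal is correct and follows essentially the same route as the paper's proof: a left-to-right induction whose non-cluster cases (simple left cusps, right cusps, departures, and non-$m$-graded crossings) follow directly from Definition \ref{defn:MCS}(5) together with the corresponding change in the ruling involution, and whose switch/return cases reduce to the explicit coefficient computations recorded in Figures \ref{fig:switchgradients} and \ref{fig:SRform}. The only difference is that you spell out the right-cusp and $m=1$ cases in slightly more detail than the paper does.
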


\begin{proof} Working left to right, we assume that the complex $(C_l,d_l)$ has standard form with respect to $\rho$ before a cusp, or crossing and verify that this is still the case after passing the cusp or crossing (and any corresponding handleslides).  For cusps, this follows from the assumption that left cusps are simple and Definition \ref{defn:MCS} (5c) and (5d).  When passing a crossing that is a departure, requirement (5a) in Definition \ref{defn:MCS} implies that the corresponding complexes of $\MCS$ change only by a reordering of basis vectors.  The involution in the definition of $\rho$ changes in a corresponding way at a departure, so that the complex $(C_{l+1},d_{l+1})$ is also in standard form with respect to $\rho$ after the crossing.  In the case  of a switch or return, the way that the collection of handleslides and the crossing affect the differential is specified in Definition \ref{defn:MCS} (5a) and (5b).    Note that for the case of an (S2) switch, a detailed calculation is carried out in Figure \ref{fig:switchgradients} where the coefficients $\langle d_l x_i, x_j \rangle$ are indicated with dotted arrows from the $i$-th strand to the $j$-th strand at $x=x_l$.  The results of the calculation in general are shown in Figure \ref{fig:SRform}.
\end{proof}

Note that Lemma~\ref{lem:NormalForm} holds in a slightly more general setting.  Suppose $H$ is a handleslide set satisfying the SR-form conditions with respect to $\rho$.  If the conditions of Proposition \ref{prop:H} are satisfied up to $x_l$ so that $(C_l,d_l)$ may be defined and $x_l$ is not located between the handleslides of a switch or return, then $(C_l,d_l)$ is standard with respect to $\rho$.  In Section \ref{s:MCS2}, we use a further generalization where the normal ruling $\rho$ is itself only defined for the portion of the front diagram to the left of $x=x_l$.  Either of these extensions follow from the same proof as Lemma \ref{lem:NormalForm} since the induction proceeds from left to right. 

\subsection{SR-form Morse complex sequences as solutions of a system of equations} \label{sec:diskEquation}

All left cusps of an SR-form MCS are simple, so Proposition \ref{prop:simple} implies an SR-form MCS is uniquely determined by the collection of handleslide marks, $H$.  
Moreover, a potential collection of handleslide marks, $H$, for an SR-form MCS associated to a normal ruling, $\rho$, is determined by a choice of non-zero coefficient $r \in \F^\times$ at each of the switches of $\rho$ and a (possibly zero) coefficient $r \in \F$ at each of the $m$-graded returns of $\rho$ and, if $m=1$, at right cusps.  Here, we take $r$  to be the coefficient of the left-most handleslide of those handleslides grouped near the switch or return;  the coefficients of the remaining handleslides at the  switch or return are then specified as in Definition \ref{defn:SR-form} once the complexes $(C_l,d_l)$ have been determined before the switch or return.   (Note that as long as the handleslide set produces an MCS prior to the given switch or return, the coefficients $a$ and $b$ must be non-zero according to Lemma \ref{lem:NormalForm} and there is no problem with inverting them.)

Given such a handleslide set, $H$, we may apply Proposition \ref{prop:H} to determine if $H$ corresponds to an SR-form MCS.  To this end, we claim that the condition 
\begin{equation} \label{eq:CrRestrict}
\langle d_l e_k, e_{k+1} \rangle = 0
\end{equation}  
is always satisfied prior to a crossing between the $k$ and $k+1$ strands.  To see this, apply the generalization of Lemma \ref{lem:NormalForm}.  Strands that meet at a crossing cannot be paired by the ruling at the crossing, and hence the required coefficient is zero before the crossing.  (In the case of a switch or return, Lemma~\ref{lem:NormalForm} tells us $\langle d_l e_k, e_{k+1} \rangle = 0$ before the handleslide that precedes the crossing.  However, passing this handleslide will not affect the vanishing of the coefficient in question.)

Although (\ref{eq:CrRestrict}) always holds before crossings, the requirements of Proposition \ref{prop:H} may fail when we reach a right cusp.
Indeed, Lemma \ref{lem:NormalForm} combined with the fact that strands meeting at a right cusp must be paired by $\rho$ shows that the coefficient $\langle d_l e_k, e_{k+1} \rangle$ is non-zero, but it is not necessarily $-1$ as required if the cusp is not marked.  

For a given right cusp, the coefficient $\langle d_l e_k, e_{k+1} \rangle$ may be computed as follows.  Recall from Section~\ref{sec:disks}, the right cusp forms the right endpoint of a disk, $D_i$, of the ruling, $\rho$. We keep track of the coefficient, $a$, with which the generator corresponding to the  lower strand of this disk appears in the differential of the upper strand as we move from the left cusp of the disk towards this right cusp.  Initially, the coefficient is $1$, as this is required in (4) of Definition \ref{defn:MCS}.  The coefficient, $a$, remains constant except when passing the collection of handleslides near a switch or return involving $D_i$ and another disk of $\rho$.  The overall effect of these handleslides on $a$ is indicated in Figure \ref{fig:SRform}; passing a return or right cusp does not affect $a$ at all, while passing a switch replaces $a$ with a multiple $c \, a$.  The factor $c$ is determined by the first handleslide coefficient, $r$, of the switch, and the combinatorics of $D_i$ at the switch as follows.  
\begin{enumerate}
\item When the switch is Type (S1) and $D_i$ is the upper (resp. lower) of the two disks $c = -r$ (resp. $c = r$).
\item When the switch is Type (S2) and $D_i$ is the inner (resp. outer) of the two disks $c = -r$ (resp. $c = r^{-1}$).  
\item When the switch is Type (S3) and $D_i$ is the inner (resp. outer) of the two disks $c = r$ (resp. $c = -r^{-1}$).  
\end{enumerate}
These calculations are indicated in the top line of Figure \ref{fig:SRform} with an expanded calculation for a Type (S2) switch in Figure \ref{fig:switchgradients}.

When the right cusp of $D_i$ is reached, the coefficient $\langle d_l e_k, e_{k+1} \rangle$ is the product of all the factors, $c$, associated to $D_i$ by switches appearing along the boundary of $D_i$.  Therefore, according to Proposition \ref{prop:H} a collection of handleslide marks $H$ arranged as in the definition of an SR-form MCS corresponds to an MCS if and only if this product is equal to $-1$ for every disk $D_i$ that does not contain a marked point on its right cusp.

Recall that we have denoted the set of all SR-form MCSs compatible with a fixed normal ruling $\rho$ by $\mathit{MCS}_m^{\ruling}(\front;\field)$.  
The above discussion allows us to realize $\mathit{MCS}_m^{\ruling}(\front;\field)$ as the solution set of a system of polynomial equations, i.e. an affine algebraic set.  For this purpose, we introduce variables, $x_1, \ldots, x_n$, corresponding to the switches, $s_1, \ldots, s_n$, of $\rho$.  To each disk of the ruling, $D_i$, we associate an equation
\begin{equation} \tag{$R_i$}
\prod_j y_j = w_i
\end{equation}   
where the product is over those $j$ such that the switch, $s_j$, corresponding to $x_j$ involves the disk $D_i$; the right hand side is
\[
w_i = \left\{ \begin{array}{ll} -1 & \mbox{if $D_i$ does not have a marked point at its right cusp;} \\
																-t_k & \mbox{if $D_i$ has the marked point $*_k$ at its right cusp;} \\
																\end{array} \right.
\]
and the factors $y_j$ are given by
\[
y_j = \left\{ \begin{array}{ll} -x_j & \mbox{if $s_j$ has Type $(S1)$ and $D_i$ is the upper disk;} \\
																x_j & \mbox{if  $s_j$ has Type $(S1)$ and $D_i$ is the lower disk;} \\
																-x_j & \mbox{if $s_j$ has Type $(S2)$ or $(S3)$ and $D_i$ is the inner disk;} \\
																x_j^{-1} & \mbox{if $s_j$ has Type $(S2)$ or $(S3)$ and $D_i$ is the outer disk.} \\ 
																\end{array} \right.
																\]
We refer to the system of equations $(R_i)$ as the \textbf{disk equations} associated to the ruling $\rho$.

\begin{theorem} \label{thm:zrho} The set $\mathit{MCS}_m^{\ruling}(\front;\field)$ is in bijection with the affine algebraic set  
\[
Z_\rho \subset (\F^\times)^c \times (\F^\times)^n \times \F^{r} = \{ (t_1, \ldots, t_c, x_1, \ldots, x_n, z_1, \ldots, z_r)\}
\]
 given by the solution set of the disk equations $(R_i)$ associated to $\rho$.  Here, $c$ and  $n$ denote the number of components of $L$ and the number of switches.  If $m \neq 1$ (resp. $m=1$),  $r$ denotes the number of $m$-graded returns of $\rho$ (resp. number of $m$-graded returns and right cusps).    
\end{theorem}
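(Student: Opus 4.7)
The plan is to realize both directions of the bijection explicitly, leveraging the analysis already carried out just before the theorem statement. Define a map
\[
\Phi : \mathit{MCS}_m^{\rho}(\front;\field) \longrightarrow (\F^\times)^c \times (\F^\times)^n \times \F^{r}
\]
by sending an SR-form MCS $\MCS$ to the tuple $(t_1,\ldots,t_c,x_1,\ldots,x_n,z_1,\ldots,z_r)$, where $t_k$ is the value that $\MCS$ assigns to the marked right cusp $*_k$ (this lies in $\F^\times$ by Definition \ref{defn:MCS}(4)), $x_j \in \F^\times$ is the coefficient of the left-most handleslide in the cluster at switch $s_j$, and $z_i \in \F$ is the coefficient of the left-most handleslide in the cluster at the $i$-th $m$-graded return (and, when $m=1$, at each right cusp).

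The first step is to check that the image of $\Phi$ lies in $Z_\rho$. Using Lemma \ref{lem:NormalForm}, one tracks the coefficient $a$ with which the lower strand of a ruling disk $D_i$ appears in the differential of its upper strand as $x$ increases from the left cusp of $D_i$ to its right cusp. The discussion in Section \ref{sec:diskEquation} shows that $a$ is multiplied by the factor $y_j$ (given by the explicit case analysis recorded in the disk equations) exactly when $x$ crosses a switch on the boundary of $D_i$, and is unchanged otherwise. At the right cusp of $D_i$ the product of these factors equals $\langle d_l e_k, e_{k+1}\rangle$, which by Definition \ref{defn:MCS}(4) must equal $-1$ at an unmarked cusp and $-t_k$ at a cusp carrying $*_k$. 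This is precisely the equation $(R_i)$, so $\Phi(\MCS) \in Z_\rho$.

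For injectivity, by Proposition \ref{prop:simple} an SR-form MCS is determined by its handleslide set, and by Definition \ref{defn:SR-form} this handleslide set is determined by the coefficients $x_j$ and $z_i$ together with the auxiliary coefficients appearing at types (S2), (S3), (R2), (R3); but those auxiliary coefficients are themselves determined (via the formulas $ar^{-1}b^{-1}$, $arb^{-1}$, $a^{-1}rb$) by the first coefficient of the cluster and by $a,b$, which are read off from the preceding chain complex. Therefore $\Phi$ is injective. For surjectivity, given a point of $Z_\rho$, one uses the prescribed $x_j, z_i$ together with the inductively-determined $a,b$ to write down a candidate handleslide set $H$, and then verifies the hypotheses of Proposition \ref{prop:H}. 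The crossing condition $\langle d_l e_k,e_{k+1}\rangle=0$ is automatic by Lemma \ref{lem:NormalForm} (in its mild generalization described after its proof), since strands meeting at a crossing are never paired by $\rho$ just before the crossing. The right-cusp condition $\langle d_l e_k,e_{k+1}\rangle = -1$ (resp.\ $-t_k \in \F^\times$) is, by the calculation of the previous paragraph run in reverse, exactly the equation $(R_i)$, which holds by assumption.

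The main obstacle is the inductive step in which the auxiliary coefficients $a,b$ are shown to be non-zero at every point where they are needed to define later handleslide coefficients; this is needed both for $\Phi$ to be well-defined on the left and for the construction of the inverse on the right. This non-vanishing follows from Lemma \ref{lem:NormalForm}: between clusters the complex is standard with respect to $\rho$, so the coefficient relating the two boundary strands of any ruling disk is non-zero. Once this is in place, the argument reduces to bookkeeping of the formulas in Definition \ref{defn:SR-form} and Figure \ref{fig:SRform}, and one concludes that $\Phi$ is the required bijection onto $Z_\rho$.
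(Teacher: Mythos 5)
Your argument is essentially the paper's proof run in the opposite direction: the paper starts from a point of $(\F^\times)^c\times(\F^\times)^n\times\F^r$ and constructs the handleslide set, while you start from the MCS and read off the coordinates; both rest on the same ingredients, namely Propositions \ref{prop:simple} and \ref{prop:H}, Lemma \ref{lem:NormalForm}, and the computation of $\langle d_l e_k, e_{k+1}\rangle$ at the right cusp of each ruling disk as a product of factors $c$ accumulated at switches. One small correction: with your identification of $x_j$ as the first handleslide coefficient $r$ at the switch $s_j$, the multiplier at a Type (S3) switch is $c=r$ for the inner disk and $c=-r^{-1}$ for the outer disk, whereas the disk-equation factors are $y_j=-x_j$ and $y_j=x_j^{-1}$, so the product of the $c$'s is \emph{not} "precisely the equation $(R_i)$" as you claim; your map lands in $Z_\rho$ only after the change of variables $x_j\mapsto -x_j$ at (S3) switches, which is exactly why the paper's proof sets the first coefficient equal to $-x_j$ (rather than $x_j$) at Type (S3) switches. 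Since that sign change is itself a bijection the theorem still follows, but as written the claim that the image lies in $Z_\rho$ is off by a sign at (S3) switches.
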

\begin{proof}
Let values of $(t_1, \ldots, t_c, x_1, \ldots, x_n, z_1, \ldots, z_r) \in (\F^\times)^c \times (\F^\times)^n \times \F^{r}$ be given.  Create a collection of handleslide marks $H$ arranged as in an SR-form MCS compatible with $\rho$  by taking the first coefficient $r$ at the switch $s_j$ to be $x_j$  except when $s_j$ has Type $(S3)$, in which case we take $r = -x_j$.  In addition, at returns and right cusps we use $z_j$ for the first handleslide coefficient.  Then, the above discussion shows that $H$ forms the handleslide set of an SR-form MCS with values $(t_1, \ldots t_c)$ assigned to marked points if and only if the disk equations $(R_i)$ hold.
\end{proof}

\subsection{Disk equations associated to ruling graphs}
In this subsection, we formalize the combinatorial data from  a normal ruling that is needed to formulate the disk equations by introducing the notion of a ruling graph.  Ruling graphs provide a slightly wider setting for considering the disk equations, and this allows us to, in the remainder of this section, analyze the varieties $Z_\rho$ via an inductive approach.  

\begin{definition} A {\bf ruling graph} is an abstract graph $\Gamma$ with vertex set $V = \{v_1, \ldots, v_N\}$ and edge set $E = \{e_1, \ldots, e_n\}$ with the following additional structure:
\begin{itemize}
\item[(1)] Each vertex $v_i$ is labeled with an invertible element $w_i \in \F[t_1^{\pm 1}, \ldots, t_c^{\pm 1}]$;
\item[(2)] Edges are oriented; and
\item[(3)] There is a function $\alpha : E \rightarrow \{D, N\}$ which provides each edge with a type of $D$ or $N$.
\end{itemize}
\end{definition}

Each ruling graph has an associated system of equations, $P(\Gamma)$, which we call the {\bf disk equations}.  The  variables, $x_1, \ldots, x_n$, are in correspondence with the edges of $\Gamma$.  For each vertex, $v_i$, we define an equation $P_i$ via
\begin{equation}
\tag{$P_i$}
\prod{ y_j} = w_i
\end{equation} 
where the product is over those $j$ such that the edge $e_j$ has an end at $v_i$.  The term $y_j$ is given by
\[
y_j = \left\{ \begin{array}{ll} -x_j & \mbox{if $e_j$ is oriented away from $v_i$} \\
																x_j & \mbox{if $e_j$ is oriented towards $v_i$ and $\alpha(e_j) = D$} \\
																x_j^{-1} & \mbox{if $e_j$ is oriented towards $v_i$ and $\alpha(e_j) = N$} \\
																-x_j^2 & \mbox{if $e_j$ is a loop at $v_i$ of either type.} \\ 
																\end{array} \right.
																\]
																See Figure \ref{fig:RulGraph} for an example of a ruling graph with its associated disk equations.
																
\begin{figure}
\labellist
\small
\pinlabel $N$ [br] at 46 298
\pinlabel $N$ [b] at 80 245
\pinlabel $D$ [tr] at 78 92
\pinlabel $D$ [b] at 82 168
\pinlabel $N$ [tl] at 252 92
\pinlabel $D$ [tl] at 166 92
\pinlabel $D$ [b] at 244 168
\pinlabel $D$ [b] at 244 206
\pinlabel $D$ [b] at 244 126
\pinlabel $v_1$ [l] at 170 324
\pinlabel $v_2$ [r] at -2 164
\pinlabel $v_3$ [t] at 164 -2
\pinlabel $v_4$ [b] at 328 164
\pinlabel $v_5$ [br] at 158 168
\endlabellist
\centerline{\includegraphics[scale=.5]{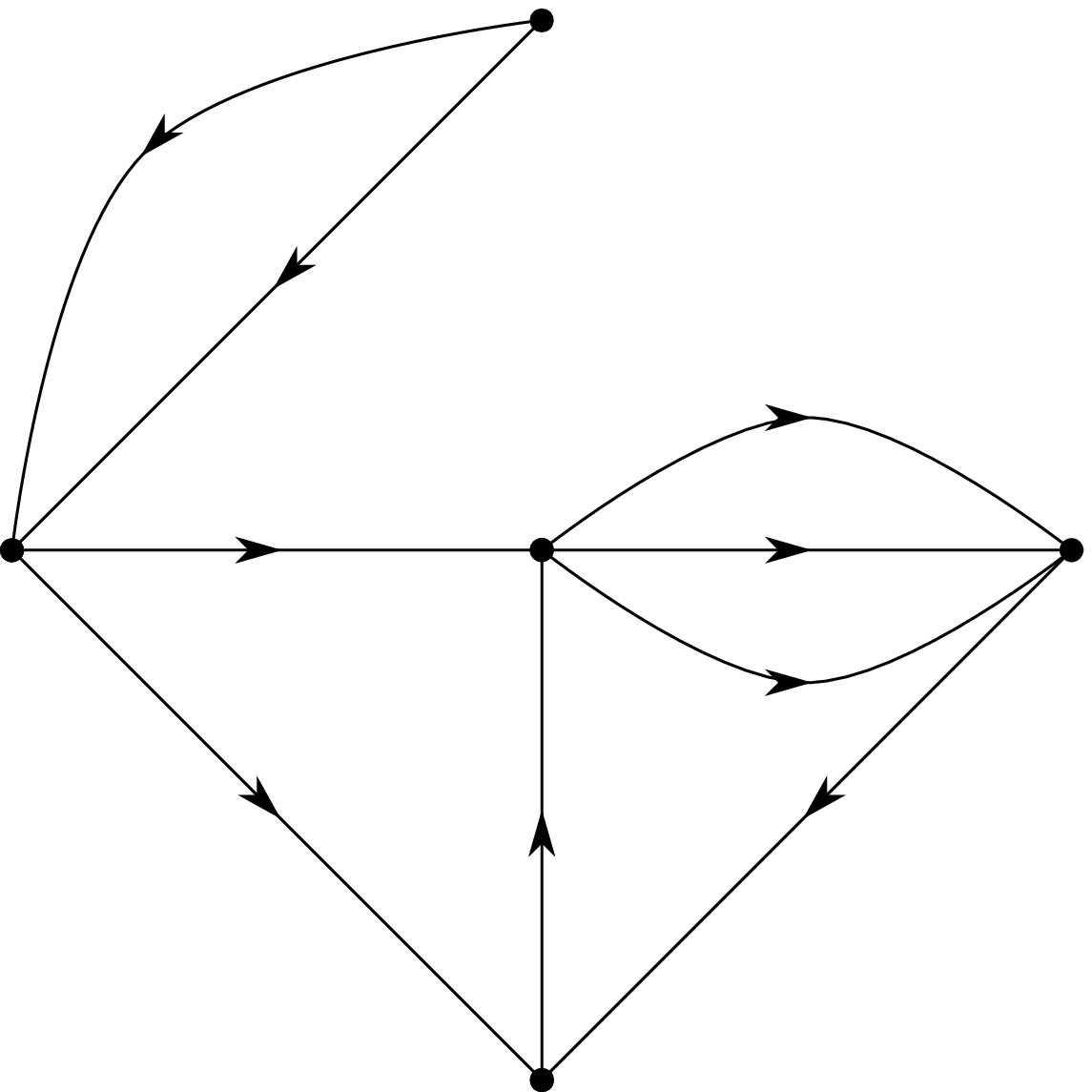} \quad \quad \raisebox{3cm}{$\begin{array}{rl}
(-x_1)(-x_2) & = w_1 \\ &\\
x_1^{-1} x_2^{-1} (-x_3) (-x_4) & = w_2 \\ & \\
x_4 (-x_5) x_6^{-1} & = w_3 \\ & \\
(-x_6)x_7 x_8 x_9 & = w_4 \\ & \\
x_3 x_5 (-x_7)(-x_8)(-x_9) & = w_5
\end{array}$}}
\caption{A ruling graph (left) and its associated disk equations (right) with respect to a particular ordering of the edges.
}
\label{fig:RulGraph}
\end{figure}

\subsubsection{Ruling graphs associated to normal rulings}

We can associate a ruling graph $\Gamma_{\rho}$ to a front diagram $D$ with normal ruling $\rho$ and  base points located at one right cusp of each component.   The vertices of $\Gamma_{\rho}$, $v_1, \ldots, v_N$, are in correspondence with the disks of $\rho$.  The labels of vertices, $w_1, \ldots, w_N$, are given by $w_i = -1$ if the right cusp of $D_i$ is not marked and $w_i = -t_k$ if the right cusp contains the marked point $*_k$. For each switch of $\rho$ we assign an edge between the vertices corresponding to the two disks of the switch.  Moreover, the edge has type $D$ (resp. type $N$) if the two disks are disjoint (resp. nested) at the switch, i.e. if the switch has Type $(S1)$ (resp. Type $(S2)$ or $(S3)$).  We orient edges so that at disjoint (resp. nested) switches the orientation points from the upper disk to the lower disk (resp. from the inner disk to the outer disk).   

\begin{remark}
Not every ruling graph arises from this construction.  For instance, note that $\Gamma_{\rho}$ does not have any loop edges since each switch involves two distinct disks of $\rho$.  
\end{remark}

Lemma~\ref{lem:rg} is an immediate consequence of the definitions.
\begin{lemma} \label{lem:rg}
The disk equations $(P_i)$ associated to the ruling graph $\Gamma_{\rho}$ are identical to the disk equations $(R_i)$ associated to the normal ruling $\rho$.  
\end{lemma}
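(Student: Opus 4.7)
The plan is to verify the asserted equality of the two systems by directly matching, term by term, the contributions to each equation under the bijection that sends a disk $D_i$ of $\rho$ to the corresponding vertex $v_i$ of $\Gamma_\rho$ and a switch of $\rho$ to its associated edge. Since the disk equations have one equation per disk/vertex with factors indexed by the switches/edges incident to it, it suffices to check agreement of (a) the right-hand sides and (b) the factor contributed on each side of every incident switch/edge.

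First I would check the right-hand sides: by construction the label $w_i$ of the vertex $v_i$ of $\Gamma_\rho$ is exactly the same quantity ($-1$ or $-t_k$) that appears on the right of the equation $(R_i)$ for the corresponding disk $D_i$, so these agree trivially. Next I would handle the edge/switch contributions by splitting into the two geometric cases in the definitions. For a Type $(S1)$ switch, $D_i$ (upper) and $D_j$ (lower) are disjoint at the switch, so $\Gamma_\rho$ assigns a $D$-type edge oriented from $v_i$ to $v_j$; according to the $(P_i)$ rules this contributes $-x$ at $v_i$ (edge oriented away) and $x$ at $v_j$ (edge oriented toward, type $D$), which is precisely what $(R_i)$ and $(R_j)$ prescribe for the upper and lower disk at an $(S1)$ switch. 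For switches of Type $(S2)$ or $(S3)$, $D_i$ (inner) and $D_j$ (outer) are nested, so $\Gamma_\rho$ assigns an $N$-type edge from $v_i$ to $v_j$; the $(P_i)$ rules then give $-x$ at the inner vertex $v_i$ (oriented away) and $x^{-1}$ at the outer vertex $v_j$ (oriented toward, type $N$), agreeing exactly with the prescription of $(R_i)$ at inner and outer disks of a nested switch.

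Finally, I would remark that the loop case in the definition of $(P_i)$ never arises here, because every switch of a normal ruling joins two \emph{distinct} ruling disks, so $\Gamma_\rho$ contains no loops. Since both systems have the same variables (one per switch/edge), the same equations indexed by disks/vertices, matching right-hand sides, and matching factors in every case, the two systems coincide. There is no serious obstacle to this verification; the only thing to be careful about is to keep the orientation conventions and the assignment of inner/outer versus upper/lower straight when matching $(S1)$ against the $D$-type rule and $(S2)/(S3)$ against the $N$-type rule.
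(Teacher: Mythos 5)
Your proof is correct and follows essentially the same route as the paper, which simply declares the lemma an immediate consequence of the definitions; your term-by-term matching of the right-hand sides and of the $(S1)$/$D$-type and $(S2),(S3)$/$N$-type factors, together with the observation that loop edges cannot occur, is exactly the verification that makes it immediate.
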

There is, however, one small distinction at the level of solution sets, since we do not include any variables corresponding to returns or right cusps when considering disk equations of $\Gamma_\rho$.  These variables do not actually appear in the disk equations $(R_i)$ of $\rho$, so it follows that the respective solution sets of $(P_i)$ and $(R_i)$,  are related by
\begin{equation} \label{eq:solutions}
Z_\rho = Z({\Gamma_\rho}) \times \F^r
\end{equation}
with $r$ as in Theorem \ref{thm:zrho}.

\subsubsection{Analyzing the solution set of the disk equations}

For an arbitrary ruling graph $\Gamma$, we let $Z(\Gamma) \subset (\F^{\times})^c\times(\F^{\times})^n$ denote the solution set of the disk equations, $P(\Gamma)$.

\begin{lemma} \label{lem:orient} The isomorphism type of $Z(\Gamma)$ is independent of the orientation of edges.
\end{lemma}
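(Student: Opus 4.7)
The plan is to show that reversing the orientation of a single edge induces a regular automorphism of the ambient space $(\F^{\times})^{c} \times (\F^{\times})^{n}$ that carries one solution set onto the other; since any two orientations on $\Gamma$ differ by a finite sequence of single-edge reversals, the general case follows by composition.

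Fix an edge $e_j$ and let $\Gamma'$ denote the ruling graph obtained from $\Gamma$ by reversing only $e_j$; all other data (vertex labels $w_i$, types $\alpha$, other orientations) is unchanged. I will exhibit a map
\[
\phi_j : (\F^{\times})^{c} \times (\F^{\times})^{n} \longrightarrow (\F^{\times})^{c} \times (\F^{\times})^{n}
\]
which is the identity on every coordinate except $x_j$, and whose inverse is itself a regular map. Then $\phi_j$ is a regular isomorphism of the ambient space, and it suffices to verify that $\phi_j$ carries $Z(\Gamma')$ bijectively onto $Z(\Gamma)$ by checking that each disk equation $P_i$ of $\Gamma$ pulls back to the corresponding equation $P'_i$ of $\Gamma'$.

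The definition of $\phi_j$ depends on the type of $e_j$. If $e_j$ has type $D$ and connects distinct vertices $v_a, v_b$, set $\phi_j(x_j) = -x_j$; under reversal, the factor contributed by $e_j$ to $P_a$ changes from $-x_j$ to $x_j$ and to $P_b$ from $x_j$ to $-x_j$, both of which are corrected by the substitution $x_j \mapsto -x_j$. If $e_j$ has type $N$ and connects distinct vertices $v_a, v_b$, set $\phi_j(x_j) = -x_j^{-1}$; then the $e_j$-factor in $P'_a$ is $x_j^{-1}$, which under $\phi_j$ becomes $(-x_j^{-1})^{-1} = -x_j$, matching the original factor in $P_a$, and similarly the factor in $P'_b$ is $-x_j$, which pulls back to $-(-x_j^{-1}) = x_j^{-1}$, matching the original in $P_b$. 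Finally, if $e_j$ is a loop, its contribution to a vertex equation is $-x_j^2$ regardless of orientation, so one may take $\phi_j$ to be the identity.

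In each case $\phi_j$ restricts to an automorphism of $\F^{\times}$ in the $j$-th coordinate (with inverse $x_j \mapsto -x_j$ or $x_j \mapsto -x_j^{-1}$ respectively, each of which is regular on $(\F^{\times})^{n}$ since negative powers of the $x_i$ are allowed in the definition of regular map). Thus $\phi_j$ is a regular isomorphism identifying $Z(\Gamma')$ with $Z(\Gamma)$. I expect the only subtlety is bookkeeping the sign and inversion in the type $N$ case, but once the factor-by-factor verification above is carried out there is no further obstacle: given any two orientations on the edge set of $\Gamma$, composing the corresponding $\phi_j$'s over the edges where they disagree produces the desired regular isomorphism between the two solution sets, proving the lemma.
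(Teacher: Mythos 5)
Your proof is correct and takes essentially the same approach as the paper, which also handles edge reversal by the change of variables $x_j \mapsto -x_j$ for type $D$ edges and $x_j \mapsto -x_j^{-1}$ for type $N$ edges, with loops unaffected. Your factor-by-factor verification fills in exactly the computation the paper leaves implicit.
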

\begin{proof}
Reversing a non-loop edge $e_j$ results in a change of variables for the associated disk equations where all occurrences of $x_j$ are replaced with $-x_j$ if the edge has type $D$ or with $-x^{-1}_j$ if the edge has type $N$.  Reversing a loop edge does not change the disk equations.
\end{proof}

Suppose now that $\Gamma$ is a ruling graph with an edge $e_k$ oriented from $v_i$ to $v_j$ with $v_i \neq v_j$.  A second ruling graph $\Gamma'$ arises from the following procedure:
\begin{enumerate}
\item Remove all edges between $v_i$ and $v_j$ that have the same type ($D$ or $N$) as $e_k$.  Let $s$ denote the number of such edges including $e_k$.
\item Merge $v_i$ and $v_j$ into a single vertex $\tilde{v}$ with label $\tilde{w} = 
\left\{ \begin{array}{ll} (-1)^sw_iw_j & \mbox{if $\alpha(e_k) = N$} \\
													(-1)^sw_i^{-1}w_j	& \mbox{if $\alpha(e_k) = D$}
													\end{array} \right.$.
\item If $e_k$ is of type $D$, change the type of all edges that had previously had exactly one vertex at $v_i$.																									\end{enumerate}
We say that $\Gamma'$ is obtained from $\Gamma$ by {\bf contraction} along the edge $e_k$; see Figure \ref{fig:co}

\begin{figure}
\labellist
\small
\pinlabel $D$ [l] at 48 167
\pinlabel $D$ [r] at -1 167
\pinlabel $N$ [l] at 104 167
\pinlabel $N$ [tr] at 25 287
\pinlabel $D$ [tl] at 95 282
\pinlabel $N$ [bl] at 45 300
\pinlabel $N$ [br] at 41 40
\pinlabel $D$ [bl] at 72 34
\pinlabel $e_k$ [r] at 8 128
\pinlabel $v_j$ [l] at 67 79
\pinlabel $v_i$ [l] at 67 241
\pinlabel $\tilde{v}$ [r] at 262 160
\pinlabel $D$ [tr] at 233 207
\pinlabel $N$ [tl] at 315 215
\pinlabel $D$ [bl] at 253 220
\pinlabel $D$ [tl] at 316 142
\pinlabel $N$ [br] at 245 109
\pinlabel $D$ [bl] at 284 88
\endlabellist
\centerline{\includegraphics[scale=.5]{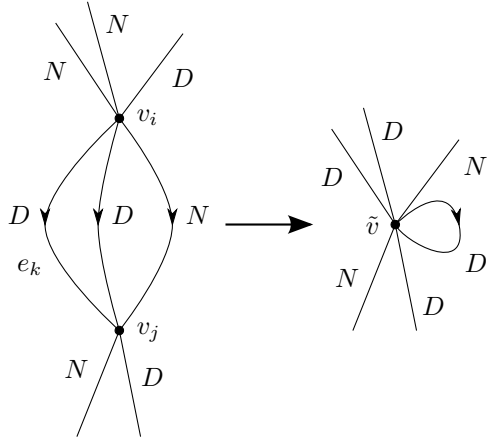} }
\caption{Contracting a ruling graph along the edge $e_k$.
}
\label{fig:co}
\end{figure}

\begin{lemma} \label{lem:contract}  If $\Gamma'$ is obtained from $\Gamma$ by contraction along an edge with $s$ edges deleted during the contraction, then
\[
Z(\Gamma) \cong (\F^\times)^{s-1}\times Z(\Gamma')
\]
as affine varieties.
\end{lemma}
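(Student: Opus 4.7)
The plan is to construct an explicit isomorphism $\Phi \colon Z(\Gamma) \to (\F^\times)^{s-1} \times Z(\Gamma')$ by using the disk equation at $v_i$ to eliminate the variable $x_k$ attached to the contracting edge $e_k$. The key observation is that $P_i$ is monomial of degree $\pm 1$ in $x_k$, since the factor $y_k$ contributed by $e_k$ is $\pm x_k^{\pm 1}$; because $w_i \in \F^\times$ and all remaining variables lie in $\F^\times$, $P_i$ admits a unique monomial solution for $x_k$ in terms of the other variables. Thus $\Phi$ is defined by projecting out $x_k$, sending the $s - 1$ coordinates for the other same-type edges between $v_i$ and $v_j$ to the factor $(\F^\times)^{s-1}$, and sending the remaining variables to coordinates on $Z(\Gamma')$ (after applying the substitution $x_\ell \mapsto x_\ell^{-1}$ on edges incident to $v_i$ in the case $\alpha(e_k) = D$, to match the type-switch prescribed in step (3) of the contraction procedure).

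To show $\Phi$ is well defined, I would verify that its image satisfies every disk equation of $\Gamma'$. The equations $P_m$ at vertices $m \neq i,j$ are unaffected, modulo the $x_\ell \mapsto x_\ell^{-1}$ substitution which exactly compensates for the type-swap of any edge incident to $v_i$. For the new equation $\tilde{P}$ at $\tilde{v}$, the key computation is an algebraic combination of $P_i$ and $P_j$. In the case $\alpha(e_k) = N$, I would use $P_i \cdot P_j = w_i w_j$: each of the $s$ removed $N$-edges between $v_i$ and $v_j$ contributes $(-x_\ell)(x_\ell^{-1}) = -1$, the $D$-edges between $v_i$ and $v_j$ contribute exactly the predicted loop factor $-x_\ell^2$, and all other factors reproduce $\mathrm{LHS}(\tilde{P})$. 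We obtain $\mathrm{LHS}(\tilde{P}) = (-1)^{-s} w_i w_j = (-1)^s w_i w_j = \tilde{w}$. In the case $\alpha(e_k) = D$, the analogous role is played by $P_j / P_i$: the $s$ removed $D$-edges each contribute $-1$, surviving $N$-edges between $v_i$ and $v_j$ produce loop factors $-x_\ell^2$ after the inversion substitution, and the inversion accounts for the prescribed type-change on edges with exactly one endpoint at $v_i$. In both cases the $s-1$ extra same-type variables cancel out of $\tilde{P}$ and hence become free coordinates in $(\F^\times)^{s-1}$.

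For the inverse $\Phi^{-1}$, given a point in $(\F^\times)^{s-1} \times Z(\Gamma')$, I would reconstruct $x_k$ as the unique monomial solution of $P_i$; this is a regular map because it involves only multiplication, inversion, and negation of $\F^\times$-valued coordinates. It remains to verify $P_j$ at the reconstructed point, which follows from $\tilde{P}$ combined with $P_i$ via the algebraic identity above, since every intermediate factor lies in $\F^\times$ and can be inverted. The main obstacle is the delicate case analysis for $\alpha(e_k) = D$, where the interplay between the type-switching at $v_i$-incident edges and the inversion substitution must be tracked. This is made manageable by first invoking Lemma~\ref{lem:orient} to reorient every edge at $v_i$ to point away from $v_i$: with this normalization $P_i$ reduces to $(-1)^{d_i}\prod_{e_\ell \text{ at } v_i} x_\ell = w_i$ where $d_i$ is the degree of $v_i$, which is trivially monomial and solvable for $x_k$, and the tracking of factors in $P_j$ becomes a direct check.
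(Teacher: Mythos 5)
Your proposal is correct and follows essentially the same route as the paper's proof: both use Lemma \ref{lem:orient} to normalize orientations, eliminate the contracted edge's variable via the (monomial-in-$x_k$) equation $P_i$, combine $P_i$ with $P_j$ to recover the merged vertex's equation up to inverting the variables of edges that become loops, and observe that the remaining $s-1$ deleted edges contribute free $\F^\times$ coordinates. The only (cosmetic) difference is your choice to orient all edges at $v_i$ away from $v_i$ rather than into it, which is what makes your rule of inverting the variables of all $v_i$-incident edges in the type-$D$ case consistent, whereas the paper's orientation convention requires inverting only the surviving between-edges and loops.
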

\begin{proof}  In view of Lemma \ref{lem:orient}, we may assume that all edges of $\Gamma$ connecting $v_i$ to $v_j$ are oriented from $v_i$ to $v_j$, and also that any other edges with a single boundary point on $v_i$ or $v_j$ are oriented into $v_i$ or into $v_j$, respectively.  For ease of notation, assume the edges from $v_i$ to $v_j$ are labeled $e_1, \ldots, e_l$ with the contraction along $e_1$.    

\begin{itemize}
\item[Case 1:]  Edges $e_1, \ldots, e_s$ have type $D$, and edges $e_{s+1}, \ldots, e_l$ have type $N$ where $1 < s \leq l$.  

The disk equations corresponding to the edges $v_i$ and $v_j$ have the form 
\begin{equation} \tag{$P_i$}
 (-x_1)\cdots(-x_l) a = w_i,  \mbox{ and }  
\end{equation}
\begin{equation}
\tag{$P_j$} (x_1 \cdots x_s) (x_{s+1} \cdots x_l)^{-1} b = w_j,    
\end{equation}
where $a$ and $b$ denote the product of the remaining terms corresponding to edges with endpoints at precisely one of $v_i$ and $v_j$. 

In this case, we solve for $x_1$ in ($P_i$) and substitute the result into ($P_j$) to produce equivalent equations ($P_i'$) and ($P_j'$), respectively:
\begin{equation} \tag{$P_i'$}
x_1 = (-1)(-x_2^{-1}) \cdots (-x_s^{-1}) (-x_{s+1}^{-1})\cdots (-x_{l}^{-1}) a^{-1} w_i.
\end{equation}
\begin{equation} \tag{$P_j'$}
(-x_{s+1}^{-2}) \cdots (-x_l^{-2}) a^{-1} b = (-1)^s w_i^{-1} w_j
\end{equation}
After contracting the edge $e_1$, $v_i$ and $v_j$ are merged into a single vertex whose disk equation is $P_j'$ with the change of variable $\tilde{x}_j = x_j^{-1}$ for  $s+1 \leq j \leq l$ and for any $j$ that correspond to loop terms appearing in $a$.  The disk equations for other vertices of $\Gamma'$ are identical to equations for the corresponding vertices of $\Gamma$.  Note that the variables, $x_1, \ldots, x_s$, do not appear in disk equations of $\Gamma'$.  Therefore, a solution of $P(\Gamma)$ is determined by a solution of $P(\Gamma')$ together with an arbitrary choice of $x_2, \ldots, x_s \in \mathbb{F}^\times$ (which uniquely specifies the value of $x_1$ according to $P_i'$).  Thus, changing variables as prescribed above and projecting out the $x_1$ coordinate provides the desired isomorphism.  (That the inverse is also a regular map follows since we have written $x_1$ as a Laurent polynomial in the remaining $x_i$ and $t_i$.) 
  
\item[Case 2:]  Edges $e_1, \ldots, e_s$ have type $N$, and edges $e_{s+1}, \ldots, e_l$ have type $D$ where $1 < s \leq l$.

The procedure is similar.  In the disk equations for $\Gamma$, solve for $x_1$ in $P_i$, and substitute the result into $P_j$.  After the substitution, the variables $x_1,\ldots, x_s$ cancel in $P_j$, and this time the resulting equation is precisely the equation corresponding to the vertex $\tilde{v}$ in the disk equations for $\Gamma'$.  Again we have a correspondence between solutions of $P(\Gamma)$ and solutions of $P(\Gamma')$ together with arbitrary choices of $x_2,\ldots, x_s$, and this completes the proof.
\end{itemize} 
\end{proof}

With this preparation we now give a computation of the solution set $Z_\rho$.

\begin{theorem} \label{thm:diskEq}
Let $\rho$ be an $m$-graded normal ruling of a front diagram $D$.  The affine algebraic set $Z_\rho \subset (\F^\times)^c \times (\F^\times)^n \times \F^{r}$ in bijection with $m$-graded SR-form MCSs of $D$ compatible with $\rho$ satisfies
\[
Z_\rho \cong (\F^\times)^{j(\rho) + c} \times \F^{r}
\]
where $j(\rho) = \#\mbox{switches} - \#\mbox{right cusps}$; $c$ is the number of components of $L$; and $r$ is the number of $m$-graded returns (resp. sum of the number of returns and number of right cusps) when $m \neq 1$ (resp. when $m=1$). 
\end{theorem}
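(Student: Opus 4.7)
The plan is to combine equation (\ref{eq:solutions}) with repeated application of the contraction lemma (Lemma \ref{lem:contract}) to reduce the computation of $Z_\rho$ to an easily analyzed single-vertex situation. Since $Z_\rho \cong Z(\Gamma_\rho) \times \F^r$ by (\ref{eq:solutions}), it suffices to prove $Z(\Gamma_\rho) \cong (\F^\times)^{n - N + c}$, where $n$ is the number of switches and $N$ the number of disks (right cusps) of $\rho$, so that $n - N = j(\rho)$.

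First I establish that the number of connected components of $\Gamma_\rho$ equals $c$. The input is Remark \ref{rem:RP}: after resolving all switches, the link has exactly one unknot per disk, and adding back a switch merges two components of the link precisely when it is a spanning-forest edge of $\Gamma_\rho$; a standard count then gives $c$ as the number of connected components of $\Gamma_\rho$. I fix a spanning forest of $\Gamma_\rho$ with $N - c$ edges and apply Lemma \ref{lem:contract} along each of its edges in turn, which is legitimate because spanning-forest edges always remain non-loops during the contraction process. The resulting ruling graph $\Gamma'$ has exactly $c$ vertices and its only edges are loops. Letting $\ell$ denote the total number of loops in $\Gamma'$, summing the $s_i - 1$ contributions of Lemma \ref{lem:contract} over the $N - c$ contractions gives an accumulated factor of $(\F^\times)^{(n - \ell) - (N - c)}$, since $\sum s_i$ equals the total number $n - \ell$ of edges deleted by the contractions.

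Next I analyze $Z(\Gamma')$ directly. Each of the $c$ final vertices receives a label obtained by iterating the contraction rule on the labels of the vertices in the corresponding component of $\Gamma_\rho$. Since each component of $\Gamma_\rho$ contains exactly one marked vertex (with label $-t_{k_i}$) and all other vertices are labeled $-1$, a case analysis on the two contraction types ($D$ and $N$) shows that each final label has the form $\epsilon_i \, t_{k_i}^{\pm 1}$ with $\epsilon_i \in \{+1,-1\}$. The disk equation at the $i$-th final vertex with $\ell_i$ loops then reads
\begin{equation*}
(-1)^{\ell_i} x_1^2 \cdots x_{\ell_i}^2 = \epsilon_i \, t_{k_i}^{\pm 1},
\end{equation*}
which uniquely determines $t_{k_i}$ as a Laurent monomial in $x_1, \ldots, x_{\ell_i} \in \F^\times$, so the solution set at this vertex is isomorphic to $(\F^\times)^{\ell_i}$ via projection onto the loop variables. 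Multiplying over the $c$ final vertices gives $Z(\Gamma') \cong (\F^\times)^\ell$, and combining with the earlier factor yields $Z(\Gamma_\rho) \cong (\F^\times)^{(n - \ell) - (N - c)} \times (\F^\times)^\ell = (\F^\times)^{n - N + c}$, as required.

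The main obstacle is the label bookkeeping in the final step: one must verify that the exponent of $t_{k_i}$ in each final label is always exactly $\pm 1$, and that no other $t_j$ appears. Both points follow from the fact that each component of $\Gamma_\rho$ contains exactly one marked vertex, together with a direct inspection of the two contraction rules $\tilde{w} = (-1)^s w_i^{\pm 1} w_j$, but must be tracked carefully since type-$D$ contractions can flip the sign of the $t$-exponent.
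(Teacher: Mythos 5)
Your overall strategy (reduce to $Z(\Gamma_\rho)$ via equation (\ref{eq:solutions}), then contract along a spanning forest using Lemma \ref{lem:contract} and analyze the residual single-vertex equations) is the same as the paper's, but there is a genuine gap in the step where you claim that the number of connected components of $\Gamma_\rho$ equals $c$, equivalently that each connected component of $\Gamma_\rho$ contains exactly one marked vertex. This is false for multi-component links. Concretely, take the maximal-$tb$ Hopf link front with left cusps creating strands $1,2$ (component $A$) above strands $3,4$ (component $B$), two crossings between positions $2$ and $3$, and right cusps pairing $1$-$2$ and $3$-$4$. The normal ruling with disks $D_1 = \{1,2\}$, $D_2=\{3,4\}$ has both crossings as $(S1)$ switches, so $\Gamma_\rho$ has two vertices joined by two type-$D$ edges: it is connected ($\kappa=1$) while $c=2$, and its single component carries two marked vertices with labels $-t_1$ and $-t_2$. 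The underlying problem is that a disk boundary can jump between link components at a switch, so the surjection from link components to components of $\Gamma_\rho$ need not be injective, and Remark \ref{rem:RP} only gives the inequality $c \geq N-n$, not the identification of merges with spanning-forest edges. In this example your terminal analysis would give $(\F^\times)^{0}$ at the merged vertex (no loops) instead of the correct $(\F^\times)^{1}$ coming from the equation $1 = t_1^{-1}t_2$, and the total count would come out wrong by a factor of $(\F^\times)^{b_i-1}$ for each graph component containing $b_i>1$ marked vertices.

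The fix is exactly what the paper does: work component by component, letting $b_i\geq 1$ denote the number of marked vertices (equivalently, of $t$-variables) in the $i$-th connected component $\Gamma_i$, prove $Z(\Gamma_i)\cong (\F^\times)^{|E_i|-|V_i|+b_i}$ by induction on $|V_i|$ using Lemma \ref{lem:contract}, and in the base case solve the single disk equation $\pm x_1^2\cdots x_{\ell}^2 = \pm t_{k_1}^{\pm1}\cdots t_{k_{b}}^{\pm1}$ for one $t$-variable, leaving the other $b-1$ of them (and the loop variables) free. Summing $\sum_i b_i = c$ over components then recovers $(\F^\times)^{|E|-|V|+c}=(\F^\times)^{j(\rho)+c}$. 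Your label bookkeeping for the exponents (each $t_k$ appearing with exponent exactly $\pm1$ throughout, since type-$D$ contractions only invert it) is correct and carries over to this more general base case; only the "single $t$ per final vertex" assumption needs to be dropped.
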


\begin{proof}
Let $\Gamma = (V,E)$ be a connected ruling graph with vertices labeled by invertible elements of $\F[t^{\pm1}_1, \ldots, t^{\pm1}_b]$ with $b\geq 1$, so that the product of labels from all the vertices has the form $\pm t_1^{\pm1} \cdots t_b^{\pm1}$.  We show by induction on $|V|$ that  
\begin{equation} \label{eq:bcase}
Z(\Gamma) \cong (\F^\times)^{|E| - |V| + b}. 
\end{equation}
In the base case, $|V|= 1$, we can solve the single disk equation for one of the $t_i$ that appears with non-zero exponent in the label of $v_1$.  Solutions are then uniquely specified by arbitrary values of $t_j$ and any edge variables $x_j$ (which correspond to loops), so the result follows.  The inductive step follows easily from contracting along an edge and applying Lemma \ref{lem:contract}.

Now, consider the ruling graph, $\Gamma_{\rho} = (V, E)$, that is associated to $\rho$, and write $\Gamma_1, \ldots, \Gamma_N$ for the connected components of $\Gamma_{\rho}$.  When forming the disk equations for $\Gamma_j$ only include those $t_i$ variables that correspond to base points of $L$ located on ruling disks that form the vertices of $\Gamma_j$.  With this caveat, the solution set $Z(\Gamma_{\ruling})$ is the product $Z(\Gamma_1) \times \cdots \times Z(\Gamma_N)$.  In addition, note that the disks that form the vertices of each of the $\Gamma_i$ have as their union some number of components of the link $D$.  Therefore, up to reordering the $t_i$,  the product of labels of the vertices of each $\Gamma_i$ satisfies the hypothesis needed for (\ref{eq:bcase}), and applying (\ref{eq:bcase}) gives
\[
Z(\Gamma_\rho) \cong Z(\Gamma_1) \times \cdots \times Z(\Gamma_N) \cong (\F^\times)^{|E| - |V| + c} \cong (\F^\times)^{j(\rho) + c}.
\]
Finally, the result follows from the observation in equation (\ref{eq:solutions}).
\end{proof}

% ********************
\section{A-form MCSs and Augmentations}
\mylabel{s:AFormAndAugs}

We now introduce a second special class of MCSs known as A-form MCSs.  The name derives from a close relationship between A-form MCSs and augmentations, and in fact a bijective statement is given in Theorem \ref{thm:AugAformbijection}.  We prove this result over a general commutative ring as, for instance, the case $R = \Z$ may be of some independent interest. 

Let $\front$ be a front diagram for a Legendrian link $L$ that is provided with a Maslov potential and fixed orientation.  As usual,  $m$ is a divisor of $2r(L)$, and  we say that a crossing $q$ is $m$-graded if $|q| =0$ mod $m$. 
\begin{definition}
\mylabel{defn:A-form}
Suppose $\MCS$ is an $m$-graded MCS over $R$ for $\front$.  We say that $\MCS$ is in \textbf{A-form} if all left cusps are simple and the only handleslides of $\MCS$ are arranged as follows. 
\begin{enumerate}
	\item Immediately to the left of each $m$-graded crossing there is a handleslide connecting the two crossing strands.  The coefficient may be an arbitrary element of $R$. 
	\item If $m =1$, then handleslides also appear to the left of right cusps with endpoints on the two strands that meet at the cusp.
\end{enumerate}
\end{definition}
Figure \ref{fig:AFormHandle} illustrates the locations of handleslide marks in an A-form MCS.  

\begin{figure}[t]
\labellist
\small\hair 2pt
\pinlabel {$b_1$} [b] at 160 185
\endlabellist
\centering
\includegraphics[scale=.5]{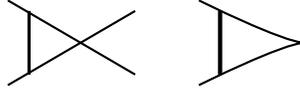}
\caption{The handleslides of an A-form MCS are located to the left of crossings and, when $m=1$, right cusps.}
\label{fig:AFormHandle}
\end{figure}

We let $\mathit{MCS}_m^{A}(\front;R)$ denote the set of all $m$-graded A-form MCSs over $R$ for the front diagram $\front$. Recall that we have denoted by $\overline{Aug}_m(D;R)$ the set of $m$-graded augmentations of the Chekanov-Eliashberg algebra, $(\A(D),\partial)$.

\begin{theorem}
\mylabel{thm:AugAformbijection}
If $D$ is nearly plat, then there exists a bijection $\Theta: \overline{Aug}_m(D;R) \to MCS_m^{A}(D;R)$.
\end{theorem}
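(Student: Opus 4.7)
The plan is to realize both sides of the purported bijection as the same underlying data and then to show that the validity conditions on each side coincide. By Proposition~\ref{prop:simple}, an A-form MCS is determined by its handleslide set, and such a set amounts to a choice of an element of $R$ at each $m$-graded crossing of $D$ together with, when $m=1$, an element of $R$ at each right cusp. Since every right cusp has degree $1$, this is the same data as the collection of values $\{\varepsilon(q_j)\}$ of an $m$-graded augmentation $\varepsilon$ on the crossing and right cusp generators of $\A(D)$. I would define $\Theta(\varepsilon)$ by placing a handleslide of coefficient $\varepsilon(q)$ to the left of each such generator $q$ and then argue that the resulting handleslide set yields a valid MCS precisely when $\varepsilon$ is an augmentation.

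By Proposition~\ref{prop:H}, the MCS condition on the inductively constructed complexes $\{(C_l, d_l)\}$ reduces to one scalar equation in $R$ just before each crossing and right cusp of $D$: $\langle d_l e_k, e_{k+1}\rangle$ is required to equal $0$ at a crossing, $-1$ at an unmarked right cusp, and $-s_i$ at the marked right cusp on the $i$-th component for some unit $s_i \in R^{\times}$ (which will be identified with $\varepsilon(t_i)$). Many of these equations hold automatically from the $m$-graded structure of $d_l$, the non-trivial content appearing only at generators $q$ with $|q| \equiv 1 \pmod{m}$. The heart of the proof is a disk-counting lemma asserting that, for each $l$ and each pair of strands $i < j$ at $x_l$, the coefficient $\langle d_l e_i, e_j \rangle$ is a signed sum, weighted by products of $\varepsilon$-values and appropriate monomials in $t_1^{\pm 1}, \ldots, t_c^{\pm 1}$, over immersed disks in the Lagrangian projection that terminate along strands $i$ and $j$ at $x_l$ and whose remaining corners occupy negative Reeb quadrants of the crossings and right cusps of $D$ lying to the left of $x_l$. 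This lemma would be proved by induction on the features of $D$ encountered from left to right, using Definition~\ref{defn:MCS}(5) to compute the local change in each coefficient. The key point is that inserting a handleslide of coefficient $\varepsilon(q)$ just before a crossing $q$ modifies the differential in exactly the way that mirrors the addition of a new corner at $q$ in the disk count.

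Once the disk-counting lemma is established, $\langle d_l e_k, e_{k+1}\rangle$ just before a crossing or right cusp $q$ at $x_l$ is, up to the universal contribution of the small bigon produced near $q$ by Ng's resolution construction, equal to $\varepsilon(\partial q)$. The MCS conditions then translate to $\varepsilon \circ \partial = 0$, establishing the bijection at once in both directions: $\Theta(\varepsilon)$ is a valid A-form MCS precisely when $\varepsilon$ is an augmentation, with the inverse map reading off handleslide coefficients and the MCS-assigned values $s_i$ to reconstruct $\varepsilon(q)$ and $\varepsilon(t_i)$, respectively.

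The main obstacle is setting up and verifying the disk-counting lemma with the correct signs and $H_1$-weights. One must reconcile the Reeb and orientation sign conventions of Section~\ref{ss:CE-DGA} with the triangularity conventions of Definition~\ref{defn:MCS}, track base point intersections through the left-to-right evolution of the MCS, and verify the induction step separately at each of the four elementary tangle types. The nearly plat hypothesis plays a useful role here by ensuring that crossings and cusps are sufficiently separated so that each induction step involves a single feature in isolation.
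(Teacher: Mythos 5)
Your proposal follows essentially the same route as the paper: define $\Theta$ by placing handleslides with coefficients given by augmentation values, reduce the MCS validity conditions to the scalar equations of Proposition~\ref{prop:H}, and prove a left-to-right inductive disk-counting lemma (the paper's Lemma~\ref{lem:Cndn}) identifying $\langle d_l e_k, e_{k+1}\rangle$ with $\varepsilon(\partial q)$ up to the exceptional bigon at right cusps. The one detail you have deferred but correctly flagged is that the handleslide coefficient must be $\alpha_i\,\varepsilon(q_i)$ with an orientation sign $\alpha_i$ (not just $\varepsilon(q_i)$), so that the weight of each half-disk matches the orientation sign $\epsilon_i=\alpha_i\beta_i$ of the corresponding corner term in $\partial q$ disk by disk.
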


\begin{remark}
The nearly plat assumption shortens the proof although we do not expect that it is necessary.  Without it, some additional care should be taken at right cusps.
\end{remark}

\begin{proof}

Let $\varepsilon: \A \rightarrow R$ be a ring homomorphism, and let  $q_1, \ldots, q_N$ denote the $m$-graded crossings of $D$ together with the right cusps in the case $m=1$.  Suppose that we define a handleslide set, $H$, arranged as in the definition of an A-form MCS by taking the coefficient $\lambda_i$ corresponding to the handleslide associated to $q_i$ to be
\begin{equation} \label{eq:lambdai}
\lambda_i = \alpha_i \varepsilon(q_i)
\end{equation}
 where when $q_i$ is a crossing, $\alpha_i$ is $1$ if the understrand of $q_i$ is oriented left and $-1$ if it is oriented to the right; see Figure \ref{fig:ab}. When $q_i$ is a right cusp take $\alpha_i=1$.    Moreover, we assign signs $\ell_1, \ldots, \ell_c$ to marked right cusps as indicated in Figure \ref{fig:rcsign}.

\begin{claim} \label{claim:Aform}  Then, $H$ is the handleslide set of an A-form MCS that assigns values $s_1^{\ell_1}, \ldots, s_c^{\ell_c} \in R^\times$ to marked points $*_1, \ldots, *_c$ if and only if $\varepsilon$ is an augmentation of $(\A(D), \partial)$ with $\varepsilon(t_i) = s_i$, $1 \leq i \leq c$. 
\end{claim}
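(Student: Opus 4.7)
The plan is to verify directly, working from left to right across $D$, that the inductive constraints of Proposition~\ref{prop:H} for the handleslide set $H$ translate exactly into the augmentation equations $\varepsilon(\partial q_i) = 0$ at every $m$-graded crossing, into the condition that right cusps have $\varepsilon$ of their boundary equal to $-1$ (or the specified unit at marked cusps), and, when $m=1$, into the augmentation condition at right cusps regarded as crossings of degree $0$. The simple-left-cusp hypothesis together with Proposition~\ref{prop:simple} means the complexes $(C_l,d_l)$ are uniquely forced from $H$, so only the three conditions of Proposition~\ref{prop:H} must be checked.

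First, I would interpret the coefficients $\langle d_l e_a, e_b\rangle$ combinatorially. Tracking the effect of Definition~\ref{defn:MCS}(5a),(5b),(5d) through the induction, one obtains that $\langle d_l e_a, e_b\rangle$ equals a signed sum, weighted by products of handleslide coefficients and of the basepoint values $s_i^{\ell_i}$, over ``admissible paths'' from strand $a$ to strand $b$ at $x=x_l$ that may turn corners only at crossings or right cusps of $D$ that lie to the left of $x_l$. With the substitution $\lambda_i = \alpha_i \varepsilon(q_i)$, each corner contributes exactly the factor $\alpha_i\varepsilon(q_i)$, so that the signed path count matches (up to a universal sign determined by the orientations of the bounding strands) the image under $\varepsilon$ of the part of the Chekanov--Eliashberg differential of the corresponding broken capping disk in Ng's resolution of $D$.

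Second, I would apply this identification at the three places where Proposition~\ref{prop:H} imposes constraints. Just after the handleslide preceding an $m$-graded crossing $q_i$, one gets $\langle d_l e_k, e_{k+1}\rangle = \lambda_i + (\text{contribution of paths with no corner at $q_i$})$; requiring this to be zero is, after stripping the sign $\alpha_i$, exactly the equation $\varepsilon(q_i + (\partial q_i - q_i)) = 0$, i.e.\ $\varepsilon(\partial q_i)=0$. At an unmarked right cusp, the path count reproduces $\varepsilon(\partial c)$ (where $c$ denotes the cusp generator) shifted by the constant term $1$ coming from the trivial capping disk, and the required value $-1$ becomes $\varepsilon(\partial c) = 0$. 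At a marked right cusp the same computation gives $-s_i^{\ell_i}$ on the MCS side and $-\varepsilon(t_i)^{\ell_i}$ on the DGA side (the exponent $\ell_i$ arising precisely because of the orientation convention fixing $\ell_i$), whence $\varepsilon(t_i) = s_i$. The nearly plat hypothesis is used here so that each right cusp's local model is standard and no stray crossings obstruct this matching.

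The main obstacle is the sign bookkeeping. I expect the heart of the work to be verifying that the orientation signs of Section~\ref{ss:CE-DGA} (Reeb signs, orientation signs at quadrants, the global sign $\epsilon'$ from the initial arc, and the basepoint signs $\ell_i$) align exactly with the signs produced by iterating the handleslide-induced change of basis $\varphi(e_j) = e_j - re_k$ through the tangle and resolving right cusps per Definition~\ref{defn:MCS}(5c). The factors $\alpha_i$ are precisely introduced to absorb the discrepancy between the understrand-orientation-dependent signs in $\partial$ and the strand-ordering conventions of the MCS differential, so the key calculation is checking that this single sign choice suffices uniformly for all four possible crossing orientations in a nearly plat diagram.
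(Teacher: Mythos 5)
Your overall strategy is the one the paper uses: reduce to the three conditions of Proposition~\ref{prop:H} via Proposition~\ref{prop:simple}, prove by left-to-right induction that $\langle d_l e_i,e_j\rangle$ is a signed count of half-disks (your ``admissible paths'' are exactly the boundaries of the half-disks in the paper's $\Delta(x_l,i,j;b_1,\ldots,b_n)$, weighted by $v(f)=\prod\beta\lambda$), and then match these counts against $\varepsilon\circ\partial$ at crossings and right cusps, with the exceptional constant/$t_i^{\ell_i}$ disk accounting for the $-1$ resp.\ $-s_i^{\ell_i}$ at cusps. Your treatment of the right cusps and of the role of $\alpha_i$, $\beta_i$, $\ell_i$ matches the paper's.

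There is, however, one concretely wrong step in your crossing case. You assert that just after the handleslide preceding $q_i$ one has $\langle d_l e_k,e_{k+1}\rangle=\lambda_i+(\text{paths with no corner at }q_i)$, and that the vanishing condition is $\varepsilon\bigl(q_i+(\partial q_i-q_i)\bigr)=0$. Neither half of this is right: the handleslide between strands $k$ and $k+1$ changes $d e_k$ by $\lambda_i\, d e_{k+1}$, whose $e_{k+1}$-component is zero, so $\lambda_i$ does \emph{not} enter $\langle d_l e_k,e_{k+1}\rangle$ at $q_i$; and $\partial q_i$ contains no linear $q_i$ term, so there is no cancellation of the kind you describe. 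The correct statement is that the half-disk count immediately to the left of $q_i$ is already $\pm\varepsilon(\partial q_i)$ on the nose (each negative corner $b_j=q_{m_j}$ contributing $\beta_j\lambda_{m_j}=\beta_j\alpha_{m_j}\varepsilon(q_{m_j})=\epsilon_j\varepsilon(q_{m_j})$), via the bijection that caps each half-disk off with a positive corner in the left quadrant of $q_i$ (using that Ng's resolution admits no disks with positive corner in the right quadrant). The coefficient $\lambda_i$ instead does its work at crossings and cusps \emph{to the right} of $q_i$, where $q_i$ occurs as a negative corner. This is a repairable bookkeeping error rather than a fatal one, but as written the key identification at crossings would not compute out the way you claim.
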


The claim clearly produces the desired bijection. Its proof requires some preliminary considerations, including Lemma \ref{lem:Cndn} below. 

 Let $\Delta(x_l, i, j ; b_1, \ldots, b_n)$ denote a set of disks up to equivalence  as in the definition of the Chekanov-Eliashberg algebra and subject to the following restrictions.  The image of a disk in $\Delta(x_l, i, j ; b_1, \ldots, b_n)$ is required to lie in the half space of the $xy$-plane where $x \leq x_l$.  Boundary punctures appear in counter-clockwise order along $\partial D^2$ at $z_{-1},z_0, \ldots, z_n$, and the arc of $\partial D^2$ between $z_{-1}$ and $z_0$ maps to the vertical line segement between the $j$ and $i$ strands at $x = x_l$.  The remaining components of $\partial D^2 \setminus \{z_{-1},z_0, \ldots, z_n\}$ map to the Lagrangrian projection of $L$, and neighborhoods of $z_1, \ldots, z_n$ map to negative quadrants at $b_1, \ldots, b_n$ respectively.  See Figure \ref{fig:HalfDisk}.

\begin{figure}[t]
\centering
\[
\begin{array}{rcrc}   
 &\includegraphics[scale=.6]{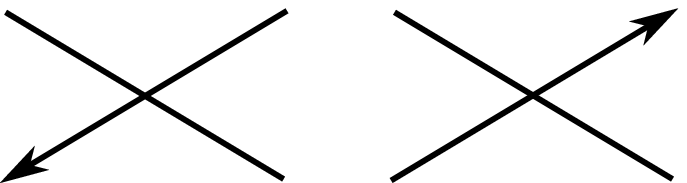} & \quad \quad \quad  & \includegraphics[scale=.6]{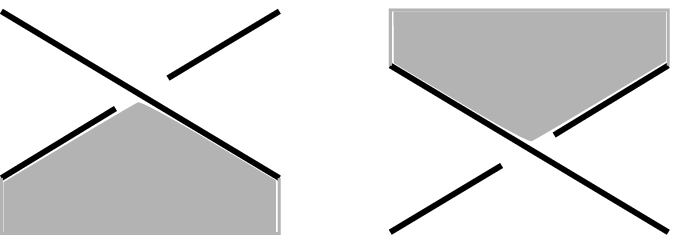} \\
 \alpha_i= & +1 \quad \quad \,\, \quad \quad -1   & \beta_i= & +1 \quad \quad \,\, \quad \quad -1  
\end{array}
\]
\caption{The signs $\alpha_i$ and $\beta_i$ used in the proof of Theorem \ref{thm:AugAformbijection}.   Note that the product of the $\alpha$ sign and $\beta_i$ is the orientation sign $\epsilon_i$ associated to a negative corner of a disk in the definition of the Chekanov-Eliashberg algebra.}
\label{fig:ab}
\end{figure}

Given a disk $f \in \Delta(x_l, i, j ; b_1, \ldots, b_n)$, we let 
\begin{equation} \label{eq:vf}
v(f) = (\beta_1 \lambda_{m_1}) \cdots (\beta_n \lambda_{m_n})
\end{equation}
where $\lambda_{m_l}$ is the coefficient of the handleslide to the left of $b_l$ and $\beta_l$ is $+1$ (resp. $-1$) if a neighborhood of $z_l$ maps to the lower (resp. upper) quadrant of $b_l$; see Figure \ref{fig:ab}.

\begin{figure}[t]
\labellist
\small\hair 2pt
\pinlabel {$b_1$} [b] at 160 185
\pinlabel {$b_2$} [b] at 45 190
\pinlabel {$b_3$} [t] at 116 33
\pinlabel {$x=x_l$} [t] at 236 -4
\pinlabel {$i$} [l] at 240 152
\pinlabel {$j$} [l] at 240 64
\endlabellist
\centering
\includegraphics[scale=.5]{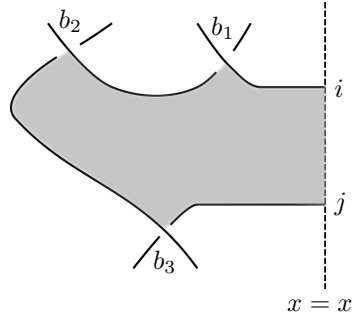}
\caption{A disk in $\Delta(x_l,i,j;b_1, b_2, b_3)$.}
\label{fig:HalfDisk}
\end{figure}

As left cusps of an A-form MCS are simple, Propositions \ref{prop:simple} and \ref{prop:H} apply, and we can attempt to inductively construct the complexes $(C_l,d_l)$ of an A-form MCS associated to the handleslide set $H$.  

\begin{lemma} \label{lem:Cndn}
Suppose the sequence of complexes $(C_k,d_k)$ associated to $H$ can be defined for all $x_k$ with $k \leq l$ and that $x_l$ is immediately to the right of a crossing or left cusp. 
Then, the differential in the complex $(C_l,d_l)$ satisfies
\begin{equation} \label{eq:dnei}
\langle d_l e_i, e_j\rangle = \sum_{ f \in \Delta(x_l, i, j ; b_1, \ldots, b_n)} v(f).
\end{equation}

\end{lemma}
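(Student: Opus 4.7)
The approach is induction on $l$, using the left-to-right construction of $(C_l,d_l)$ from Proposition \ref{prop:simple}, and showing that at each step both sides of (\ref{eq:dnei}) transform compatibly as $x_l$ moves past a single feature of the decomposition. At the leftmost $l$, $C_l$ is trivial and $\Delta(x_l,i,j;\ldots)$ is empty, so the identity holds vacuously. For the inductive step, I would process the intermediate features one at a time and verify that both sides are updated identically.

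The left cusp case is immediate: simplicity forces $d_l e_k = e_{k+1}$ with no other new entries, matched by the unique small half-bigon at the cusp tip; other matrix entries and other half-disks are inherited unchanged from the previous $x_l$. The right cusp case is dual, with item (5c) of Definition \ref{defn:MCS} quotienting by the acyclic subcomplex spanned by the cusp strands, and correspondingly discarding those half-disks whose initial arc sits on the canceled strands. A crossing between strands $k, k+1$ with no preceding handleslide merely swaps basis vectors via the transposition $\sigma = (k, k+1)$, and the half-disks are reindexed by $\sigma$, so both sides update identically.

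The core case is a crossing $q$ between strands $k, k+1$ preceded in an A-form MCS by a handleslide of coefficient $\lambda_q$ (an $m$-graded crossing, or a right cusp when $m=1$). Here $d_l$ is updated by the handleslide isomorphism of Definition \ref{defn:MCS}(5b) followed by the swap induced by the crossing. I would classify each half-disk $f \in \Delta(x_{l'}, i, j; b_1, \ldots, b_n)$ by the interaction of its boundary with $q$: either (i) $f$ avoids a small neighborhood of $q$ and corresponds to a half-disk in $\Delta(x_l, \sigma(i), \sigma(j); b_1, \ldots, b_n)$; (ii) its boundary passes through the handleslide strand, contributing an extra factor $\beta\lambda_q$ to $v(f)$; or (iii) $f$ has a negative corner at $q$, in which case cutting $f$ along a vertical arc immediately to the left of $q$ decomposes $f$ into a half-disk ending at $x_l$ together with an elementary triangle at $q$. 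Summing $v(f)$ across these classes matches the three kinds of new terms in the updated $d_l$.

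The main obstacle will be the sign bookkeeping in this last case. The key identity is $\alpha_i\beta_i = \epsilon_i$, noted in the caption of Figure \ref{fig:ab}, which aligns the signs $\beta_i\lambda_{m_i}$ in (\ref{eq:vf}) with the orientation signs appearing in the Chekanov-Eliashberg differential; verifying it uniformly across all quadrant and strand-orientation configurations at $q$ is the most tedious part of the proof. Once this identity is in hand, the remainder reduces to routine chain-complex algebra, and combining the cases above completes the inductive step.
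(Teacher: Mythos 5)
Your proposal takes essentially the same route as the paper: induct from left to right, with the base case handled by the simple left cusps and the inductive step handled by matching the change in the matrix entries across a handleslide--crossing pair against bijections between the disk sets before and after the crossing. The paper records the combined effect of the handleslide isomorphism and the transposition explicitly (its list of equalities (\ref{eq:dneqs})) and then exhibits the bijections, with the disks that acquire a new negative corner at $b_0$ accounting for the $\lambda$-terms. Three imprecisions in your sketch are worth flagging. First, your classes (ii) and (iii) at a crossing are not genuinely distinct: in the definition of $v(f)$ the coefficient $\lambda_q$ enters only through a negative corner of $f$ at $q$ (contributing $\beta_0\lambda_q$ with $\beta_0=-1$, since such disks necessarily cover the upper quadrant), while a disk whose boundary merely runs past the handleslide mark without a corner at $q$ picks up no $\lambda_q$ factor at all; if (ii) were counted as a separate class the bookkeeping would double-count. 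Second, the right-cusp case never arises: $D$ is nearly plat and the lemma only concerns $x_l$ immediately to the right of a crossing or left cusp, so the induction never passes a right cusp --- which is fortunate, because ``discarding the half-disks on the canceled strands'' is not what the quotient by the acyclic subcomplex does (the induced differential acquires correction terms whenever $d_l e_k$ has components beyond $e_{k+1}$; this is exactly the ``additional care at right cusps'' the paper defers). Third, the identity $\alpha_i\beta_i=\epsilon_i$ plays no role in this lemma, since $v(f)$ involves only the $\beta$ signs and the handleslide coefficients; the $\alpha_i$ enter only later, when the coefficients are specialized to $\lambda_i=\alpha_i\varepsilon(q_i)$ in the proof of Claim \ref{claim:Aform}. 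None of these affects the validity of the overall strategy, which matches the paper's.
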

\begin{proof}
The diagram $D$ is nearly plat, and for values of $l$ with only left cusps appearing to the left of $x_l$ the equality follows from the requirement at left cusps in  Definition \ref{defn:MCS}~(4).  Now, assume that $x_l$ is to the right of a crossing, $b_0$, between strands $k$ and $k+1$
and that the Lemma is known for smaller values of $l$.  If the crossing is $m$-graded then it is preceded by a handleslide with coefficient $\lambda$.  We can simultaneously address the case that $b_0$ is not $m$-graded by treating it as if a handleslide with coefficient $0$ precedes $b_0$. Let $d'$ denote the differential prior to this handleslide, and note that (\ref{eq:dnei}) may be assumed to hold for $d'$.  As long as $\langle d' e_k, e_{k+1}\rangle =0$, the complex $(C_l,d_l)$ may be defined, and we can compute
\begin{equation}\label{eq:dneqs}
\begin{array}{lr}  \langle d_l e_i, e_j \rangle = \langle d' e_i, e_j  \rangle & \mbox{ unless $\{i,j\}\cap \{k,k+1\} \neq \emptyset$;} \\
\langle d_l e_k, e_{k+1} \rangle = 0; & \\
\langle d_l e_i, e_{k+1}\rangle = \langle d' e_i, e_{k}\rangle & \mbox{ for $i < k$;} \\
\langle d_l e_i, e_{k}\rangle = \langle d' e_i, e_{k+1} \rangle - \lambda  \langle d' e_i, e_{k}\rangle & \mbox{ for $i<k$;} \\
\langle d_l e_k, e_{j}\rangle = \langle d' e_{k+1}, e_{j}\rangle & \mbox{ for $k+1 < j$;} \\
\langle d_l e_{k+1}, e_{j}\rangle = \langle d' e_{k}, e_{j} \rangle + \lambda  \langle d' e_{k+1}, e_{j}\rangle & \mbox{ for $k+1<j$.} \\
\end{array}
\end{equation}

Notice that, except when $(i,j) = (k,k+1)$, any disk in $\Delta(x', i, j ; b_1, \ldots, b_n)$ may be extended along the Lagrangian projection of $L$ to arrive at a disk in $\Delta(x_l, \sigma(i), \sigma(j) ; b_1, \ldots, b_n)$ where $\sigma = (k \,\, k+1)$ denotes the transposition.  It is easy to see that this defines a bijection 
\[
\Delta(x_l, i, j ; b_1, \ldots, b_n) \cong \Delta(x', \sigma(i),\sigma(j); b_1, \ldots, b_n)
\]
 unless $i = k+1$; $j=k$; or $(i,j) = (k,k+1)$.  The bijection preserves $v(f)$, so in combination with the first, third, and fifth equality of (\ref{eq:dneqs}) this establishes (\ref{eq:dnei}) for such $i$ and $j$.   When $(i,j) = (k,k+1)$, we just note that $\Delta(x_l, k, k+1 ; b_1, \ldots, b_n)$ is empty.  

For the case $j=k$, we have bijections 
\[
\Delta(x_l, i,k; b_1, \ldots, b_n) \cong \Delta(x', i,k+1; b_1, \ldots, b_n) \quad \mbox{and} \quad \Delta(x_l, i,k; b_1, \ldots, b_n, b_0) \cong \Delta(x', i,k; b_1, \ldots, b_n). 
\]
where the first bijection is the one described above and the second bijection arises from extending a disk in $\Delta(x', i,k; b_1, \ldots, b_n)$ to have a negative corner at $b_0$.  The first bijection preserves $v(f)$ and the second multiplies each $v(f)$ by a factor of $-\lambda$.  The negative sign arises from   $\beta_0$ since disks constructed in this manner necessarily cover the upper quadrant of $b_0$.  Combining these observations with the fourth equality of (\ref{eq:dneqs}) establishes (\ref{eq:dnei}).  A similar argument applies in the case $i = k+1$.
\end{proof}

To prove Claim \ref{claim:Aform}, note that $\varepsilon$ is an augmentation if and only if $\varepsilon \circ \partial (q_i)= 0$ for all crossings and right cusps $q_i$.  On the other hand, Proposition \ref{prop:H} tells us that $H$ gives an A-form MCS with values $s_i^{\ell_i}$ assigned to marked points if and only if  immediately to the left of a crossing (resp. right cusp) between the strands $k$ and $k+1$ we have $\langle d_l e_k, e_{k+1} \rangle =0$ (resp.   $\langle d_l e_k, e_{k+1} \rangle$ is equal to $-1$ or $-s_i^{\ell_i}$ depending on if the cusp is unmarked or marked).  We now check that these conditions are equivalent.

Let $q_i$ be a crossing such that the complexes $(C_l,d_l)$ associated to the handleslide set $H$ can be defined to the left of $q_i$.  Take $l$ so that $x_l$ is directly to the left of $q_i$.  We claim that $\langle d_l e_k, e_{k+1} \rangle =0$  if and only if $\varepsilon \circ \partial (q_i) = 0$.   To verify, note that to any disk in $\Delta(x_l, k,k+1; b_1, \ldots, b_n)$ we can associate a disk in $\Delta(q_i; b_1, \ldots, b_n)$ by extending the disk to include a positive corner at the left quadrant of $q_i$.  It is a consequence of the form of the Lagrangian diagram arising from Ng's resolution procedure that this construction gives a bijection.  (There are no disks contributing to $\partial q_i$ with a positive corner in the right quadrant at $q_i$; see \cite{Ng2003}.)  Using this bijection, (\ref{eq:lambdai}) and (\ref{eq:vf}) allow us to compute  
\begin{equation} \label{eq:vfbaq} 
v(f) = (\beta_1 \alpha_{m_1} \varepsilon(q_{m_1}) ) \cdots
 \cdots (\beta_n \alpha_{m_1} \varepsilon(q_{m_1}))
\end{equation}
where $b_i = q_{m_i}$, and $\beta_i$ and $\alpha_{m_i}$ are the signs defined earlier in the proof (see Figure \ref{fig:ab}).  Now, with $w(f)$ and signs $\epsilon',\epsilon_0, \epsilon_1, \ldots, \epsilon_n$ as in (\ref{eq:word}) in Section~\ref{ss:CE-DGA}, we see that (\ref{eq:vfbaq}) becomes
\[
v(f) = \epsilon' \epsilon_0 \varepsilon(w(f))
\]
since we have $\beta_i \alpha_{m_i} = \epsilon_i$ for $1 \leq i \leq n$ and $f$ cannot intersect the marked points $*_i$ because of their placement on right cusps together with the nearly plat assumption.  
Note that the sign $\epsilon' \epsilon_0$ is independent of $f$, so summing over  $f$ and applying Lemma \ref{lem:Cndn} gives that $\langle d_l e_k, e_{k+1} \rangle=0$ if and only if $\varepsilon \circ \partial(q_i)=0$.
  (Strictly speaking, Lemma \ref{lem:Cndn} gives a computation of $\langle d_l e_k, e_{k+1} \rangle$ for $x_l$ to the left of the handleslide preceding $q_i$.  However, passing such a handleslide cannot effect the value of $\langle d_l e_k, e_{k+1} \rangle$.) 

\begin{figure}[t]
\centering
\[
\begin{array}{ccrc}   
\includegraphics[scale=.6]{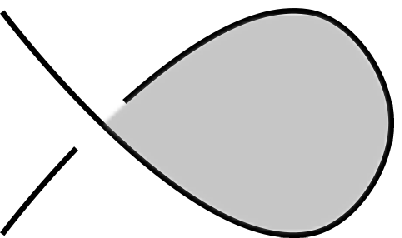} & \quad \quad \quad &  &\includegraphics[scale=.6]{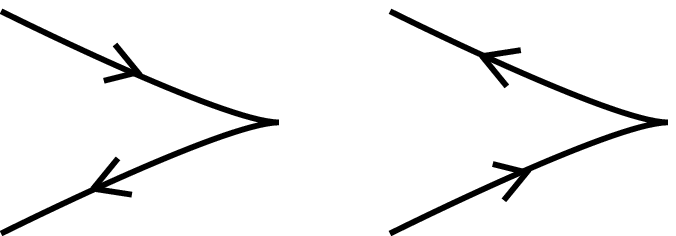}  \\
 f_0 & & \ell_i= & +1 \quad \quad \,\, \quad \quad -1   
\end{array}
\]
\caption{The exceptional disk, $f_0$, (left) and signs at marked right cusps (right).}
\label{fig:rcsign}
\end{figure}

When $q_i$ is a right cusp, we have a bijection $\Delta(q_i; b_1, \ldots, b_n) \cong \Delta(x_l, k,k+1; b_1, \ldots, b_n)$  when $n>0$.  In the case $n=0$,  $\Delta(q_i) \cong \Delta(x_l, k,k+1)   \cup \{f_0\}$  where $f_0$ is the unique disk in $\Delta(q_i)$ that appears to the right of $q_i$; see Figure \ref{fig:rcsign}.  The disk $f_0$ has $w(f_0) = 1$ if the cusp does not contain a marked point and $w(f_0) =t_i^{\ell_i}$ if the cusp contains the marked point $*_i$ where, as in Figure \ref{fig:rcsign}, the exponent $\ell_i$ is $+1$ (resp. $-1$) if the orientation of $D$ is from the upperstrand to the lowerstrand (resp. lowerstrand to the upperstrand) at the cusp.  Note that although Lemma \ref{lem:Cndn} only applies when $x_l$ is to the right of a crossing, the formula given there for $\langle d_l e_k, e_{k+1} \rangle$ remains valid next to the cusp since any intermediate right cusps or handleslides cannot effect this value.  Thus, we can compute
\[
\langle d_l e_k, e_{k+1} \rangle = \sum_{ f \in \Delta(x_l, k,k+1; b_1, \ldots, b_n)} v(f) = (\epsilon'\epsilon_0) \sum_{f \in \Delta(q_i; b_1, \ldots, b_n), f \neq f_0} \varepsilon(w(f)) = \varepsilon\circ \partial(q_i) - \varepsilon(w(f_0)).  
\]
It follows that $\varepsilon\circ \partial(q_i) = 0$ if and only if $\langle d_l e_k, e_{k+1} \rangle = -s_i^{\ell_i}$.
\end{proof}

% ********************
\section{A bijection between SR-form and A-form MCSs}
\mylabel{s:MCS2}

In this section we construct an explicit bijection between $MCS_m^{SR}(D;\F)$ and $MCS_m^{A}(D;\F)$.  The paper is then concluded with the proof of Theorem \ref{thm:structure}.  

\begin{theorem}
\mylabel{thm:SRAbijection}
Suppose $\front$ is the nearly plat front diagram of a Legendrian link $\Leg$ with a fixed Maslov potential and $m$ is a divisor of $2 r(\Leg)$. Then there exist well-defined maps $$\Phi: MCS_m^{SR}(\front;\field) \to MCS_m^{A}(\front;\field)$$ and $$\Psi: MCS_m^{A}(\front;\field) \to MCS_m^{SR}(\front;\field)$$ so that $\Psi = \Phi^{-1}$ holds.
\end{theorem}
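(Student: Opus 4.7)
The plan is to construct $\Phi$ and $\Psi$ explicitly by local handleslide manipulations, exploiting the fact that both standard forms have simple left cusps and are therefore determined by their handleslide sets via Proposition \ref{prop:simple}.  For $\MCS \in MCS_m^{\ruling}(\front;\field)$, I would define $\Phi(\MCS)$ by scanning $\front$ from left to right and replacing the SR-form cluster of handleslides at each switch, $m$-graded return, and (when $m=1$) right cusp by a single A-form handleslide placed immediately to the left of the associated crossing or right cusp.  The coefficient $\lambda$ of this replacement handleslide is determined by the requirement that the sequence of complexes constructed inductively from the new handleslide set (as in the proof of Proposition \ref{prop:simple}) satisfies the constraints of Proposition \ref{prop:H}, namely $\langle d_l e_k, e_{k+1}\rangle = 0$ just before a crossing and $\langle d_l e_k, e_{k+1}\rangle = -1$ (or $-s_i^{\pm 1}$ at a marked cusp) just before a right cusp.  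The explicit formulas for $\lambda$ in each case can be read off by performing the local chain-complex computation indicated in Figure \ref{fig:switchgradients} and Figure \ref{fig:SRform}.

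To construct $\Psi:MCS_m^{A}(\front;\field) \to MCS_m^{SR}(\front;\field)$, I would first recover a normal ruling $\rho$ from an A-form MCS by tracking the complexes $(C_l,d_l)$ left to right and classifying each $m$-graded crossing as a switch, return, or departure according to how the ``companion'' pattern of nonzero entries $\langle d_l e_i, e_j\rangle$ evolves across the crossing and its preceding A-form handleslide.  Once $\rho$ is extracted, the SR-form coefficients at each switch and return are uniquely determined by inverting the formulas in Figure \ref{fig:SRform}, and a local case analysis shows that $\rho$ so defined satisfies the normality conditions of Definition \ref{def:NR}(3b), is $m$-graded, and that the resulting handleslide set assembles into a genuine SR-form MCS compatible with $\rho$.

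The principal technical obstacle is verifying mutual invertibility, $\Psi \circ \Phi = \mathrm{id}$ and $\Phi \circ \Psi = \mathrm{id}$.  The delicate case is nested switches of types (S2) and (S3), where the SR-form cluster consists of three handleslides -- two adjacent to the crossing and a third between the companion strands -- whose combined action on the chain complex (as computed in the rightmost portion of Figure \ref{fig:switchgradients}) must reproduce the effect of the single A-form handleslide in $\Phi(\MCS)$ together with the downstream handleslides coming from later switches and returns that involve the same companion strands.  Once these local computations are pinned down, the parallel but simpler checks for type (S1) switches, returns of types (R1)--(R3), and (when $m=1$) right cusps complete the verification, and the identities $\Psi = \Phi^{-1}$ and $\Phi = \Psi^{-1}$ then follow by induction on the crossings and right cusps of $\front$ processed from left to right.
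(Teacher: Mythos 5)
There is a genuine gap: your definition of $\Phi$ is not well-defined, because the two ingredients you propose to determine the A-form coefficients do not determine them. First, the constraints of Proposition \ref{prop:H} are satisfied by \emph{every} A-form MCS (they are exactly the conditions for a handleslide set to be an MCS, and by Theorem \ref{thm:AugAformbijection} the A-form MCSs form a set in bijection with the augmentation variety, which is typically positive-dimensional), so they cannot single out one A-form MCS as ``the'' image of a given SR-form MCS; indeed a handleslide between the two strands crossing at $q_i$ does not even change the coefficient $\langle d_l e_k, e_{k+1}\rangle$ that Proposition \ref{prop:H} constrains there. Second, the coefficient of the A-form handleslide at a crossing is \emph{not} a local function of the SR-form cluster at that crossing: in the paper's construction it equals $b^l_{k,k+1}+r$, where $b^l_{k,k+1}$ is the coefficient accumulated in a travelling collection of handleslides $V_{\Phi,l}$ that has been pushed rightward past all earlier crossings (see Figure \ref{fig:PhiMap} and Proposition \ref{prop:regularity}). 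In particular, at an $m$-graded departure the SR-form MCS has no handleslide at all, yet $\Phi(\MCS)$ generically carries a nonzero handleslide there coming entirely from this propagated data; a ``replace each cluster in place'' prescription would put a zero there and would in general fail the right-cusp condition $\langle d_l e_k, e_{k+1}\rangle=-1$ downstream.

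The missing idea is precisely this propagation mechanism: the Type 0--6 moves on handleslide sets and the inductively maintained collections $V_{\Phi,i}$ (resp.\ $V_{\Psi,i}$), chosen properly ordered so that each step is canonical, with the invariant that the MCS is in A-form to the left of the travelling collection and in SR-form to the right (and vice versa for $\Psi$). The same issue infects your $\Psi$: the chain complexes of an A-form MCS are not standard with respect to any ruling, so one cannot read off switch/return/departure types or ``invert Figure \ref{fig:SRform}'' locally; the paper instead converts to SR-form as it sweeps, so that the partial ruling and the residue coefficient $r'$ (which decides switch versus departure) are extracted from the already-converted left portion. Finally, the identity $\Psi=\Phi^{-1}$ is not a crossing-by-crossing local check but rests on the global inductive relation $V_{\Phi,l}(\MCS)=-V_{\Psi,l}^{t}(\Phi(\MCS))$ of Claim \ref{claim:handleslidesets} (together with injectivity of $\Psi$ to get the other composite), and nothing in your sketch supplies a substitute for that relation.
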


The proof occupies the next three subsections.  We begin by describing several local modifications that may be applied to handleslide sets of an MCS to produce other MCSs.  In Section~\ref{sec:PhiPsiDefn}, the maps $\Phi$ and $\Psi$ are defined by applying particular sequences of these modifications.  Finally, the proof that $\Psi = \Phi^{-1}$ holds is given in Section~\ref{sec:SRAIdentity}.

\begin{proof}

\begin{figure}[t]
\labellist
\small\hair 2pt
\pinlabel {$r$} [br] at 220 52
\pinlabel {$-r^{-1}$} [br] at 280 52
\pinlabel {$s$} [br] at 143 52
\pinlabel {$s$} [br] at 480 52
\pinlabel {$r$} [br] at 557 52
\endlabellist
\centering
\includegraphics[scale=.6]{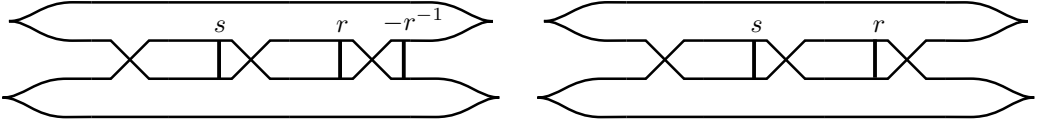}
\caption{Left: An SR-form MCS $\MCS$ with an (R1) graded return at the second crossing and an (S1) switch at the third crossing. Right: The A-form MCS $\Phi(\MCS)$ that is the result of applying the map $\Phi$ defined in Theorem~\ref{thm:SRAbijection} to the left MCS.}
\label{fig:trefoilMCS}
\end{figure}

Recall from Proposition~\ref{prop:simple}, that both A-form and SR-form MCSs are uniquely determined by their handleslide sets. Therefore, we will represent an A-form or SR-form MCS as a front diagram $\front$ with handleslides and suppress the chain complexes; see Figure~\ref{fig:trefoilMCS}. 

\begin{figure}[t]
\labellist
\small\hair 2pt
\pinlabel {$r_1 + r_2$} [br] at 55 120
\pinlabel {$r_1$} [br] at 196 120
\pinlabel {$r_2$} [br] at 275 120
\pinlabel {(a)} [br] at 155 135

\pinlabel {$r_1$} [br] at 336 115
\pinlabel {$r_2$} [bl] at 385 115
\pinlabel {$r_2$} [br] at 500 115
\pinlabel {$r_1$} [bl] at 555 115	
\pinlabel {(b)} [br] at 460 135

\pinlabel {$r$} [br] at 628 115
\pinlabel {$r$} [bl] at 862 115
\pinlabel {(c)} [br] at 762 135

\pinlabel {$r_1$} [br] at 31 26
\pinlabel {$r_2$} [bl] at 78 0
\pinlabel {$r_2$} [br] at 198 0
\pinlabel {$-r_1 r_2$} [bl] at 224 0
\pinlabel {$r_1$} [bl] at 245 26
\pinlabel {(d)} [br] at 155 29

\pinlabel {$r_1$} [br] at 335 0
\pinlabel {$r_2$} [bl] at 383 26
\pinlabel {$r_2$} [br] at 500 26
\pinlabel {$r_1 r_2$} [bl] at 526 26
\pinlabel {$r_1$} [bl] at 547 0
\pinlabel {(e)} [br] at 458 29

\pinlabel {$r$} [br] at 628 26
\pinlabel {$r$} [bl] at 862 26
\pinlabel {(f)} [br] at 762 29

\endlabellist
\centering
\includegraphics[scale=.45]{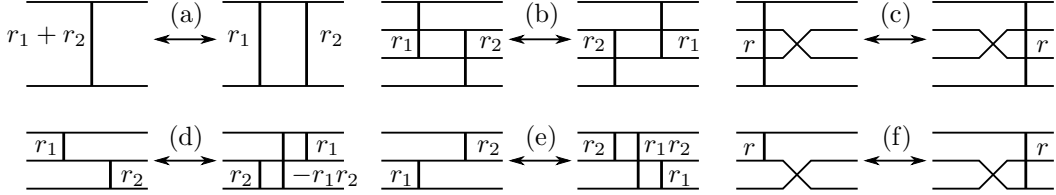}
\caption{Possible local modifications of handleslides in an MCS. The modifications shown do not illustrate all possibilities described in Section~\ref{s:MCS2}.}
\label{fig:MCSequiv}
\end{figure}

We list the local modifications that may be made to the handleslides of an MCS in a tangle $T = \{a \leq x \leq b\} \cap \front$ during the application of $\Phi$ and $\Psi$. After each modification, we are left with an MCS whose chain complexes are equal to those of the MCS before the modification, except possibly for the chain complexes with $x$-coordinates within $T$. 

\begin{enumerate}
	\item[] \textbf{Type 0:} (Introducing or Removing a Trivial Handleslide) In order to simplify the descriptions of the maps $\Phi$ and $\Psi$, we will often find it convenient to introduce or remove a handleslide with coefficient $0$ and endpoints on two strands with the same Maslov potential modulo $m$. We will refer to such a handleslide as a \textbf{trivial} handleslide. Note that the chain isomorphism between consecutive chain complexes in an MCS on either side of a trivial handleslide is the identity.   
	\item[] \textbf{Type 1:} (Sliding a Handleslide Past a Crossing)  Suppose $T$ contains exactly one crossing between strands $k$ and $k+1$ and exactly one handleslide $h$ with endpoints on strands $i$ and $j$, with $i < j$. Recall that we number strands from top to bottom. If $(i, j) \neq (k, k+1)$, then we may slide $h$, either left or right, past the crossing with the endpoints of $h$ remaining on the same strands of $\front$; see Figure~\ref{fig:MCSequiv}~(c)~and~(f) for two such examples. 
	\item[] \textbf{Type 2:} (Interchanging the Positions of Two Handleslides) Suppose $T$ contains exactly two handleslides $h_1$ and $h_2$ with endpoints on strands $i_1 < j_1$ and $i_2 < j_2$ and coefficients $r_1$ and $r_2$, respectively.  We may interchange the positions of the handleslides, so long as $j_1 \neq i_2$ and $i_1 \neq j_2$; see Figure~\ref{fig:MCSequiv}~(b) for one such example. If $j_1 = i_2$ (resp. $i_1 = j_2$) and $h_1$ is left of $h_2$, then we may interchange the positions of $h_1$ and $h_2$, but, in doing so, we must introduce a new handleslide with endpoints $i_1$ and $j_2$ (resp. $j_1$ and $i_2$) and coefficient $-r_1r_2$ (resp. $r_1r_2$); see Figure~\ref{fig:MCSequiv}~(d) for the case $j_1 = i_2$ and Figure~\ref{fig:MCSequiv}~(e) for the case $i_1 = j_2$. 
	\item[] \textbf{Type 3:} (Merging Two Handleslides) Suppose $T$ contains exactly two handleslides $h_1$ and $h_2$ with endpoints on the same two strands and coefficients $r_1$ and $r_2$, respectively. We may merge the two handleslides into one; see Figure~\ref{fig:MCSequiv}~(a). The coefficient of the resulting handleslide is $r_1 + r_2$. 
	\item[] \textbf{Type 4:} (Introducing Two Canceling Handleslides) If $T$ contains no crossings, cusps, or handleslides, then we may introduce two new handleslides with identical endpoints and coefficients $r$ and $-r$, where $r \in \field$.
\end{enumerate}			

We now define an ordering of handleslides within a collection of handleslides. Suppose the tangle $T$ has no crossings or cusps. Number the strands of $T$ from top to bottom $1,2, \hdots, s$. A handleslide $h$ in $T$ has endpoints on two strands of $T$ numbered $t_h$ and $b_h$, where $1 \leq t_h < b_h \leq s$. Given two handleslides $h$ and $h'$, we write $h \prec h'$ if either $t_{h} > t_{h'}$, or $t_h=t_{h'}$ and $b_h < b_{h'}$. A collection of handleslides $V$ in the tangle $T$ is \textbf{properly ordered} if given any two handleslides $h$ and $h'$ in $V$, $h$ is to the left of $h'$ if and only if $h \prec h'$; each dashed box in Figure~\ref{fig:PsiMap} contains a properly ordered collection of handleslides. We let $V^t$ denote the collection that results from reflecting $V$ about a vertical axis whose $x$-coordinate is the midpoint between the left-most and right-most handleslides in $V$. 

We will have need of two additional types of moves. Both are a composition of Type 0, 2 and 3 moves and involve a collection of handleslides $V$, with either $V$ or $V^{t}$ properly ordered, that sits within a tangle $T$ which is assumed to have no crossings or cusps.

\begin{enumerate}
	\item[] \textbf{Type 5:} (Incorporating a Handleslide $h$ into a Collection $V$) 
	Suppose $h$ is a handleslide in $T$ to the \emph{right} of $V$. We define a procedure that moves $h$ into $V$ using Type 2 and 3 moves to create a collection $\overline{V}$ so that if $V$ (resp. $V^t$) is properly ordered, then $\overline{V}$ (resp. $\overline{V}^t$) is properly ordered. Suppose $h$ has coefficient $r$ and $V$ includes a handleslide $h'$ with endpoints on the same strands as $h$ and coefficient $r'$. If there is no such $h'$ in $V$, use a Type 0 move to introduce a handleslide in $V$ with the same endpoints as $h$ and coefficient $r'=0$. The handleslide should be introduced so that $V$ has the same ordering property as before. Label the handleslides between $h$ and $h'$, from \emph{right to left}, by $h_1, h_2, \hdots, h_n$. For each $1 \leq j \leq n$, move $h$ past $h_j$ using a Type 2 move. Doing so may create a new handleslide $h'_j$; see Figure~\ref{fig:MCSequiv} (d) and (e). Merge $h'_j$ with the existing handleslide in $V$ with the same endpoints as $h'_j$ using Type 2 moves and one Type 3 move. The ordering of the handleslides in $V$ ensures that this may be done without introducing any new handleslides. Once $h$ and $h'$ are next to each other, merge them with a Type 3 move. The resulting handleslide has coefficient $r + r'$. An example of this move is given in Figure 25 of \cite{Henry2011} for $\field = \Z / 2\Z$.
	
An analogous procedure may be used to incorporate a handleslide $h$ that is to the \emph{left} of $V$ so that the new collection $\overline{V}$ has the same ordering property as $V$.
	
	\item[] \textbf{Type 6:} (Removing a Handleslide $h$ from a Collection $V$)
	A handleslide $h$ in a collection $V$ may be removed from $V$, using Type 2 moves, so that it appears to either the left or right of the remaining handleslides, denoted $\overline{V}$. As in the case of the Type 5 move, new handleslides may be created by the Type 2 moves; see Figure~\ref{fig:MCSequiv} (d) and (e). However, we may reorder and, if necessary, merge handleslides in $\overline{V}$ using Type 2 and 3 moves so that $\overline{V}$ has the same ordering property as $V$. The ordering property of $V$ ensures that the reordering may be done without introducing any new handleslides. The coefficient of $h$ is unchanged by this process. 

\end{enumerate}

In both the Type 5 and 6 move, the ordering property of $V$ implies that $\overline{V}$ is uniquely determined by $V$ and $h$. This observation is necessary to ensure that both $\Phi$ and $\Psi$ are well-defined.

Finally, recall from Definition~\ref{defn:MCS} that an MCS assigns to each marked point $*_i$ at a right cusp an element $s_i \in \field$. These elements are unchanged by Type 1-6 moves and so we will not have need to consider marked points in the remainder of the proof of Theorem~\ref{thm:SRAbijection}. 

\subsection{Defining $\Phi$ and $\Psi$}
\mylabel{sec:PhiPsiDefn}

We now define the sequence of modifications made by the map $\Phi$ (resp. $\Psi$) to transform an SR-form (resp. A-form) MCS $\MCS$ into an A-form (resp. SR-form) MCS. The sequence of modifications begins to the left of the left-most crossing and ends to the right of the right-most crossing and generalizes the sequence defined in the proofs of Theorem 6.17 and 6.20 in \cite{Henry2011}. The precise construction of $\Phi$ and $\Psi$ is somewhat involved, so we begin with a brief overview. Given an MCS $\MCS$, $\Phi$ and $\Psi$ attempt to slide the handleslides as far right as possible. As we slide the handleslides to the right, we may encounter crossings and other handleslides. We modify the handleslides of $\MCS$ at each such encounter so that we may continue sliding handleslides to the right. An encounter with a crossing may require that we leave behind one or more handleslides before continuing to the right. The modifications made to the MCS in such a case ensure that the handleslides left behind at the crossing adhere to the requirements of the special MCS form we are seeking to create. Thus, as we move from left to right in $\front$, the MCS adheres to one special form to the left of our current position and adheres to the other special form to the right. In the case of the map $\Psi$, we also inductively construct an $m$-graded normal ruling $\ruling$ so that the resulting SR-form MCS $\Psi(\MCS)$ is compatible with $\ruling$. 

Label the crossings of $\front$, from left to right, $q_1, q_2, \hdots, q_n$ and, for each $1 \leq i \leq n$, let $x_i \in \R$ be the $x$-coordinate of $q_i$. Let $x_0$ be the $x$-coordinate of the left-most left cusp and $x_{n+1}$ be the $x$-coordinate of the right-most right cusp. For each $1 \leq i \leq n+1$, let $T_i$ be the Legendrian tangle $\front \cap \{(x,z): x_0 \leq x < x_i\}$. For each $1 \leq i < n$, in defining the map $\Phi$ (resp. $\Psi$) we move a collection of handleslides $V_{\Phi,i}$ (resp. $V_{\Psi,i}$) from the left of $q_i$ to the right and, in so doing, create the handleslide collection $V_{\Phi, i+1}$ (resp. $V_{\Psi, i+1}$). In the case of $\Psi$, we also extend an $m$-graded normal ruling $\ruling$ on $T_i$ to an $m$-graded normal ruling on $T_{i+1}$, also denoted $\ruling$. In the definition of $\Psi$ (resp. $\Phi$), $V_{\Psi, i}$ (resp. $V_{\Phi, i}^t$) is properly ordered for all $1 \leq i \leq n+1$. Since $\Phi$ and $\Psi$ progress from left to right in $\front$, we need only define the process employed by the two maps to create $V_{\Phi, i+1}$ and $V_{\Psi, i+1}$ from $V_{\Phi, i}$ and $V_{\Psi, i}$ and, in the case of $\Psi$, the process employed to extend $\ruling$ from $T_i$ to $T_{i+1}$. The manner in which this is done depends on $V_{\Phi,i}$ and $V_{\Psi,i}$ and whether the crossing $q_i$ is $m$-graded, in the case of $\Phi$, or a switch, $m$-graded return, or $m$-graded departure of the inductively constructed $m$-graded normal ruling $\ruling$, in the case of $\Psi$. The precise description of this process is given in Sections~\ref{ssec:PhiInduction} and \ref{ssec:PsiInduction}. Figures~\ref{fig:PhiMap} and \ref{fig:PsiMap} illustrate one example of this process for $\Phi$ and $\Psi$, respectively. In Section~\ref{ssec:n+1}, we complete the definitions of $\Phi$ and $\Psi$ by  arranging handleslides near the right cusps of $\front$ and, in the case of $\Psi$, proving the inductively constructed $\ruling$ is an $m$-graded normal ruling on \emph{all} of $\front$. 

\begin{figure}[t]
\labellist
\small\hair 2pt
\pinlabel {$V_1 = V_{\Phi, i}$} [br] at 60 165
\pinlabel {$a$} [tl] at 4 134
\pinlabel {$b$} [tl] at 33 118
\pinlabel {$c$} [tl] at 61 102
\pinlabel {$r$} [tl] at 98 118
\pinlabel {$-r^{-1}$} [tl] at 120 120

\pinlabel {(1)} [bl] at 142 130

\pinlabel {$V_2$} [br] at 225 165
\pinlabel {$a$} [tl] at 169 134
\pinlabel {$rc$} [tl] at 193 102
\pinlabel {$b+r$} [tl] at 218 118
\pinlabel {$c$} [tl] at 253 102
\pinlabel {$-r^{-1}$} [tl] at 281 120

\pinlabel {(2)} [bl] at 303 130

\pinlabel {$V_3$} [br] at 390 170
\pinlabel {$a$} [tl] at 365 134
\pinlabel {$-a(b+r)$} [tl] at 320 167
\pinlabel {$b+r$} [tl] at 317 120
\pinlabel {$rc$} [tl] at 386 102
\pinlabel {$c$} [tl] at 413 102
\pinlabel {$-r^{-1}$} [tl] at 442 120

\pinlabel {(3)} [bl] at -2 38

\pinlabel {$V_4$} [tr] at 99 87
\pinlabel {$b+r$} [tl] at 12 26
\pinlabel {$a$} [tl] at 53 26
\pinlabel {$-a(b+r)$} [tl] at 57 44
\pinlabel {$c$} [tl] at 101 10
\pinlabel {$rc$} [tl] at 122 10
\pinlabel {$-r^{-1}$} [tl] at 139 26

\pinlabel {(4)} [bl] at 158 38

\pinlabel {$V_{\Phi,i+1}$} [tr] at 300 87
\pinlabel {$b+r$} [tl] at 173 26
\pinlabel {$a$} [tl] at 226 26
\pinlabel {$-a(b+r)$} [tl] at 229 44
\pinlabel {$0$} [tl] at 273 10
\pinlabel {$-r^{-1}$} [tl] at 290 26

\pinlabel {$rc$} [tl] at 319 10

\endlabellist
\centering
\includegraphics[scale=.95]{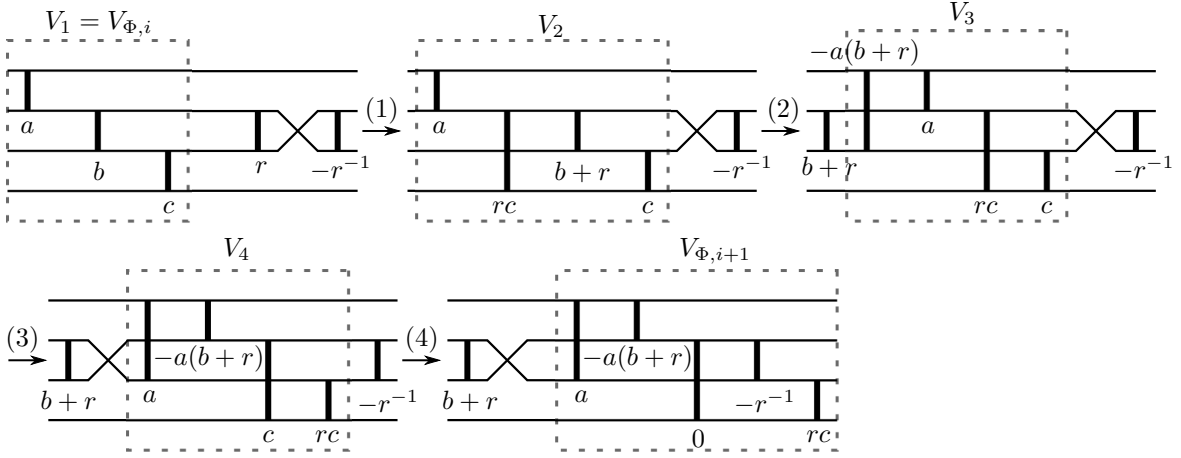}
\caption{The sequence of moves employed by the map $\Phi$ to create $V_{\Phi,i+1}$ from $V_{\Phi,i}$. Coefficients are chosen so that this figure, along with Figure~\ref{fig:PsiMap}, illustrate the proof given in Section~\ref{sec:SRAIdentity} that $\Psi = \Phi^{-1}$.}
\label{fig:PhiMap}
\end{figure}

\subsubsection{Creating $V_{\Phi,i+1}$ from $V_{\Phi, i}$}
\label{ssec:PhiInduction}
Suppose $\MCS$ is an $m$-graded MCS in SR-form compatible with the $m$-graded normal ruling $\ruling_{\MCS}$. Use Type 0 moves to introduce a trivial handleslide just to the left of each $m$-graded departure so that the endpoints of the handleslide are on the strands of the crossing. This is done to simplify the description of $\Phi$. Let $V_{\Phi, 1}$ be the empty collection of handleslides. Suppose that in defining $\Phi$ we have constructed a handleslide collection $V_{\Phi,i}$ and an MCS $\MCS_i$ so that $V_{\Phi,i}$ is to the immediate left of the left-most handleslide near $q_{i}$ and $\MCS_i$ is in SR-form compatible with $\ruling_{\MCS}$ to the right of $V_{\Phi,i}$ and in A-form to the left of $V_{\Phi,i}$. We now define how to create $V_{\Phi, i+1}$ by moving $V_{\Phi,i}$ past $q_i$ so that the resulting MCS $\MCS_{i+1}$ is in SR-form compatible with $\ruling_{\MCS}$ to the right of $V_{\Phi,i+1}$ and in A-form to the left of $V_{\Phi,i+1}$. 

Suppose $|q_i|$ is not congruent to $0$ modulo $m$. Note that $V_{\Phi, i}$ does not include a handleslide with endpoints on the crossing strands $k$ and $k+1$, nor does such a handleslide appear between $q_i$ and $V_{\Phi, i}$. Thus, we may move the handleslides in $V_{\Phi, i}$ past the crossing using a sequence of Type 1 moves. After doing so, the collection of handleslides, now to the right of the crossing, may be reordered using Type 2 moves, without introducing new handleslides, to create the desired collection $V_{\Phi, i+1}$ and MCS $\MCS_{i+1}$.  

Suppose $|q_i|$ is congruent to $0$ modulo $m$. Then there exists a, possibly trivial, handleslide, denoted $h$, with coefficient $r$ between $q_i$ and $V_{\Phi, i}$. The endpoints of $h$ are on the strands $k$ and $k+1$ crossing at $q_i$. Let $V_1$ denote $V_{\Phi, i}$. Use a Type 5 move to incorporate $h$ into $V_1$; see arrow (1) in Figure~\ref{fig:PhiMap}. Let $V_2$ denote the resulting collection of handleslides, $h'$ denote the handleslide in $V_2$ with endpoints on strands $k$ and $k+1$, and $r'$ denote the coefficient of $h'$. Use a Type 6 move to remove $h'$ from $V_2$ so that it sits to the left of the resulting collection $V_3$; see arrow (2) in Figure~\ref{fig:PhiMap}. Move the handleslides of $V_3$ past the crossing using Type 1 moves; see arrow (3) in Figure~\ref{fig:PhiMap}. Reorder the handleslides so that the resulting collection $V_4$ has the same ordering property as $V_1$. The ordering of handleslides in $V_3$ ensures that this may be done without introducing new handleslides. If $q_i$ is a switch or $m$-graded return of $\ruling_{\MCS}$, then there exist up to two handleslides of $\MCS$ to the right of $q_i$ arranged as in Figure~\ref{fig:SRform}. Incorporate these handleslides into $V_4$ using Type 5 moves; see arrow (4) in Figure~\ref{fig:PhiMap}. The resulting collection of handleslides is $V_{\Phi, i+1}$ and the MCS is $\MCS_{i+1}$. The collection $V_{\Phi, i+1}^t$ is properly ordered, since $V_4^t$ was properly ordered. 

Regardless of $|q_i|$, the MCS $\MCS_{i+1}$ is in SR-form compatible with $\ruling_{\MCS}$ to the right of $V_{\Phi,i+1}$ and in A-form to the left of $V_{\Phi,i+1}$. 

\begin{figure}[t]
\labellist
\small\hair 2pt
\pinlabel {$V_1 = V_{\Psi, i}$} [br] at 60 165
\pinlabel {$-c$} [tl] at 0 102
\pinlabel {$-b$} [tl] at 27 118
\pinlabel {$-a$} [tl] at 55 134
\pinlabel {$b+r$} [tl] at 88 120

\pinlabel {(1)} [bl] at 142 131

\pinlabel {$V_2$} [br] at 225 165
\pinlabel {$-c$} [tl] at 165 102
\pinlabel {$r$} [tl] at 198 118
\pinlabel {$-a$} [tl] at 220 134
\pinlabel {$a(b+r)$} [tl] at 238 120

\pinlabel {(2)} [bl] at 303 130

\pinlabel {$V_3$} [br] at 390 165
\pinlabel {$r$} [tl] at 325 118
\pinlabel {$-c$} [tl] at 335 102
\pinlabel {$-rc$} [tl] at 358 102
\pinlabel {$-a$} [tl] at 383 134
\pinlabel {$a(b+r)$} [tl] at 398 120

\pinlabel {(3)} [bl] at -2 38

\pinlabel {$V_4$} [tr] at 99 87
\pinlabel {$r$} [tl] at 21 26
\pinlabel {$-a$} [tl] at 118 26
\pinlabel {$a(b+r)$} [tl] at 85 43
\pinlabel {$-rc$} [tl] at 43 10
\pinlabel {$-c$} [tl] at 72 10

\pinlabel {(4)} [bl] at 139 38

\pinlabel {$V_4$} [tr] at 280 87
\pinlabel {$r$} [tl] at 161 26
\pinlabel {$-r^{-1}$} [tl] at 185 58
\pinlabel {$r^{-1}$} [tl] at 205 28
\pinlabel {$-a$} [tl] at 295 26
\pinlabel {$a(b+r)$} [tl] at 262 43
\pinlabel {$-rc$} [tl] at 220 10
\pinlabel {$-c$} [tl] at 247 10

\pinlabel {(5)} [bl] at 315 38

\pinlabel {$V_{\Psi,i+1}$} [tr] at 460 87
\pinlabel {$r$} [tl] at 338 26
\pinlabel {$-r^{-1}$} [tl] at 356 28
\pinlabel {$-rc$} [tl] at 380 10
\pinlabel {$r^{-1}$} [tl] at 410 26
\pinlabel {$0$} [tl] at 438 10
\pinlabel {$a(b+r)$} [tl] at 447 43
\pinlabel {$-a$} [tl] at 480 26
\endlabellist
\centering
\includegraphics[scale=.95]{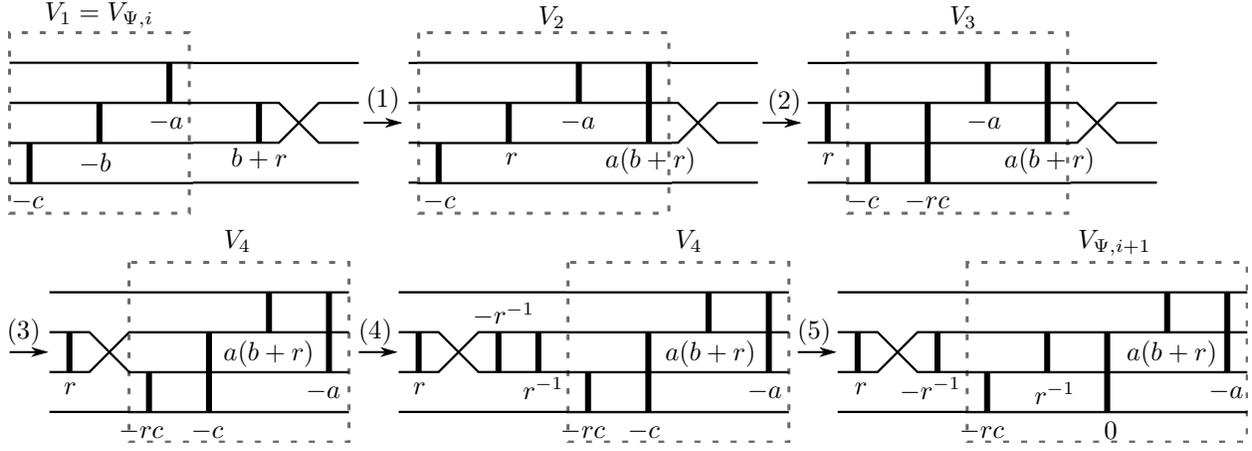}
\caption{The sequence of moves employed by the map $\Psi$ to create $V_{\Psi,i+1}$ from $V_{\Psi,i}$. Coefficients are chosen so that this figure, along with Figure~\ref{fig:PhiMap}, illustrate the proof that $\Psi = \Phi^{-1}$ given in Section~\ref{sec:SRAIdentity}.}
\label{fig:PsiMap}
\end{figure}

\subsubsection{Creating $V_{\Psi,i+1}$ from $V_{\Psi, i}$ and Extending $\ruling$ to $T_{i+1}$}
\label{ssec:PsiInduction}

Suppose $\MCS$ is an $m$-graded MCS in A-form. Let $V_{\Psi, 1}$ be the empty collection of handleslides and define $\ruling$ on $T_1$ to be the $m$-graded normal ruling that pairs strands that terminate together at a left cusp. Suppose that in defining $\Psi$ we have constructed a handleslide collection $V_{\Psi,i}$, an MCS $\MCS_i$, and an $m$-graded normal ruling $\ruling$ on the Legendrian tangle $T_i$ so that $V_{\Psi,i}$ is to the immediate left of the left-most handleslide near $q_{i}$ and $\MCS_i$ is in A-form to the right of $V_{\Psi,i}$ and in SR-form compatible with $\ruling$ to the left of $V_{\Psi,i}$. We now define how to create $V_{\Psi, i+1}$ by moving $V_{\Psi,i}$ past $q_i$ and how to extend the $m$-graded normal ruling $\ruling$ on $T_i$ to an $m$-graded normal ruling on $T_{i+1}$ so that the resulting MCS $\MCS_{i+1}$ is in A-form to the right of $V_{\Psi,i+1}$ and in SR-form compatible with $\ruling$ to the left of $V_{\Psi,i+1}$. 

Suppose $|q_i|$ is not congruent to $0$ modulo $m$.  Then, $V_{\Psi, i+1}$ arises from $V_{\Psi, i}$ in the same manner as in the corresponding case for $\Phi$ discussed above.  In addition, we extend $\ruling$ to an $m$-graded normal ruling on $T_{i+1}$ so that $q_i$ is not a switch.

Suppose $|q_i|$ is congruent to $0$ modulo $m$.  First, handleslide sets $V_1=V_{\Psi, i}$, $V_2$, $V_3$, and $V_4$ are produced precisely as was done for $\Phi$: A Type 5 move pushes the handleslide to the left of $q_i$ into $V_1$ to create $V_2$.  Next, the removal of the handleslide $h'$ with endpoints on the $k$ and $k+1$ strands and coefficient $r'$ produces $V_3$.  Finally, push $V_3$ past the crossing and reorder to arrive at $V_4$.   See arrows (1)-(3) of Figure~\ref{fig:PsiMap}.

Choose $\hat{x} \in \R$ so that $\hat{x}$ is slightly less than the $x$-coordinate of $h'$. At this point in the application of $\Psi$, the MCS $\widehat{\MCS}$, which is the intermediary MCS between $\MCS_i$ and $\MCS_{i+1}$ that includes $V_4$, is in SR-form to the left of $h'$ and compatible with the $m$-graded normal ruling $\ruling$ on the Legendrian tangle $T_i$. Let $\ruling_0$ be the fixed point free involution on the points $\front \cap \{(x,z): x = \hat{x}\}$ defined by $\ruling$. Since $\widehat{\MCS}$ is in SR-form to the left of $h'$, the discussion following Lemma \ref{lem:NormalForm} applies, and the chain complex $(C, d)$ of $\widehat{\MCS}$ with $x$-coordinate $\hat{x}$ is standard with respect to $\ruling$. Thus, $\ruling_0(k) \neq k+1$ holds, since, otherwise, $\langle d e_k, e_{k+1} \rangle \neq 0$ holds, which violates condition (4) in Definition~\ref{defn:MCS}. Let $A = \{k, k+1, \ruling_0(k), \ruling_0(k+1)\}$, $\alpha = \min A$, $\beta = \min (A \setminus \{\alpha, \ruling_0 (\alpha)\})$, $a = \langle d e_{\alpha}, e_{\ruling_0(\alpha)} \rangle$, and $b = \langle d e_{\beta}, e_{\ruling_{0}(\beta)} \rangle$. Since $(C, d)$ is standard with respect to $\ruling$, both $a$ and $b$ are non-zero. We will use $r', a, b \in \field$ and the involution $\ruling_0$ to introduce handleslides to the right of $q_i$ as in Figure~\ref{fig:SRform} and extend the $m$-graded normal ruling $\ruling$ past $q_i$. 

In cases (1)-(4) below, the handleslides and coefficients introduced with Type 4 moves correspond to handleslides described in Definition~\ref{defn:SR-form} (see Figure~\ref{fig:SRform}) so that the MCS $\MCS_{i+1}$ is in A-form to the right of $V_{\Psi,i+1}$ and in SR-form compatible with $\ruling$ to the left of $V_{\Psi,i+1}$, where $\ruling$ has been extended to an $m$-graded normal ruling on $T_{i+1}$.

\begin{enumerate}
	\item Suppose $\ruling_{0}(k) < k < k+1 < \ruling_{0}(k+1)$. If $r'=0$, then define $V_{\Psi, i+1}$ to be $V_4$ and extend $\ruling$ past $q_i$ so that $q_i$ is a departure. If $r' \neq 0$, use a Type 4 move to introduce two handleslides between $q_i$ and the collection $V_4$; see arrow (4) in Figure~\ref{fig:PsiMap}. The handleslides have endpoints on strands $k$ and $k+1$ and the left handleslide has coefficient $-(r')^{-1}$. Incorporate the right handleslide into $V_4$ with a Type 5 move; see arrow (5) in Figure~\ref{fig:PsiMap}. The resulting collection of handleslides is $V_{\Psi, i+1}$ and is properly ordered, since $V_4$ was properly ordered. Extend $\ruling$ past $q_i$ so that $q_i$ is an (S1) switch.
	\item Suppose: 
			\begin{enumerate}
				\item $\ruling_{0}(k+1) < \ruling_{0}(k) < k < k+1$; or
				\item $k < k+1 < \ruling_{0}(k+1) < \ruling_{0}(k)$.
			\end{enumerate}
If $r'=0$, then define $V_{\Psi, i+1}$ to be $V_4$ and extend $\ruling$ past $q_i$ so that $q_i$ is a departure. If $r' \neq 0$, then use a Type 4 move to introduce a pair of handleslides between $q_i$ and the collection $V_4$ with endpoints on $k$ and $k+1$, so that the left handleslide has coefficient $-(r')^{-1}$. Use a second Type 4 move to introduce a pair of handleslides between $q_i$ and $V_4$ with endpoints on the companion strands of $k$ and $k+1$, so that the left handleslide has coefficient $a(r')^{-1}b^{-1}$. Incorporate the right handleslide of each pair into $V_4$ with two Type 5 moves. The resulting collection of handleslides is $V_{\Psi, i+1}$ and is properly ordered. Extend $\ruling$ past $q_i$ so that $q_i$ is an (S2) or (S3) switch in the case of (2a) or (2b), respectively.
	\item Suppose $\ruling_{0}(k+1) < k < k+1 < \ruling_{0}(k)$. Then define $V_{\Psi, i+1}$ to be $V_4$ and extend $\ruling$ past $q_i$ so that $q_i$ is an $m$-graded (R1) return.
	\item Suppose: 
			\begin{enumerate}
				\item $\ruling_{0}(k) < \ruling_{0}(k+1) < k < k+1$; or
				\item $k < k+1 < \ruling_{0}(k) < \ruling_{0}(k+1)$.
			\end{enumerate}
	Use a Type 4 move to introduce two handleslides between $q_i$ and the collection $V_4$. The handleslides have endpoints on the companion strands of strands $k$ and $k+1$ and the left handleslide has coefficient $ar'b^{-1}$, in the case of (a), and $a^{-1}r'b$, in the case of (b). Incorporate the right handleslide into $V_4$ with a Type 5 move. The resulting collection of handleslides is $V_{\Psi, i+1}$ and is properly ordered, since $V_4$ was properly ordered. Extend $\ruling$ past $q_i$ so that $q_i$ is an $m$-graded (R2) or (R3) return, in the case of (4a) or (4b), respectively.
\end{enumerate}	

\subsubsection{Handling $V_{\Phi, n+1}$ and $V_{\Psi, n+1}$}
\label{ssec:n+1}

The collections $V_{\Phi, n+1}$ and $V_{\Psi, n+1}$ appear between the right-most crossing and the left-most right cusp. If $m \neq 1$, then we define that $\Phi$ and $\Psi$ erase all handleslides in $V_{\Phi, n+1}$ and $V_{\Psi, n+1}$, respectively. If $m = 1$, there may exist handleslides in $V_{\Phi, n+1}$ and $V_{\Psi, n+1}$ with endpoints on strands that terminate at the same right cusp. Such handleslides may affect the augmentation associated with an $A$-form MCS and so they must be accounted for. If $m=1$, we define that $\Phi$ and $\Psi$ erase all handleslides in $V_{\Phi, n+1}$ and $V_{\Psi, n+1}$, respectively, except those with endpoints on consecutive strands that terminate at the same right cusp. If $h$ is such a handleslide in either $V_{\Phi, n+1}$ or $V_{\Psi, n+1}$, slide $h$ so that it is just to the left of the right cusp.  If needed, use a Type 3 move to merge $h$ with the handleslide that already appears between the strands at that cusp. 

We now verify that, for both $\Phi$ and $\Psi$, the remaining handleslide set $H$ on $\front$ defines an MCS with simple left cusps. Since $H$ agrees with the handleslides of $\MCS_{n+1}$ near all crossings, Proposition~\ref{prop:H} implies that we need only check that the condition in Definition~\ref{defn:MCS} (4) on the coefficient between strands meeting at a right cusp holds for the chain complexes built inductively from $H$. But, for each right cusp in $\front$, this coefficient is unchanged by the introduction or removal of handleslides between $q_n$ and the right cusps. Thus, $H$ still determines an MCS with simple left cusps. 

The definition of $\Phi$ is now complete and we have shown that $\Phi (\MCS)$ is an A-form MCS. One last step is required in defining $\Psi$. Use Type 0 moves to remove any trivial handleslides that appear just to the left of $m$-graded departures of $\ruling$. The result, $\Psi(\MCS)$, is an MCS, though we must still verify that $\ruling$, the $m$-graded normal ruling built inductively by $\Psi$, is, in fact, an $m$-graded normal ruling on all of $\front$.  For this, it remains to see that strands that meet at right cusps are paired by $\ruling$.  Let $x \in \R$ be slightly less than the $x$-coordinate of the left-most handleslide in $V_{\Psi, n+1}$ and $(C, d)$ be the chain complex of $\Psi(\MCS)$ with $x$-coordinate $x$. By Lemma \ref{lem:NormalForm}, $(C, d)$ is standard with respect to $\ruling$. In order to prove that $\ruling$ is an $m$-graded normal ruling on all of $\front$, we must show that $\langle d e_k, e_{k+1} \rangle \neq 0$ if and only if the $k$ and $k+1$ strands meet at a right cusp. This follows from the requirements in Definition~\ref{defn:MCS} (4) at right cusps of $\Psi(\MCS)$ and the fact that the coefficient $\langle d e_k, e_{k+1} \rangle$ is unchanged by the handleslides between $q_{n}$ and the right cusps. Thus, $\Psi(\MCS)$ is an SR-form MCS compatible with $\ruling$.

\subsection{Proof that $\Psi = \Phi^{-1}$ holds}
\mylabel{sec:SRAIdentity}

\begin{figure}[t]
\labellist
\small\hair 2pt
\pinlabel {$V_{\Psi, i}$} [br] at 85 165
\pinlabel {$-c$} [tl] at 31 102
\pinlabel {$-b$} [tl] at 60 118
\pinlabel {$-a$} [tl] at 87 134

\pinlabel {$V_{\Phi, i}$} [br] at 162 165
\pinlabel {$a$} [tl] at 121 134
\pinlabel {$b$} [tl] at 150 118
\pinlabel {$c$} [tl] at 178 102
\pinlabel {$r$} [tl] at 213 118
\pinlabel {$-r^{-1}$} [tl] at 237 120

\pinlabel {$r$} [tl] at 5 26
\pinlabel {$-r^{-1}$} [tl] at 25 28

\pinlabel {$V_{\Psi,i+1}$} [tr] at 130 87
\pinlabel {$-rc$} [tl] at 46 10
\pinlabel {$0$} [tl] at 105 10 
\pinlabel {$r^{-1}$} [tl] at 78 26
\pinlabel {$a(b+r)$} [tl] at 111 43
\pinlabel {$-a$} [tl] at 147 26

\pinlabel {$V_{\Phi,i+1}$} [tr] at 250 87
\pinlabel {$a$} [tl] at 181 26
\pinlabel {$-a(b+r)$} [tl] at 184 44
\pinlabel {$-r^{-1}$} [tl] at 245 26
\pinlabel {$0$} [tl] at 230 10
\pinlabel {$rc$} [tl] at 274 10

\endlabellist
\centering
\includegraphics[scale=.95]{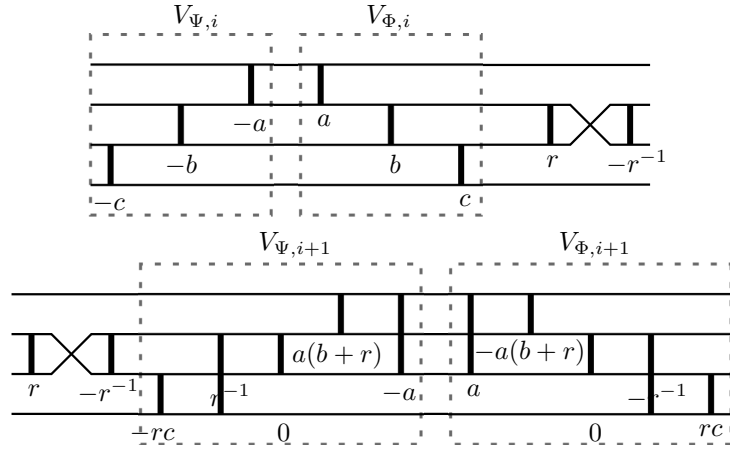}
\caption{If $\Phi$ and $\Psi$ are applied successively to an SR-form MCS $\MCS$, then $V_{\Psi,l} = - V_{\Phi,l}^t$ for all $l$. This can be seen by considering steps (1)-(4) in Figure~\ref{fig:PhiMap}, followed by steps (1)-(5) in \ref{fig:PsiMap}.}
\label{fig:PsiPhi}
\end{figure}

Suppose $\MCS$ is an SR-form $m$-graded MCS. First, we will show $\MCS = \Psi \circ \Phi(\MCS)$. Recall that the crossings in $\front$ are labeled $q_1, \hdots, q_n$, from left to right. Since $\MCS$ and $\Psi \circ \Phi(\MCS)$ are in SR-form, Proposition~\ref{prop:simple} implies that if $\MCS$ and $\Psi \circ \Phi(\MCS)$ have identical handleslides near each crossing and right cusp of $\front$, then the two MCSs are equal. Recall that $\MCS$ and $\Psi \circ \Phi(\MCS)$ are compatible with $m$-graded normal rulings $\ruling_{\MCS}$ and $\ruling_{\Psi \circ \Phi(\MCS)}$. If $|q_i|$ is not congruent to $0$ modulo $m$, then neither $\MCS$, nor $\Psi \circ \Phi(\MCS)$ have handleslides near $q_i$. Suppose $|q_i|$ is congruent to $0$ modulo $m$ and strands $k$ and $k+1$ cross at $q_i$. Let $h_i$ (resp. $h'_i$) be the handleslide of $\MCS$ (resp. $\Psi \circ \Phi(\MCS)$) to the immediate left of $q_i$ with endpoints on the strands $k$ and $k+1$, and  coefficient $r_i$ (resp. $r'_i$); see Figure~\ref{fig:SRform} for the case that $q_i$ is a switch or $m$-graded return of either $\ruling_{\MCS}$ or $\ruling_{\Psi \circ \Phi(\MCS)}$. If $\MCS$ (resp. $\Psi \circ \Phi (\MCS)$) has no such handleslide mark near $q_i$, use a Type 0 move to introduce a trivial handleslide mark in $\MCS$ (resp. $\Psi \circ \Phi (\MCS)$), denoted $h_i$ (resp. $h_i'$), to the left $q_i$ with endpoints on the strands $k$ and $k+1$. In this case, $r_i$ or $r_i'$ is $0$. Recall from Definition~\ref{defn:SR-form} and Figure~\ref{fig:SRform} that the coefficients of handleslides near a switch or $m$-graded return in an SR-form MCS are determined by the coefficient of the left-most handleslide and the chain complexes of the MCS. Thus, Definition~\ref{defn:SR-form} and Proposition~\ref{prop:simple} imply that if $r_i = r'_i$ for every crossing $q_i$ with degree congruent to $0$ modulo $m$, and $\MCS$ and $\Psi \circ \Phi (\MCS)$ have identical handleslides near right cusps, then $\MCS = \Psi \circ \Phi (\MCS)$. Claim~\ref{claim:handleslidesets} below proves $r_i = r'_i$ holds for all such $q_i$.

For each $1 \leq i \leq n$, the application of $\Phi$ to $\MCS$ creates handleslide collections $V_{\Phi,i}(\MCS)$ and the application of $\Psi$ to $\Phi(\MCS)$ creates handleslide collections $V_{\Psi,i}(\Phi(\MCS))$. Both $\Phi$ and $\Psi$ are defined by a sequence of modifications to the handleslides of an MCS and both maps progress from \emph{left to right} in $\front$. Thus, to compare $r_i$ and $r'_i$, we need only consider the effect of first moving $V_{\Phi,i}(\MCS)$ past $q_i$ using $\Phi$ and then moving $V_{\Psi,i}(\Phi(\MCS))$ past $q_i$ using $\Psi$. This is done in the following claim and is illustrated by considering steps (1)-(4) in Figure~\ref{fig:PhiMap} followed by steps (1)-(5) in Figure~\ref{fig:PsiMap}.

\begin{claim}
\label{claim:handleslidesets}
If $1 \leq l \leq n$, then

\begin{enumerate}
	\item $\ruling_{\MCS}$ and $\ruling_{\Psi \circ \Phi(\MCS)}$ agree on the tangle $T_l = \front \cap \{ (x, z):  -\infty < x < x_l \}$, where $x_l$ is the $x$-coordinate of $q_l$;
	\item $V_{\Phi,l}(\MCS) = -V_{\Psi,l}^t(\Phi(\MCS)) $ holds, where $-V_{\Psi,l}^t(\Phi(\MCS))$ is the result of negating each coefficient in $V_{\Psi,l}^t(\Phi(\MCS))$; and 
	\item If $|q_{l}|$ is congruent to $0$ modulo $m$, then $r_{l}$ and $r'_{l}$ are equal, where $r_{l}$ (resp. $r_{l}'$) is the coefficient of the handleslide to the left of $q_l$ in $\MCS$ (resp. $\Psi \circ \Phi (\MCS)$).
%	\item If $l \neq 1$ and $|q_{l-1}|$ is congruent to $0$ modulo $m$, then $r_{l-1}$ and $r'_{l-1}$ are equal, where $r_{l-1}$ (resp. $r_{l-1}'$) is the coefficient of the handleslide to the left of $q_l$ in $\MCS$ (resp. $\Psi \circ \Phi (\MCS)$).
\end{enumerate}
 
\end{claim}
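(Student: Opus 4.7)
My proof plan is a simultaneous induction on $l$ for all three statements. The base case $l=1$ is immediate: both $V_{\Phi,1}$ and $V_{\Psi,1}$ are empty, so (2) holds trivially, and both rulings agree on $T_1$ by pairing the strands at each left cusp; part (3) is vacuous until a crossing is processed.

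For the inductive step I would split on the degree of $q_l$. If $|q_l|\not\equiv 0 \pmod m$, then both $\Phi$ and $\Psi$ simply push their collections past $q_l$ by Type~1 moves and reorder by Type~2 moves that introduce no new handleslides; both also extend the ruling so that $q_l$ is not a switch. The inductive hypothesis (1) and (2) therefore transfer immediately to $l+1$, and (3) is vacuous.

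The substantive case is $|q_l|\equiv 0 \pmod m$. Here I would trace in parallel the operations of Figure~\ref{fig:PhiMap} for $\Phi$ and Figure~\ref{fig:PsiMap} for $\Psi$, exploiting the inductive identity $V_{\Psi,l}(\Phi(\MCS)) = -V_{\Phi,l}^t(\MCS)$. The central observation is a reflection-negation symmetry for the compound Type~5 and Type~6 moves: incorporating into a properly ordered collection $V$ a handleslide of coefficient $r$ from the right produces, up to reflecting and negating all coefficients, the same collection as incorporating a handleslide of coefficient $-r$ into $-V^t$ from the left. This follows from the sign identities in the Type~2 move, namely $(-r_1)(-r_2) = r_1 r_2$ together with the fact that reflection interchanges the two endpoint-matching cases ($j_1=i_2$ versus $i_1=j_2$), which produce opposite signs $-r_1r_2$ and $r_1r_2$. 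Granted this symmetry, statement~(3) at $l+1$ follows by tracking the coefficient of the handleslide $h'$ removed in step~(2) of each figure: the contributions accumulated by $\Phi$ as $h$ is incorporated are precisely cancelled when $\Psi$ applies the mirrored procedure to the reflected-negated data, forcing $r_l' = r_l$. Statement~(1) at $l+1$ then follows from (1) and (3) at $l$, since the extension of the ruling past $q_l$ is determined entirely by the local involution at $\hat{x}$ and by the value of $r_l'$ (versus $r_l$). Statement~(2) at $l+1$ follows by invoking the symmetry on the remaining steps and checking that the auxiliary handleslides added by step~(4) of $\Phi$ match, under reflection-negation, those added in steps~(4)–(5) of $\Psi$; their coefficients $-r^{-1}$, $ar^{-1}b^{-1}$, $arb^{-1}$, or $a^{-1}rb$ prescribed by Definition~\ref{defn:SR-form} make this matching direct.

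The main obstacle is the combinatorial bookkeeping verifying the reflection-negation symmetry in the presence of all the Type~2 interactions among intervening handleslides. I expect the cleanest execution is to make the bijection between handleslides of $V_{\Phi,l+1}(\MCS)$ and $V_{\Psi,l+1}(\Phi(\MCS))$ explicit, given by the obvious reflection-with-negation, and then to verify step by step that each Type~2 interaction performed during $\Phi$ is mirrored by a corresponding Type~2 interaction during $\Psi$ on the reflected-negated data, with signs consistent throughout. The worked example traced through Figures~\ref{fig:PhiMap} and~\ref{fig:PsiMap}, combined with Figure~\ref{fig:PsiPhi}, serves as a template for the general verification.
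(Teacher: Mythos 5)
Your proposal is correct and follows essentially the same route as the paper: a simultaneous induction on $l$, a case split on $|q_l| \bmod m$, and the key sign-reversing, endpoint-preserving correspondence between the handleslides created during the Type 5 and Type 6 moves of $\Phi$ and $\Psi$, which rests exactly on the opposite signs $-r_1r_2$ versus $r_1r_2$ in the two Type 2 interchange cases. The only detail worth making explicit in a full write-up is that matching the auxiliary handleslide coefficients $ar^{-1}b^{-1}$, $arb^{-1}$, $a^{-1}rb$ requires first observing that the chain complexes of $\MCS$ and $\Psi\circ\Phi(\MCS)$ just left of $q_l$ coincide, which the identity $V_{\Phi,l}(\MCS)=-V_{\Psi,l}^t(\Phi(\MCS))$ delivers since the two collections cancel in pairs.
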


\begin{proof}[Proof of Claim \ref{claim:handleslidesets}]
We prove the claim by inducting on $l$. The argument is illustrated in Figures~\ref{fig:PhiMap}, \ref{fig:PsiMap}, and \ref{fig:PsiPhi}. Since both $V_{\Phi,1}(\MCS)$ and $V_{\Psi,1}(\Phi(\MCS))$ are empty, $V_{\Phi,1}(\MCS) = -V_{\Psi,1}^t(\Phi(\MCS)) $ holds and applying $\Phi$ to $\MCS$ and $\Psi$ to $\Phi(\MCS)$ does not affect the handleslide to the left of $q_1$. Therefore, $r_1 = r'_1$ holds. Both $\ruling_{\MCS}$ and $\ruling_{\Psi \circ \Phi(\MCS)}$ must pair strands meeting at a left cusp, so $\ruling_{\MCS}$ and $\ruling_{\Psi \circ \Phi(\MCS)}$ agree on $T_1$. Suppose (1)-(3) hold for all $1 \leq j \leq l $. 

%Since (3) is vacuously true for $l=1$, we have verified (1)-(3) hold for $l=1$. 

Suppose $|q_l|$ is not congruent to $0$ modulo $m$. Since $\ruling_{\MCS}$ and $\ruling_{\Psi \circ \Phi(\MCS)}$ are $m$-graded, neither can have a switch at $q_i$. Thus, since $\ruling_{\MCS}$ and $\ruling_{\Psi \circ \Phi(\MCS)}$ agree on $T_l$, they must also agree on $T_{l+1}$, and so (1) holds for $l+1$. At $q_l$, $\Phi$ and $\Psi$ use Type 1 moves to move the handleslides of $V_{\Phi,l}(\MCS)$ and $V_{\Psi,l}(\Phi(\MCS))$ past $q_i$ and Type 2 moves to reorder the handleslides after the crossing. There are no new handleslides created during this process. Thus, since (2) holds for $l$, it must also hold for $l+1$. Since (3) is vacuously true when $|q_l|$ is not congruent to $0$ modulo $m$, we have shown (1)-(3) hold for $l+1$ in this case.

Suppose $|q_l|$ is congruent to $0$ modulo $m$. The maps $\Phi$ and $\Psi$ both employ a Type 5 move followed by a Type 6 move to create the collections $V_3(\MCS)$ and $V_3(\Phi(\MCS))$ from $V_{\Phi,l}(\MCS)$ and $V_{\Psi,l}(\Phi(\MCS))$, respectively; see arrows (1) and (2) in Figures~\ref{fig:PhiMap} and \ref{fig:PsiMap}. There is a sign-reversing, endpoint-preserving correspondence between new handleslides created by $\Phi$ and new handleslides created by $\Psi$ during this process. We now detail this correspondence and prove (3) holds for $l+1$. Let $h^0 = h_l$ and $h^4 = h_l'$. Let $h^1$ and $h^3$ be the handleslides of $V_1 (\MCS)$ and $V_1(\Phi(\MCS))$, respectively, with endpoints on strands $k$ and $k+1$. Label the handleslides of $V_{\Phi,l}(\MCS)$ (resp. $V_{\Psi,l}(\Phi(\MCS))$), from right to left (resp. left to right), $g_1, \hdots, g_t$ (resp. $g'_1, \hdots, g'_t$). Since $V_{\Phi,l}(\MCS) = -V_{\Psi,l}^t(\Phi(\MCS))$, for all $1 \leq j \leq t$, $g_j$ and $g'_j$ have endpoints on the same strands and their coefficients, denoted $s_j$ and $s'_j$ respectively, satisfy $s_j = -s'_j$. Suppose $g_{a}$ and $g'_a$ are the handleslides of $V_{\Phi,l}(\MCS)$ and $V_{\Psi,l}(\Phi(\MCS))$ with endpoints on strands $k$ and $k+1$. Then, the handleslide $h^2$ to the right of $V_{\Psi,l}(\Phi(\MCS))$ that results from moving $h^0$ into $V_{\Phi,l}(\MCS)$ with a Type 5 move and $h^1$ out of $V_{1}(\MCS)$ with a Type 6 move has coefficient $r_l + s_a$. The map $\Psi$ moves $h^2$ into $V_{\Psi,l}(\Phi(\MCS))$ with a Type 5 move and out of $V_{1}(\Phi(\MCS))$ with a Type 6 move. The resulting handleslide is $h^4$ and its coefficient is $r'_l = r_l + s_a + s'_a$. Since $s_a = -s'_a$ holds, we see that $r_l' = r_l$ holds and so we have verified that (3) holds for $l+1$. Finally, we note that the identity $V_{\Phi,l}(\MCS) = -V_{\Psi,l}^t(\Phi(\MCS))$, along with the sign difference in Figure~\ref{fig:MCSequiv} (d) and Figure~\ref{fig:MCSequiv} (e) imply the following two biconditional statements. For $1 \leq i < a$, a handleslide with coefficient $\alpha$ and endpoints on strands $\beta$ and $\gamma$ is created when $h_0$ is moved past $g_i$ during the Type 5 move of $\Phi$ if and only if a handleslide with coefficient $-\alpha$ and endpoints on strands $\beta$ and $\gamma$ is created when $h_3$ is moved past $g'_i$ during the Type 6 move of $\Psi$. Likewise, for $a+1 \leq i \leq t$, a handleslide with coefficient $\alpha$ and endpoints on strands $\beta$ and $\gamma$ is created when $h_1$ is moved past $g_i$ during the Type 6 move of $\Phi$ if and only if a handleslide with coefficient $-\alpha$ and endpoints on strands $\beta$ and $\gamma$ is created when $h_2$ is moved past $g'_i$ during the Type 5 move of $\Psi$. Thus, $V_3(\MCS) =-V_3^t(\Phi(\MCS))$ holds. Now, $\Phi$ and $\Psi$ use Type 1 moves to move the handleslides of $V_{3}(\MCS)$ and $V_{3}(\Phi(\MCS))$ past $q_i$ and Type 2 moves to reorder the handleslides after the crossing. There are no new handleslides created during this process. Thus, since $V_{3}(\MCS) = -V_{3}^t(\Phi(\MCS))$ holds, $V_{4}(\MCS) = -V_{4}^t(\Phi(\MCS))$ holds as well. 

It remains to verify (1) and (2) hold for $l+1$. Let $(C, d)$ (resp. $(C', d')$) be the chain complex of $\MCS$ (resp. $\Psi \circ \Phi (\MCS)$) that appears to the left of $h_l$ (resp. $h_l'$). Definition~\ref{defn:SR-form} and Lemma~\ref{lem:NormalForm} imply that the placement and coefficients of handleslides of $\MCS$ (resp. $\Psi \circ \Phi (\MCS)$) that appear near $q_i$ are determined by $r$ (resp. $r'$) and $(C, d)$ (resp. $(C', d')$); see Figure~\ref{fig:SRform}. Note that $(C', d')$ is equal to the chain complex that sits to the immediate left of $V_{\Psi, l} (\Phi(\MCS))$ and $(C, d)$ is equal to the chain complex that sits to the immediate right of $V_{\Phi, l} (\MCS)$. Since $V_{\Phi,l}(\MCS) = -V_{\Psi,l}^t(\Phi(\MCS))$ holds, the handleslides cancel in pairs when $V_{\Phi,l}(\MCS)$ and $V_{\Psi,l}(\Phi(\MCS))$ are placed next to each other. Thus, $(C, d)$ and $(C', d')$ are equal. Since we have already verified $r$ and $r'$ are equal, we may conclude that the placement and coefficients of handleslides of $\MCS$ near $q_l$ is the same as the placement and coefficients of handleslides of $\Psi \circ \Phi (\MCS)$ near $q_l$. This has two consequences. First, since $(C, d) = (C',d')$ holds and $\MCS$ and $\Psi \circ \Phi(\MCS)$ have the same handleslides near $q_l$, the chain complexes of $\MCS$ and $\Psi \circ \Phi(\MCS)$ that sit just to the right of these handleslides are equal as well and both are standard with respect to $\ruling_{\MCS}$ and $\ruling_{\Psi \circ \Phi(\MCS)}$, respectively. Thus, (1) holds for $l+1$; $\ruling_{\MCS}$ and $\ruling_{\Psi \circ \Phi (\MCS)}$ agree on $T_{l+1}$. Secondly, a handleslide incorporated into $V_4(\MCS)$ by $\Phi$, must be replaced with an identical handleslide by $\Psi$. Suppose a handleslide $g$ with coefficient $s$ is to the right of $V_4(\MCS)$ and is incorporated into $V_4(\MCS)$ with a Type 5 move during the application of $\Phi$. Then during the application of $\Psi$, two handleslides are created to the left of $V_4(\Phi(\MCS))$ with coefficients $s$ and $-s$ and the handleslide with coefficient $-s$, denoted $g'$, is incorporated into $V_4(\Phi(\MCS))$ with a Type 5 move. Label the handleslides that $g$ (resp. $g'$) moves past during the Type 5 move employed by $\Phi$ (resp $\Psi$), from right to left (resp. left to right), $g_1, \hdots, g_t$ (resp. $g_1', \hdots, g_t'$). Since $V_4(\MCS)=-V_4^t(\Phi(\MCS))$, for all $1 \leq j \leq t$, $g_j$ and $g_j'$ have endpoints on the same strands and the coefficients of $g_j$ and $g'_j$, denoted $s_j$ and $s'_j$ respectively, satisfy $s_j = -s'_j$. In addition, $g$ (resp. $g'$) moves from right to left (resp. left to right) past $g_j$ (resp. $g_j'$). Thus, since the coefficients of $g$ and $g'$ differ by a sign, $s_j = -s'_j$ holds, and the handleslides created in Figure~\ref{fig:MCSequiv} (d) and Figure~\ref{fig:MCSequiv} (e) have opposite signs, a handleslide with coefficient $\alpha$ is created by moving $g$ past $g_j$ if and only if a handleslide with coefficient $-\alpha$ is created by moving $g'$ past $g_j'$. Thus, we may conclude (2) holds for $l+1$; $V_{\Phi,l+1}(\MCS)$ and $-V_{\Psi,l+1}^t(\Phi(\MCS))$ are equal.
\end{proof}

We have shown that, for any switch or $m$-graded return $q_i$, $r_i = r'_i$ holds. Finally, we must show that $\MCS$ and $\Psi \circ \Phi(\MCS)$ have the same handleslide marks near right cusps. This follows immediately from the definition of $\Phi$ and $\Psi$ and the fact that $V_{\Psi,n+1}(\Phi(\MCS)) = -V_{\Phi,n+1}^t(\MCS) $. Thus, $\Psi \circ \Phi = Id$ holds. 

Since $\Psi$ is a left inverse of $\Phi$, $\Psi$ is surjective. If $\Psi$ is injective, then it follows that $\Psi = \Phi^{-1}$. In order to show $\Psi$ is injective, let $\MCS$ and $\MCS'$ be distinct A-form MCSs in $MCS_m^A(\front; \field)$. Find the smallest value $1 \leq i \leq n$ so that the handleslides $h$ and $h'$ of $\MCS$ and $\MCS'$, respectively, to the left of crossing $q_i$ have coefficients $r$ and $r'$, respectively, and $r \neq r'$. Suppose strands $k$ and $k+1$ cross at $q_i$. The MCSs $\MCS$ and $\MCS'$ have identical handleslides near all crossings to the left of $q_i$. Thus, when $\Psi$ is applied to $\MCS$ or $\MCS'$, the same collection of handleslides (denoted $V_{\Psi,i}$ in Figure~\ref{fig:PsiMap}) arrives at $q_i$ from the left. Call this collection $V$ and suppose the handleslide in $V$ between strands $k$ and $k+1$ has coefficient $a$, where $a$ is possibly $0$. The coefficient of the handleslide to the left of $q_i$ in $\Psi(\MCS)$ and $\Psi(\MCS')$ is $a + r$ and $a + r'$, respectively. Since $r \neq r'$, we may conclude $\Psi(\MCS) \neq \Psi(\MCS')$ and $\Psi$ is injective, as desired. 
\end{proof}

In Section~\ref{sec:regularityproof}, we complete the proof of the main result, Theorem~\ref{thm:Main}, by giving the proof of Theorem~\ref{thm:structure}. The proof requires the following proposition. In short, the coefficients of handleslides in the A-form MCS $\Phi(\MCS)$ are given by Laurent polynomials in the coefficients of handleslides in the SR-form MCS $\MCS$, and the Laurent polynomials depend only on the $m$-graded normal ruling compatible with $\MCS$. 

\begin{proposition}
\label{prop:regularity}
Let $\front$ be the nearly plat front diagram of a Legendrian link $\Leg$ with fixed Maslov potential. Let $m$ be a divisor of $2 r(\Leg)$ and $\ruling$ an $m$-graded normal ruling of $\front$. Let $n(\ruling)$ be the number of switches of $\ruling$ and $r(\ruling)$ be the number of $m$-graded returns of $\ruling$ when $m \neq 1$ and the number of $m$-graded returns and right cusps when $m=1$. Label the crossings and right cusps of $\front$, from left to right, by $q_1, \hdots, q_N$. 

Then, for each $1 \leq i \leq N$, there exists a Laurent polynomial $g_i^{\ruling} \in \field[x_1^{\pm1}, \hdots, x_{n(\ruling)}^{\pm1},z_1, \hdots,z_{r(\ruling)}]$ so that, for any SR-form MCS $\MCS \in MCS_m^{\ruling}(\front; \field)$, the coefficient of the handleslide at $q_i$ in the A-form MCS $\Phi(\MCS)$ is $g_i^{\ruling}(r_1, \hdots, r_{n(\ruling)}, u_{1}, \hdots, u_{r(\ruling)})$ where

\begin{enumerate}
	\item For $1 \leq j \leq n(\ruling)$, $r_j \in \field^{\times}$ is the coefficient of the handleslide to the left of the $j^{th}$ switch crossing in $\MCS$; see Figure~\ref{fig:SRform};
	\item For $1 \leq j \leq r(\ruling)$, $u_j \in \field $ is the coefficient of the handleslide to the left of the $j^{th}$ $m$-graded return crossing or right cusp in $\MCS$; see Figure~\ref{fig:SRform} for the case of an $m$-graded return;
\end{enumerate}

\end{proposition}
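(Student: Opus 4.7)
My plan is to prove the proposition by induction from left to right along the front diagram, tracking not only the coefficients of the handleslides in each intermediate collection $V_{\Phi,l}$ but also the entries of the chain complexes appearing in the intermediate MCS $\MCS_l$. The induction hypothesis I will carry is: at each stage $l$, every coefficient of a handleslide in $V_{\Phi,l}$, and every entry $\langle d_k e_p, e_q\rangle$ of each chain complex of $\MCS_l$, lies in the Laurent polynomial ring $\field[x_1^{\pm 1},\dots,x_{n(\rho)}^{\pm 1},z_1,\dots,z_{r(\rho)}]$, and moreover the particular Laurent polynomial depends only on $\rho$ and on the combinatorial data of $\front$ (not on the specific choice of $\MCS \in MCS_m^{\rho}(\front;\field)$).

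For the base case, $V_{\Phi,1}$ is empty and the chain complexes in the region of $\front$ to the left of $q_1$ are determined entirely by the simple left cusp structure, so their entries are $0$ or $\pm 1$. For the inductive step, I would carefully audit each of the moves of Types $0$ through $6$ that $\Phi$ applies in passing from $V_{\Phi,l}$ to $V_{\Phi,l+1}$: Type $0$ introduces a handleslide with coefficient $0$; Type $1$ changes no coefficients; Type $2$ either preserves coefficients or introduces a new handleslide whose coefficient is $\pm r_1 r_2$; and Type $3$ replaces two handleslides by one with coefficient $r_1 + r_2$. Type $5$ and Type $6$ are compositions of Types $2$ and $3$, and crucially $\Phi$ never invokes Type $4$. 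The chain complexes are updated via Definition~\ref{defn:MCS}(5), which prescribes isomorphisms whose matrix entries are again Laurent polynomial operations (multiplication by a handleslide coefficient or its inverse, swaps, and quotients by acyclic subcomplexes).

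The only points where inversion occurs are the SR-form handleslides near a switch, where the definition in Figure~\ref{fig:SRform} involves coefficients like $-r^{-1}$ and $ar^{-1}b^{-1}$. Here $r$ is one of the switch variables $x_j$, hence invertible in the target ring; and $a,b$ are entries of the standard-form chain complex to the left of the switch, which by Lemma~\ref{lem:NormalForm} and the induction hypothesis are nonzero Laurent monomials in the switch variables $x_j^{\pm 1}$ alone. No inversion of a return coefficient $u_j$ is ever needed, which is exactly why the $z_j$ may appear only with nonnegative exponents. The coefficient of the A-form handleslide produced at $q_i$ is the coefficient of the handleslide $h'$ removed from $V_2$ during the Type $6$ move defining $V_3$ (for an interior crossing), or for the right-cusp case with $m=1$ is the sum of the coefficient carried into $V_{\Phi,n+1}$ with the cusp coefficient $u_j$; in both cases this is a Laurent polynomial in the allowed variables.

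The independence of $g_i^\rho$ on the choice of $\MCS$ will follow because the entire combinatorial recipe executed by $\Phi$ — the sequence of moves performed, the identities of the strands involved, and the pattern of SR-form handleslides it encounters — is dictated by $\rho$ together with $\front$; only the numerical coefficients that feed into each move are specific to $\MCS$, and those enter as the formal variables $x_j$ and $z_j$. I expect the main obstacle to be bookkeeping: after several Type $5$ or Type $6$ moves, many auxiliary handleslides can be created by the Type $2$ interactions in Figure~\ref{fig:MCSequiv}(d),(e), and one must verify that the proper ordering requirement on the collections $V_{\Phi,l}^t$ is enough to ensure the sequence of merges is uniquely determined by $\rho$ and $\front$ rather than by ad hoc choices, so that the induction hypothesis really does propagate a single Laurent polynomial $g_i^\rho$ rather than an MCS-dependent one.
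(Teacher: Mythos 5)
Your proposal is correct and follows essentially the same route as the paper: a left-to-right induction on the intermediate collections $V_{\Phi,l}$, observing that every move of Types 0--3, 5, 6 is a Laurent-polynomial operation determined combinatorially by $\rho$ and $\front$, and that the only inversions occur for switch variables and for the standard-form differential entries, which by Lemma~\ref{lem:NormalForm} (and the discussion before Theorem~\ref{thm:zrho}) are monomials in the $x_j^{\pm 1}$. Your concern about well-definedness of the merge order is already handled by the paper's definition of the Type 5 and 6 moves, where the proper ordering of $V$ guarantees $\overline{V}$ is uniquely determined.
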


\begin{proof}
Let $n$ be the number of crossings in $\front$. For each $1 \leq l \leq n+1$, let $V_{\Phi,l}$ be the handeslide collection $V_{\Phi,l}(\MCS)$ as in the construction of the map $\Phi$. Given $1 \leq l \leq n$, we number the strands of $\front$, from top to bottom, $1, 2, \hdots, s_l$ to the left of the $l^{th}$ crossing. Given $1 \leq i < j \leq s_l$, let $b^{l}_{i,j}$ be the coefficient of the handleslide in $V_{\Phi,l}$ with endpoints on strands $i$ and $j$. If no such handleslide exists in $V_{\Phi, i}$, let $b^l_{i,j}$ be $0$. Suppose $q_l$ is a crossing or right cusp and strands $k$ and $k+1$ cross or terminate at $q_i$. Let $r \in \field$ be the coefficient of the handleslide with endpoints on $k$ and $k+1$ just to the left of $q_l$ in $\MCS$. Again, if no such handleslide exists, then let $r=0$. Then, the coefficient of the handleslide at the crossing (resp. right cusp) $q_l$ in $\Phi(\MCS)$ is $b^l_{k,k+1} + r$ (resp. $b^{n+1}_{k,k+1}+r$); an example of a crossing is illustrated in Figure~\ref{fig:PhiMap}. Thus, we need only show that $b^l_{k,k+1}$ is given by a Laurent polynomial in $\field[x_1^{\pm1}, \hdots, x_{n(\ruling)}^{\pm1},z_1, \hdots,z_{r(\ruling)}]$. This is the content of Claim~\ref{claim:regularity}. 

\begin{claim}
\label{claim:regularity}
For all $1 \leq l \leq n$ and $1 \leq i < j \leq s_l$, there exists a Laurent polynomial $h^{l}_{i,j} \in \field[x_1^{\pm1}, \hdots, x_{n(\ruling)}^{\pm1},z_1, \hdots,z_{r(\ruling)}]$ so that, for any SR-form MCS $\MCS \in MCS_m^{\ruling}(\front;\field)$, $$b^l_{i,j} = h^l_{i,j}(r_1, \hdots, r_{n(\ruling)}, u_{1}, \hdots, u_{r(\ruling)})$$ where  $r_1, \hdots, r_{n(\ruling)}, u_{1}, \hdots, u_{r(\ruling)}$ are as defined in the statement of Proposition~\ref{prop:regularity}.
\end{claim}

In order to prove Claim~\ref{claim:regularity}, we will induct on $l$. For $l=1$ and any $\MCS \in MCS_m^{\ruling}(\front; \field)$, $V_{\Phi, 1}$ is empty, so $b^l_{i,j}=0$ for all $1 \leq i < j \leq s_1$. Suppose the claim holds for some fixed $l$. We must show the claim holds for $l+1$ as well. Let $\MCS \in MCS_m^{\ruling}(\front; \field)$. The coefficients of handleslides in $V_{\Phi, l+1}$ are the result of arithmetic operations, specifically addition, subtraction, and multiplication, involving the coefficients of handleslides in $V_{\Phi, l}$ and the coefficients of handleslides near the crossing $q_l$; see Figure~\ref{fig:PhiMap} for the case of an (S1) switch at $q_l$. The arithmetic operations are defined by the Type 1-6 moves; see, for example, Figure~\ref{fig:MCSequiv}. The sequence of moves used by $\Phi$ is determined by the endpoints of handleslides near $q_l$, which depends only on $\ruling$. Let $(C,d)$ be the chain complex of $\MCS$ with $x$-coordinate to the immediate left of the left-most handleslide near $q_l$. If there are no handleslides near $q_l$, let $(C,d)$ be the chain complex of $\MCS$ with $x$-coordinate to the immediate left of $q_l$. By Lemma~\ref{lem:NormalForm}, $(C, d)$ is standard with respect to $\ruling$. The endpoints of handleslides near $q_l$ depend only on $\ruling$ and the coefficient of any such handleslide is a product of coefficients from the differential $d$ in $(C, d)$ and an element of $\{ \pm r_1^{\pm1}, \hdots, \pm r_{n(\ruling)}^{\pm1}, u_{1}, \hdots, u_{r(\ruling)}\}$; see Figure~\ref{fig:SRform}. However, each coefficient in the differential $d$ is determined by evaluating a Laurent polynomial in $\field[x_1^{\pm1}, \hdots, x_{n(\ruling)}^{\pm1}]$ on $(r_1, \hdots, r_{n(\ruling)})$ and this Laurent polynomial depends only on $\ruling$.  Specifically, as in the discussion prior to Theorem \ref{thm:zrho}, the coefficient of the differential that connects the two boundary strands of a disk $D_i$ of $\rho$ is given by a product with one term of the form $\pm r_i^{\pm1}$ for each switch along $D_i$ that occurs prior to $(C,d)$.  Thus, we may conclude that Claim~\ref{claim:regularity} holds for $l+1$.
 
\end{proof}

\subsection{Proof of Theorem~\ref{thm:structure}}
\label{sec:regularityproof}

After the work done in Sections~\ref{s:MCS1}, \ref{s:AFormAndAugs}, and \ref{s:MCS2} we are finally in a position to prove Theorem~\ref{thm:structure}.

\begin{proof}[Proof of Theorem~\ref{thm:structure}]
Let $\ruling$ be an $m$-graded normal ruling of $\front$. By Theorem~\ref{thm:diskEq}, $(\F^{\times})^{j(\rho) + c(L)} \times \F^{r(\rho)}$ and $Z_{\ruling} \subset (\field^{\times})^{c(\Leg)} \times (\field^{\times})^{n(\ruling)} \times \field^{r(\ruling)}$ are isomorphic, as algebraic sets, where $n(\ruling)$ is the number of switches of $\ruling$. Therefore, it suffices to prove that there exists an injective regular map $\varphi_{\ruling} : Z_{\ruling} \hookrightarrow V_m(\front; \field)$ and that $V_m(\front; \field)$ is the disjoint union $\coprod_{\ruling} W_{\ruling}$, where $W_{\ruling}$ is the image of $\varphi_{\ruling}$. We define
\begin{equation*}
\varphi_{\ruling} : Z_{\ruling} \to V_m(\front; \field), \mbox{ by } \alpha \mapsto \Omega \circ \Theta^{-1} \circ \Phi \circ \Lambda (\alpha)
\end{equation*}
where

\begin{enumerate}
\item  $\Lambda : Z_{\ruling} \to MCS_m^{\ruling}(\front; \field)$ is the bijection defined in Theorem~\ref{thm:zrho};
\item $\Phi: MCS_m^{SR}(\front; \field) \to MCS_m^A(\front; \field)$ is the bijection defined in Theorem~\ref{thm:SRAbijection}; 
\item $\Theta^{-1}: MCS^A_m(\front; \field) \to \overline{Aug}_m(\front; \field)$ is the bijection defined in Theorem~\ref{thm:AugAformbijection}; and
\item $\Omega : \overline{Aug}_m(\front; \field) \to V_m(\front; \field)$ is the bijection defined in the discussion after Definition~\ref{defn:aug}.
\end{enumerate}

All four maps in the definition of $\varphi_{\ruling}$ are bijections between their respective domains and codomains. Thus, $\varphi_{\ruling}$ is an injection. In addition, $MCS_m^{SR}(\front; \field)$ is the disjoint union $\coprod_{\ruling} MCS_m^{\ruling}(\front; \field)$, where the union is over all $m$-graded normal rulings of $\front$. Therefore, $V_m(\front; \field)$ is, in fact, the disjoint union $\coprod_{\ruling} W_{\ruling}$, where $W_{\ruling}$ is the image of $\varphi_{\ruling}$. It remains to show that $\varphi_{\ruling}$ is a regular map.

Recall that $Z_{\ruling}$ is an algebraic set in $(\field^{\times})^{c(\Leg)} \times (\field^{\times})^{n(\ruling)} \times \field^{r(\ruling)}$ and $V_m(\front; \field)$ is an algebraic set in $(\field^{\times})^{c(\Leg)} \times \field^N$, where $N$ is the number of crossings and right cusps in $\front$. We must show that there exist Laurent polynomials $$f_1^{\ruling}, \hdots, f_{c(\Leg)+N}^{\ruling} \in \field [t_1^{\pm1}, \hdots, t_{c(\Leg)}^{\pm1},x_1^{\pm1}, \hdots, x_{n(\ruling)}^{\pm1}, z_1, \hdots, z_{r(\ruling)}]$$ so that for any  $\alpha = (s_1, \hdots, s_{c(\Leg)}, r_1, \hdots, r_{n(\ruling)}, u_1, \hdots, u_{r(\ruling)})$ in $Z_{\ruling} \subset (\field^{\times})^{c(\Leg)} \times (\field^{\times})^{n(\ruling)} \times \field^{r(\ruling)}$ 
\begin{equation*}
\varphi_{\ruling} (\alpha) = (f_1^{\ruling}(\alpha), \hdots, f_{c(\Leg)+N}^{\ruling}(\alpha)).
\end{equation*}

Recall from Theorem~\ref{thm:zrho} that the coefficients of handleslides in the SR-form MCS $\Lambda(\alpha)$ are determined by the entries in $\alpha$, up to a sign that depends only on $\ruling$. In particular, the first handleslide coefficient at the $j^{th}$ switch is $r_j$, except when the switch has Type (S3) in which case it is $-r_j$, the first handleslide coefficient at the $j^{th}$ $m$-graded return, or right cusp if $m=1$, is $u_j$, and the marked point on the $j^{th}$ component of $\Leg$ is assigned $s_j$. For  $\Phi \circ \Lambda(\alpha) \in  MCS^A_m(\front; \field)$, $\Theta^{-1}\circ \Phi \circ \Lambda(\alpha)$ is an augmentation in $\overline{Aug}_m(\front; \field)$, which we denote $\aug$. The augmentation $\aug$ evaluated on a crossing or right cusp generator of the Chekanov-Eliashberg algebra is the coefficient of the corresponding handleslide in $\Phi \circ \Lambda(\alpha$), up to a sign that depends only on the front diagram $\front$. The augmentation $\aug$ evaluated on a marked point generator $t_j$ of the Chekanov-Eliashberg algebra is either the element of $\field$ assigned by $\Phi \circ \Lambda(\MCS)$ to the marked point on the $j^{th}$ component of $\Leg$ or the inverse of this element, depending on the orientation of the cusp. The map $\Omega$ sends $\aug$ to $(\aug(t_1), \hdots, \aug(t_{c(\Leg)}), \aug(q_1), \hdots, \aug(q_{N}))$, where $q_1, \hdots, q_N$ are the generators of the Chekanov-Eliashberg algebra corresponding to the crossings and right cusps of $\front$ and $t_1, \hdots, t_{c(\Leg)}$ are the generators corresponding to the marked points. From these observations about $\Lambda, \Theta^{-1},$ and $\Omega$, we may conclude that $\varphi_{\ruling}$ is a regular map if the coefficients of handleslides in the A-form MCS $\Phi\circ \Lambda(\alpha)$ are given by Laurent polynomials in the coefficients of handleslides in the SR-form MCS $\Lambda(\alpha)$, and the Laurent polynomials depend only on the $m$-graded normal ruling $\ruling$. This is exactly the conclusion of Proposition~\ref{prop:regularity}.
\end{proof}

\addcontentsline{toc}{section}{Bibliography} 
\bibliographystyle{amsplain}
\bibliography{Bibliography}

\end{document}